\documentclass[leqno, 11pt]{amsart}
\usepackage{kotex}

\usepackage{amsfonts}
\usepackage{amssymb}
\usepackage{amsmath}
\usepackage{float}
\usepackage{tabularx}
\usepackage{array}

\newcolumntype{Y}{>{\centering\arraybackslash}X}
\newcolumntype{P}[1]{>{\centering\arraybackslash}m{#1}}

\usepackage{graphicx}
\usepackage{subcaption}
\usepackage{enumitem}
\usepackage{amsthm}
\usepackage{amsbsy}

\usepackage{tikz}
\usepackage{tikz-cd}

\usepackage{soul}
\usepackage[colorlinks=true, citecolor=blue]{hyperref} 
\usepackage{ulem}
\usepackage{marginnote}
\usepackage[margin=1in]{geometry}

\usepackage[export]{adjustbox}

\usepackage{multirow}
\usepackage{makecell}

\usepackage[dvipsnames]{xcolor}
\usepackage{color}

\numberwithin{equation}{section}

\newtheorem{theorem}{Theorem}[section]
\newtheorem{lemma}[theorem]{Lemma}

\usepackage{lmodern}

\title{Local Well-Posedness of a Modified NSCH–Oldroyd System: PINN-Based Numerical Computation}
\author{WOOJEONG KIM\textsuperscript{1} \\ \textsuperscript{1}Department of Mathematics and Institute for Scientific Computing and Applied Mathematics, \\ Indiana University, Bloomington, IN 47405, USA}
\date{}

\begin{document}
\pagestyle{plain} 
\raggedbottom
\raggedbottom

\begin{abstract}
Motivated by thrombus modeling, we study a modified Navier–Stokes–Cahn–Hilliard system and consider PINN-based numerical illustrations for the modified system. To enable the analysis, we introduce a diffusion-enhanced system for the deformation variable while preserving the associated dissipative energy structure. We prove local well-posedness for this new system. We also present PINN-based numerical illustrations for representative thrombus cases and report residual losses and benchmark errors obtained with Metropolis--Hastings sampling based on the energy decay.
\end{abstract}

\maketitle
\noindent\textbf{Mathematics Subject Classification:} 35D35, 35G31, 35Q35, 76T30\\
\textbf{Keywords:} Navier--Stokes--Cahn--Hilliard system, Oldroyd model, local well-posedness, strong solutions, uniqueness, energy dissipation, PINNs.

\section{Introduction}
Recently, fluid--structure interaction (FSI) models have been developed in increasingly detailed forms to describe the interaction between deformable structures and surrounding fluids. Among such models, diffuse-interface Navier--Stokes--Cahn--Hilliard (NSCH) systems have been widely used as continuum models for binary-fluid dynamics \cite{dsb23}. In \cite{maa18}, a fully Eulerian formulation was introduced for the velocity field. Moreover, NSCH thrombus model \cite{zyl20,xxk17} incorporated additional Oldroyd-B-type equations to represent elastic stress. Based on these developments, more detailed NSCH-type systems have been proposed for thrombus dynamics using physical reference data. However, in those works, the simulations are guided by external data, while rigorous mathematical results such as well-posedness or strong-solution theory are not established.
\\
\par
By contrast, the same thrombus model of \cite{ktt22} established local well-posedness and uniqueness of strong solutions for the governing system under suitable initial data. That paper proved a microstructural energy-dissipation property, which provides a useful analytical framework for understanding deformation dynamics. However, from the viewpoint of stabilization and simulation, the governing system in the paper lacks a diffusion term in the deformation variable $F$. This absence makes the system less robust both analytically and computationally, since diffusion terms are technically useful for controlling higher-order terms in the a priori estimates and for simulating stabilized systems.
\\
\par
To address this issue, we modify the system in \cite{ktt22} by adding a small diffusion term to the deformation equation while preserving the underlying physical and analytical stability of the model. A theoretical contribution of this paper is the construction of a modified governing system with an additional diffusion term for the deformation variable $F$. This framework provides a more stable basis for analysis and for future data-assimilation-type applications. The main analytical contribution of this paper is the proof of local well-posedness for the resulting system.
\\
\par
Motivated by this analytical development, we also perform numerical simulations for the coupled NSCH system using physics-informed neural networks (PINNs). As pointed out in \cite{aj25,qh22}, one of the main difficulties in PINN simulation is the accurate resolution of regions in which the solution exhibits sharp gradients. In phase-field models, this difficulty is concentrated near the interface of the phase variable, where rapid spatial variation occurs. Nevertheless, for a governing system of complexity comparable to that in \cite{qh22}, we obtain stable PINN simulations without relying on the reference-data setting used there for bubble dynamics. This serves as a supplementary numerical illustration for the paper.
\\
\par
Therefore, the main objective of this paper is to establish local well-posedness for a diffusion-enhanced NSCH--Oldroyd-type thrombus model and . As a secondary component, we present PINN-based numerical illustrations for representative thrombus cases, including challenging interfacial regimes.
\\
\par
The remainder of this paper is organized as follows. In Section 2, we derive the energy-dissipation law and establish the a priori estimates for the governing system of the proposed model. In particular, the stabilizing effect of the new model is reflected in the additional diffusion term and in the higher regularity obtained for the strong solutions in the main well-posedness theorem. In Section 3, we prove the local well-posedness of the governing system. In Section 4, we present PINN-based numerical illustrations for the modified system. Finally, Section 5 contains concluding remarks.
\\
\par
As orientation-preserving diffeomorphisms, let $x(t, \cdot)$ be a time-dependent family. Also, $X(\cdot, t)$ is the inverse of $x(\cdot, t)$ and thus $X(x, t)$ be the corresponding reference map. The velocity field is denoted as $u(x, t)$ and
\[ u(x, t) = \left.\frac{dx(t, X)}{dt}\right|_{X = X(x,t)}. \]
\\
The deformation gradient is denoted as $F$:
\[ F(x, t) = \left.\frac{\partial x(t, X)}{\partial X}\right|_{X = X(x,t)}. \]
\par
Assume that this $F$ satisfies the following:
\[ \partial_t F - { k (\nu (\phi) \Delta F +2 \nabla \nu (\phi) \cdot \nabla F ) }+ u \cdot \nabla F = \nabla u F, \]
where $[\nabla u]_{ij} = \frac{\partial u^i}{\partial x^j}$. This is written in component-wise as:
\begin{equation}\label{F_elementwise}
F^{ij}_t - { k (\nu (\phi) \Delta F^{ij} +2 \nabla \nu (\phi) \cdot \nabla F^{ij} ) }+ \sum_{k=1}^d u^k\partial_k F^{ij} = \sum_{k=1}^d \partial_k u^i F^{kj}, \quad 1 \leq i,j \leq d.
\end{equation}
\\
In the analytical part, we do not impose the incompressibility constraint $\det F = 1$ for the modified diffusion-enhanced system. In the numerical experiments, however, we enforce $\det F = 1$ throughout the space-time training domain.\\
\par
Let $\Omega \subset \mathbb{R}^d$ be a bounded domain with a sufficiently smooth boundary $\partial\Omega$. On $\Omega \times (0, T)$, we consider the following governing equations:

\begin{equation}\label{governing_system}
\left\{ 
\begin{array}{rcl}
\rho(\frac{\partial u}{\partial t} + u \cdot \nabla u) + \nabla p - \nabla \cdot (\eta(\phi) \nabla u) &=& -\lambda \nabla \cdot (\nabla \phi \otimes \nabla \phi) \\[5pt]
+ \nabla \cdot \left({ \nu(\phi)}(FF^T - I)\right) &- &\dfrac{\eta(\phi)(1-\phi)u}{\kappa(\phi)}, \\[5pt]
\nabla \cdot u &=& 0, \\[5pt]
\frac{\partial F}{\partial t} - { k (\nu (\phi) \Delta F +2 \nabla \nu (\phi) \cdot \nabla F ) }+u \cdot \nabla F &=& \nabla u F, \\[5pt]
\frac{\partial \phi}{\partial t}  + u \cdot \nabla \phi &=& \tau \Delta \mu, \\[5pt]
\mu &=& -\lambda \Delta \phi + \lambda \gamma f'(\phi) + \dfrac{{ \nu'(\phi)}}{2} \text{tr}(FF^T - I),
\end{array}
\right.
\end{equation}
\\
$u, \phi, F$ and $p$ are variables of velocity, phase-field variable, deformation gradient and pressure. The blood region is represented by $\phi = 1$, the mixture of blood and thrombus is denoted as $0 < \phi <1$ and the thrombus region is represented by $\phi = 0$. For the other parameters, $\rho, \eta, \kappa$ are mass density, dynamic viscosity and permeability. The variable $\mu$ denotes the chemical potential. 
\\
The parameters $\gamma, \tau$ and $\lambda$ are positive constants representing the interfacial mobility, relaxation parameter, and mixing energy density, respectively. The function $f$ is denoted as the double-well potential $f(\phi) = \frac{(\phi-1)^2\phi^2}{4h^2}$ with the interfacial thickness $h$.
The governing system is supplemented with the following initial and boundary conditions:
\begin{equation}\label{bdry}
\left\{
\begin{array}{lcl}
    &u = \mathbf{0}, \quad \partial_n \mu = \partial_n \phi = {  \partial_n F^{i,j}} = 0 ~ (1 \leq i, j \leq d)\quad \text{on } \partial\Omega \times (0,T), \\[5pt]
    &u(\cdot,0) = u_0, \quad \phi(\cdot,0) = \phi_0, \quad F(\cdot,0) = F_0 \quad \text{in } \Omega,
\end{array}
\right.
\end{equation}
\\
where $n$ denotes the outward unit normal vector on $\partial\Omega$.
\\
\par
To derive the a priori estimates, we assume that $\eta, \kappa \in C^1$, $\nu \in C^3$ and that for some $\alpha, \beta > 0$,
\begin{equation}\label{visco_diff}
\begin{aligned}
&\alpha \leq \eta(x), { \nu(x)}, \kappa(x) \leq \beta \quad \forall x \in \mathbb{R}.
\end{aligned}
\end{equation}
\par
Note that $|\cdot|$ is written as the space $L^2(\Omega)$ norm and $(\cdot, \cdot)$ is the corresponding inner product. $L^2(\Omega)^d$ and $L^2(\Omega)^{d\times d}$ are written in the same way.
For details of the deformation gradient inner product, if $F = [\xi_{ij}] \in L^2(\Omega)^{d\times d}$ where $1 \leq i,j \leq d$, the inner product is $|F|^2 = (F, F) = \int_{\Omega} \text{tr}(FF^T)dx$.
\\
\par
To specify the solution spaces for our governing system, we introduce the following sets. Solenoidal vector field is defined as $\mathcal{V} = \{ u \in C_0^\infty(\Omega)^d,\ \text{div}\, u = 0 \}$ and the closures of $\mathcal{V}$ in $L^2(\Omega)^d$ and $H_0^1(\Omega)^d$ are $H$ and $V$ respectively. Equivalently, these spaces can be written as follows:
\begin{align*}
H &= \{ u \in L^2(\Omega)^d,\ \text{div}\, u = 0,\ u \cdot n = 0 \text{ on } \partial\Omega \}, \\
V &= \{ u \in H_0^1(\Omega)^d,\ \text{div}\, u = 0 \}.
\end{align*}
\par
Additionally, for Stokes operator $A := -\mathbb{P}\Delta$ where $\mathbb{P}$ is the Helmholtz-Leray orthogonal projection from $L^2(\Omega)^d$ onto $H$, let $D(A) \subset V$ be the domain of the $A$. Then $D(A) = H^2(\Omega)^d \cap V$ as it is well known.
\\
\par
One of our main results is the following theorem, which states strong well-posedness of the solution in \eqref{governing_system} - \eqref{visco_diff}.
\begin{theorem}\label{Thm_local_wp}
For $d=2,3$, let $\Omega \subset \mathbb{R}^d$ be a bounded open set with a sufficiently smooth boundary. We are given $u_0 \in D(A)$, $\phi_0 \in H^5(\Omega)$ such that $\partial_n \phi_0 = 0$, and $F_0 \in H^3(\Omega)^{d\times d}$ such that $\partial_n F_0^{i,j} = 0$, $\partial_n \Delta F_0^{i,j} = 0$ $(1 \leq i,j \leq d)$. Then there exists $0 < T_0 \leq T$ such that (\ref{governing_system})--(\ref{visco_diff}) has a unique solution $(u, F, \phi, p)$ on $[0, T_0]$ such that
\begin{equation*}
\begin{aligned}
    &u \in C([0, T_0]; D(A)) \cap L^2(0, T_0; H^3(\Omega)^d), \quad
    \partial_t u \in C([0, T_0]; H) \cap L^2(0, T_0; D(A)), \\
    &
    F \in C([0, T_0]; H^2(\Omega)^{d\times d}) \cap L^2(0, T_0; H^3(\Omega)^{d\times d}) \cap H^1(0, T_0; { H^2}(\Omega)^{d \times d}), \\
    &
    \phi \in C([0, T_0]; H^3(\Omega)) \cap L^2(0, T_0; H^4(\Omega)) \cap H^1(0, T_0; { H^3}(\Omega)), \\
    &\nabla p \in L^2(0, T_0; H^1(\Omega)).
\end{aligned}
\end{equation*}
\end{theorem}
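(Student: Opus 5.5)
We plan to prove Theorem \ref{Thm_local_wp} by a Galerkin approximation with a priori bounds that are uniform on a short time interval, followed by compactness, passage to the limit, and a separate energy argument for uniqueness.

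\textbf{Galerkin scheme.} We use eigenfunctions of the Stokes operator $A$ for $u$. For $\phi$ and for each component $F^{ij}$ we use eigenfunctions of the Neumann Laplacian. The compatibility conditions $\partial_n\phi_0=0$, $\partial_n F_0^{ij}=0$ and $\partial_n\Delta F_0^{ij}=0$ are exactly what makes the projected initial data converge in the high norms required. Local existence of the finite-dimensional ODE system is standard.

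\textbf{Basic energy law.} We first test the momentum equation with $u$, the $\phi$-equation with $\mu$, and the $F$-equation with $\tfrac12\nu(\phi)F$ (contracted). The coupling terms $\nabla\cdot(\nu(\phi)(FF^T-I))$, $\tfrac{\nu'}{2}\mathrm{tr}(FF^T-I)$ and the capillary term cancel as in \cite{ktt22}. The new term $k(\nu\Delta F+2\nabla\nu\cdot\nabla F)$ is the point of the modification. Paired with $\nu F$ and integrated by parts using $\partial_nF=0$, it produces
\[
-k\int\nu^2|\nabla F|^2
\]
plus lower-order terms controllable by $|\nabla\nu|$. These give the dissipative energy
\[
E=\tfrac\rho2|u|^2+\tfrac\lambda2|\nabla\phi|^2+\lambda\gamma\!\int\! f(\phi)+\tfrac12\!\int\!\nu(\phi)\mathrm{tr}(FF^T-I).
\]

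\textbf{Higher-order estimates.} We then build a functional
\[
Y(t)=|Au|^2+|\partial_tu|^2+\|F\|_{H^2}^2+\|\phi\|_{H^3}^2 .
\]
This is obtained by differentiating the momentum equation in time and testing with $\partial_tu$, testing with $A\partial_t u$ or $Au$, applying $\Delta$ to the $F$-equation and testing with $\Delta^2F$, and testing the Cahn--Hilliard part at the $H^3$ level with $\Delta^3\phi$ (equivalently, treating it as a fourth-order parabolic equation). The dissipation supplies $\|u\|_{H^3}$ and $\|\partial_tu\|_{D(A)}$, $\|F\|_{H^3}$, and $\|\phi\|_{H^4}$ (and $\|\phi\|_{H^5}$ where needed). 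Because $\nu\ge\alpha$, the $F$-equation is now uniformly parabolic, so the coupling $\nabla u F$ and the stress $\nabla\cdot(\nu(FF^T))$ can be absorbed: $\nabla\cdot(\nu FF^T)$ in $H^1$ needs only $F\in H^2$, with $F\in H^3$ available in $L^2_t$. Using Gagliardo--Nirenberg and Agmon inequalities in $d\le3$, we aim at
\[
Y'+D\le C(1+Y)^m,
\]
where $D$ denotes the dissipation. This gives a bound on $[0,T_0]$ for $T_0$ small, depending only on the norms of the data. Time regularity, namely $\partial_tF\in L^2H^2$ and $\partial_t\phi\in L^2H^3$, is then read off from the equations. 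The pressure bound $\nabla p\in L^2H^1$ follows by Stokes regularity once the right-hand side of the momentum equation is in $L^2H^1$.

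\textbf{Main obstacle.} We expect the hardest step to be the $\phi$ estimate. The chemical potential contains $\nu'(\phi)\mathrm{tr}(FF^T)/2$, so $\Delta\mu$ involves second derivatives of $F$ multiplied by powers of $\nabla\phi$. At the $H^3$ level for $\phi$, this requires roughly $\|F\|_{H^3}\|F\|_{H^2}$ together with $\nu\in C^3$. Our first attempt will bound these terms by $\epsilon\|F\|_{H^3}^2+C(Y)$ and absorb the $\epsilon$-part into the $F$-dissipation. This is exactly where the added diffusion $k>0$ is indispensable.

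\textbf{Limit and uniqueness.} Aubin--Lions compactness lets us pass to the limit in the nonlinearities. Continuity in time at the top norms follows from the Lions--Magenes lemma. For uniqueness, we take two solutions, set $\delta u,\delta F,\delta\phi$, and estimate
\[
|\delta u|^2+|\delta F|^2+\|\delta\phi\|_{H^1}^2
\]
(testing the $\phi$-difference with $-\Delta\delta\phi$ or with $\delta\mu$). The strong bounds above control the coefficients, and Gronwall gives $\delta\equiv0$.
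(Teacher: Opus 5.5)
Your overall architecture (Galerkin scheme, a short-time differential inequality, Aubin--Lions, Lions--Magenes, Gronwall on the differences) is the paper's. The gap is that your functional $Y$ is too weak for the regularity class in the theorem, so the step ``time regularity is then read off from the equations'' fails. From $Y$ and its dissipation you get at most $u\in L^\infty D(A)\cap L^2H^3$, $\partial_tu\in L^\infty H\cap L^2V$, $F\in L^\infty H^2\cap L^2H^3$ and $\phi\in L^\infty H^3\cap L^2H^5$.
\begin{itemize}
\item Reading $\partial_tF$ off the $F$-equation needs $\nu(\phi)\Delta F\in L^2H^2$, i.e.\ $F\in L^2H^4$. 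So you only obtain $\partial_tF\in L^2H^1$.
\item Reading $\partial_t\phi$ off $\partial_t\phi=-u\cdot\nabla\phi-\tau\lambda\Delta^2\phi+\tau\lambda\gamma\Delta f'(\phi)+\tfrac{\tau}{2}\Delta\bigl(\nu'(\phi)\mathrm{tr}(FF^T-I)\bigr)$ term by term would need $\phi\in L^2H^7$ and $F\in L^2H^5$ to reach $\partial_t\phi\in L^2H^3$. So you only obtain $\partial_t\phi\in L^2H^1$.
\item $\partial_tu\in L^2(0,T_0;D(A))$ is not in the dissipation of $Y$. Testing the time-differentiated momentum equation with $A\partial_tu$ produces $\tfrac12\tfrac{d}{dt}|\nabla\partial_tu|^2$, which $Y$ does not contain.
\end{itemize}
A symptom of the problem is that $Y(0)$ only involves $\phi_0\in H^3$ and $F_0\in H^2$, so the hypotheses $\phi_0\in H^5$ and $F_0\in H^3$ never enter your argument.

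The missing idea is to put time derivatives into the functional, as the paper does with $\mathcal{Z}$. It contains $|\nabla u_t|^2$, $|\nabla\phi_t|^2$ and $|F_t|^2+\sum_{i,j}|\nabla F_t^{ij}|^2$, alongside the low-order quantities $|\nabla u|^2$, $|\Delta\phi|^2$ and $|\Delta F|^2+\sum_{i,j}|\nabla F^{ij}|^2+|F|^2$. One differentiates the equations in $t$ and tests with $u_t$ and $Au_t$, with $-\Delta\phi_t$, and with $F_t$ and $-\Delta F_t$. The dissipation $\mathcal{M}$ then gives $u_t\in L^2D(A)$, $\phi_t\in L^2H^3$ and $F_t\in L^2H^2$ directly. $\mathcal{Z}(0)$ is computed from the equations at $t=0$, and this is exactly where $\Delta^2\phi_0\in H^1$, $\Delta F_0\in H^1$ and $\Delta(\nu'(\phi_0)\mathrm{tr}(F_0F_0^T))\in H^1$ are used.

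This route also avoids your ``main obstacle''. The coupling $\Delta(\nu'\mathrm{tr}(FF^T))$ is only paired with $\Delta^2\phi$ (needing $F\in H^2$), or, after $\partial_t$, with $\nabla\Delta\phi_t$ (needing $F_t\in H^1$). So no $H^3$-level estimate for $\phi$ is required. Your proposed bound by $\epsilon\|F\|_{H^3}^2+C(Y)$ does not hold as stated anyway. After absorbing $\|\phi\|_{H^5}$, the top-order piece leaves $C\|F\|_{L^\infty}^2\|F\|_{H^3}^2$, whose coefficient is not small. You would have to weight the $F$-estimate by a data-dependent constant and run a continuity argument.

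One caveat if you follow the paper. Bounding $|\nabla u_t(0)|$ requires $\nabla\cdot(\eta(\phi_0)\nabla u_0)\in H^1$, and the paper's estimate of $|u_t(0)|_1$ does use $|u_0|_3$. Under $u_0\in D(A)$ alone, $\partial_tu\in L^2(0,T_0;D(A))$ fails in general: for $u_t+Au=0$, $\int_0^T|Au_t|^2\,dt$ is comparable to $|A^{3/2}u_0|^2$. So this part of the conclusion needs more regularity and compatibility of $u_0$ than the theorem assumes.
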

\section{A Priori Estimates}\label{section_ape}

In this section, we derive the a priori estimates which will be used in the Faedo–Galerkin approximation in the next section. We use $(u, \phi, F)$ instead of using subscript $n$ for the elements in the $n-$dimensional finite subspace of Galerkin scheme which we define in section \ref{Galerkin_def}.
\\
\par
The shorthand $|\cdot|_s$ is written to abbreviate $|\cdot|_{H^s(\Omega)}$, $|\cdot|_{H^s(\Omega)^d}$ or $|\cdot|_{H^s(\Omega)^{d\times d}}$. For dimension $d$, we simplify the processes to show the case of $d =3$ since this will prove $d=2$ case as well.
\\
\par
To estimate derivatives of $u \in V$ or $u \in D(A)$, we use basic properties of $u$. The Stokes operator $A$ is a self-adjoint and positive-definite operator with compact inverse $A^{-1} : H \rightarrow H$. Therefore, there is a positive orthonormal basis associated with eigenvalues 
\begin{equation}\label{Stokes_e_values}
0 < \lambda_1 \leq \lambda_2 \leq \lambda_3 \leq .... 
\end{equation}
For the smallest eigenvalue $\lambda_1$, we have 
\begin{equation}\label{poincare_u}
\begin{aligned}
    &\lambda_1 |u|^2 \leq |\nabla u|^2, \text{\qquad if $u \in V$}
    \\&
    \lambda_1 |\nabla u|^2 \leq |A u|^2, \text{\qquad if $u \in D(A)$}
\end{aligned}    
\end{equation}
Also, since $A$ is the isomorphism of $D(A)$ onto $H$ and $D(A) = H^2(\Omega) \cap V$, we can derive
\begin{equation}\label{u_H2}
\begin{aligned}
    & |u|_2 \leq C|A u|^2, \text{\qquad if $u \in D(A)$}
\end{aligned}    
\end{equation}
\\
\par
With denote the average of $\phi$ over $\Omega$ as
\begin{equation*}
\langle \phi \rangle_{\Omega} := \frac{1}{|\Omega|} \int_{\Omega} \phi dx,
\end{equation*}
integration on $\eqref{governing_system}_4$ with divergence free condition and boundary condition yields
\begin{equation}\label{average}
\langle \phi_t \rangle_{\Omega} = \langle \Delta \phi \rangle_{\Omega} = \langle \Delta \phi_t \rangle_{\Omega} = 0. 
\end{equation}
Therefore, integrating in time yields
\begin{equation}\label{average_const}
|\langle \phi(t) \rangle_{\Omega}| = |\langle \phi_0 \rangle_{\Omega}| := K_0.
\end{equation}
We also use several auxiliary lemmas from \cite{ktt22} directly, which are needed in the Faedo–Galerkin argument.
\\
\par
The following Lemma will be applied to bound $|u|_3$ in estimates in this section (Refer \cite{c61}, \cite{ktt22}).
\begin{lemma}\label{lemma_u}
Suppose $\Omega \subset \mathbf{R}^3$ is an open bounded set with a sufficiently smooth boundary and $(u,p) \in V \times L^2(\Omega)$ is the weak solution of the following problem
\begin{equation}
    \begin{aligned}
        - \nabla \cdot (\eta(\phi) \nabla u) + \nabla p &= f \qquad \text{in } \Omega
        \\
        div u &= 0 \qquad \text{in } \Omega
        \\
        \int_{\Omega} \frac{p}{\eta(\phi)} dx &= 0 \qquad \text{in } \Omega
    \end{aligned}
\end{equation}
where $\eta \in C^2(\mathbf{R})$ is given as \eqref{visco_diff} and $\phi \in H^3(\Omega)$. Then $u \in (H^3 (\Omega)^d  \cap V )$ and the corresponding constant $C > 0$ exists s.t.
\begin{equation}
    |u|_3 + |\frac{p}{\eta (\phi)}|_1 \leq C |f|_1 
    \left(
    1 + (1 + |\phi|_2^2)(|\phi|_2^2 + |\phi|_2^{\frac 12} + |\phi|_3^{\frac 12})
    \right).
\end{equation}
\end{lemma}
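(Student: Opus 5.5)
The lemma is a regularity statement for a variable-coefficient Stokes problem. I would reduce it to the constant-coefficient Stokes regularity theory of Cattabriga \cite{c61}, treating the variation of $\eta(\phi)$ as a perturbation. The plan has three steps.

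\emph{Step 1: divide by $\eta(\phi)$.} Set $q = p/\eta(\phi)$. Since $\nabla\cdot(\eta\nabla u) = \eta\Delta u + \eta'(\phi)\nabla\phi\cdot\nabla u$ and $\nabla p = \eta\nabla q + q\,\eta'(\phi)\nabla\phi$, the system becomes
\[
-\Delta u + \nabla q = g := \frac{1}{\eta(\phi)}\Bigl(f + \eta'(\phi)(\nabla\phi\cdot\nabla)u - q\,\eta'(\phi)\nabla\phi\Bigr), \qquad \operatorname{div} u = 0,
\]
with $u = 0$ on $\partial\Omega$ and $\int_\Omega q\,dx = 0$. This normalization is exactly why the lemma imposes $\int_\Omega p/\eta(\phi)\,dx = 0$.

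\emph{Step 2: lower-order energy bounds.} Test the original equation with $u$; by \eqref{visco_diff} and Poincar\'e \eqref{poincare_u} this gives $|\nabla u| \le C|f|$. For $q$, use the Ne\v{c}as/Bogovskii inequality $|q| \le C|\nabla q|_{-1}$ on mean-zero functions. Express $\nabla q$ through $\nabla p = f + \nabla\cdot(\eta\nabla u)$, which gives $|q| \le C|f|(1+|\phi|_2)$ using $H^2 \subset L^\infty$ in $d \le 3$.

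\emph{Step 3: bootstrap the regularity.} Cattabriga's $H^2$ estimate gives $|u|_2 + |q|_1 \le C|g|$. Bound $|g|$ in $L^2$ by Hölder and the Sobolev embeddings $H^1 \subset L^6$ and $H^2 \subset L^\infty$:
\[
|(\nabla\phi\cdot\nabla)u| \le C|\nabla\phi|_{L^6}|\nabla u|_{L^3}, \qquad |q\nabla\phi| \le C|q|_{L^3}|\nabla\phi|_{L^6}.
\]
Then apply the Gagliardo--Nirenberg interpolations $|\nabla u|_{L^3} \le C|\nabla u|^{1/2}|u|_2^{1/2}$ and $|q|_{L^3} \le C|q|^{1/2}|q|_1^{1/2}$, absorb the top-order parts by Young's inequality, and obtain $|u|_2 + |q|_1 \le C|f|(1+|\phi|_2^2)$.

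Next, the $H^3$ Cattabriga estimate gives $|u|_3 + |q|_2 \le C|g|_1$, and this is where the main obstacle lies. Differentiating $g$ produces the terms
\[
\eta''|\nabla\phi|^2\nabla u, \qquad \eta'\nabla^2\phi\,\nabla u, \qquad \eta'\nabla\phi\,\nabla^2 u, \qquad \nabla q\,\nabla\phi, \qquad q\,\nabla^2\phi .
\]
The terms with $\nabla^2\phi$ need $\nabla^2\phi \in L^3$ or $L^4$. Interpolating $|\nabla^2\phi|_{L^4} \le C|\phi|_2^{1/4}|\phi|_3^{3/4}$ (or with a milder exponent) is the source of the $|\phi|_3^{1/2}$ factor in the stated bound. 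I would handle these terms as follows:
\[
|\eta'\nabla^2\phi\,\nabla u| \le C|\nabla^2\phi|_{L^3}|\nabla u|_{L^6}, \qquad |\eta'\nabla^2\phi\,\nabla u|_{L^3}\ \text{estimated via } |\phi|_2^{1/2}|\phi|_3^{1/2},
\]
\[
|q\,\nabla^2\phi| \le |q|_{L^6}|\nabla^2\phi|_{L^3}.
\]
The terms $\nabla^2 u\,\nabla\phi$ and $\nabla q\,\nabla\phi$ are controlled by $|\phi|_2|u|_2$ and $|\phi|_2|q|_1$, since $\nabla\phi \in L^\infty$ requires more than $H^2$ in 3D. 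I would avoid needing that by placing $\nabla\phi \in L^6$ and $\nabla^2 u \in L^3$, interpolating between $H^2$ and $H^3$, and absorbing the result by Young.

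Inserting the Step 2 bounds then gives a right side linear in $|f|_1$, multiplied by $1 + (1+|\phi|_2^2)(|\phi|_2^2 + |\phi|_2^{1/2} + |\phi|_3^{1/2})$, where $\eta \in C^2$ ensures the $\eta''$ factor is bounded. Finally, $|q|_1$ is controlled by $|q|_2$, which yields the estimate for $|p/\eta(\phi)|_1$. The delicate part is the bookkeeping of these interpolation exponents so that the factor is exactly the one stated rather than merely some polynomial in $|\phi|_3$.
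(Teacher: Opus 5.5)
Your reduction --- pass to $q=p/\eta(\phi)$, rewrite the problem as a constant-coefficient Stokes system with right-hand side $g$, apply Cattabriga's $H^2$ and $H^3$ estimates \cite{c61}, and close by interpolation and Young's inequality --- is the standard argument behind the result the paper only cites (Lemma 2.2 of \cite{ktt22}). Its skeleton is sound. Since $\phi\in H^3\subset W^{1,\infty}$, one gets $g\in L^2$ and then $g\in H^1$ qualitatively, so the absorptions are legitimate. But two steps do not deliver what you claim.

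The first is the pressure bound in Step 2. Expressing $\nabla q$ through $\nabla p$ leaves the term $q\,\eta'(\phi)\nabla\phi/\eta(\phi)$. Its $H^{-1}$ norm is only bounded by $C|\phi|_2|q|$, which cannot be absorbed, so you need an $L^2$ bound on the pressure by another route. Moreover, the factor $1+|\phi|_2$ you carry would, after Young, give $|u|_2+|q|_1\le C|f|(1+|\phi|_2^3)$ rather than $C|f|(1+|\phi|_2^2)$. Instead, apply Ne\v{c}as to $p$ itself: with $\bar p$ the mean of $p$, $|p-\bar p|\le C|\nabla p|_{H^{-1}}\le C(|f|+\beta|\nabla u|)\le C|f|$. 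The normalization $\int_\Omega p/\eta(\phi)\,dx=0$ then gives $|\bar p|\,|\Omega|/\beta\le \alpha^{-1}|\Omega|^{1/2}|p-\bar p|$. Hence $|q|\le C|f|$ with no dependence on $\phi$, which is what the factor $1+|\phi|_2^2$ at the $H^2$ level requires.

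The second, more substantive, issue is the final factor. In your bookkeeping $|\phi|_3$ enters only through $|\nabla^2\phi|_{L^3}\le C|\phi|_2^{1/2}|\phi|_3^{1/2}$, in the terms $\nabla^2\phi\,\nabla u$ and $q\,\nabla^2\phi$; the $L^4$ interpolation you mention would give $|\phi|_3^{3/4}$ instead. So what you actually obtain is the factor $1+(1+|\phi|_2^2)(|\phi|_2^2+|\phi|_2^{1/2}|\phi|_3^{1/2})$, with a product. This is the form the paper itself uses when it applies the lemma after \eqref{2_40}.

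It does not imply the inequality as stated, whose factor contains the sum $|\phi|_2^{1/2}+|\phi|_3^{1/2}$. For $|\phi|_2=M$ and $|\phi|_3=M^{10}$, your factor grows like $M^{15/2}$ and the stated one like $M^{7}$. So your closing claim that the factor comes out exactly as stated is unjustified.

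The repair is easy and removes $|\phi|_3$ altogether. Keep $\nabla^2\phi$ in $L^2$ and put $\nabla u$ and $q$ in $L^\infty$ via Agmon's inequality $|v|_{L^\infty}\le C|v|_1^{1/2}|v|_2^{1/2}$ (valid for $d=3$). This gives $|\nabla^2\phi\,\nabla u|+|q\,\nabla^2\phi|\le C|\phi|_2\bigl(|u|_2^{1/2}|u|_3^{1/2}+|q|_1^{1/2}|q|_2^{1/2}\bigr)$, which you absorb by Young exactly as you do for $\nabla\phi\,\nabla^2u$ and $\nabla\phi\,\nabla q$. Together with $|q|\le C|f|$ this yields $|u|_3+|q|_2\le C|f|_1\bigl[1+(1+|\phi|_2^2)|\phi|_2^2\bigr]$, which implies the lemma under either reading of the factor.

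Separately, your sentence claiming $\nabla^2u\,\nabla\phi$ is controlled by $|\phi|_2|u|_2$ is false as written. The $L^6$--$L^3$ interpolation you give right after is the correct treatment.
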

\begin{proof}
    This is derived in \cite{ktt22} Lemma 2.2.
\end{proof}
Also, there is another lemma for bounding the higher degree of $\phi$.
\begin{lemma}\label{lemma_phi}
Suppose $\Omega \subset \mathbf{R}^3$ is an open bounded set with a sufficiently smooth boundary and $f \in L^2(\Omega)$, $g \in H^{\frac 12}(\partial \Omega)$ and $\phi \in H^2(\Omega)$ are the weak solution of the following biharmonic inhomogeneous Neumann boundary value problem
\begin{equation}
    \begin{aligned}
        \Delta^2 \phi &= f \qquad \text{in }\Omega
        \\
        \partial_n \phi = 0, ~ \partial_n \Delta \phi &= g \qquad \text{on } \partial \Omega
    \end{aligned}
\end{equation}
where the following compatibility condition holds for $f$ and $g$ as
\begin{equation}
    \int_{\Omega} f dx = \int_{\partial \Omega} g d\Gamma.
\end{equation}
Then $\phi \in H^4(\Omega)$ and there exists some constant $\tilde{C} > 0$ independent of $\phi$, $f$, $g$ s.t.
\begin{equation}
    \begin{aligned}
        &|\phi|_{H^4(\Omega) / \mathbf{R}}^2 \leq \tilde{C} \left( |f|^2 + |g|_{H^{\frac 12}(\partial \Omega) }^2\right), ~ \text{and}
        \\&
        |\phi|_{H^4(\Omega)}^2 \leq \tilde{C} \left( |f|^2 + |g|_{H^{\frac 12}(\partial \Omega)}^2 + |\int_{\Omega} \phi dx|^2 \right)
    \end{aligned}
\end{equation}
\end{lemma}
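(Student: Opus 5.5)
The statement to prove is Lemma \ref{lemma_phi}, an elliptic regularity result for the biharmonic Neumann problem with $\partial_n\phi=0$ and $\partial_n\Delta\phi=g$. I would reduce it to two second-order Neumann problems for the Laplacian and apply standard $H^{s+2}$ Neumann regularity twice.

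\textbf{Step 1: the intermediate function.} Set $w:=\Delta\phi$. Testing the weak formulation of $\Delta^2\phi=f$ against suitable functions shows that $w$ solves, in the weak sense,
\[
\Delta w = f \quad \text{in } \Omega, \qquad \partial_n w = g \quad \text{on } \partial\Omega .
\]
The compatibility condition $\int_\Omega f\,dx=\int_{\partial\Omega} g\,d\Gamma$ is exactly the solvability condition for this problem. Classical Neumann regularity (Agmon--Douglis--Nirenberg, or Grisvard for smooth domains) then gives $w\in H^2(\Omega)$, with
\[
|w-\langle w\rangle_\Omega|_2 \le C\bigl(|f|+|g|_{H^{1/2}(\partial\Omega)}\bigr).
\]
Since $\partial_n\phi=0$, the divergence theorem gives $\int_\Omega w\,dx=\int_{\partial\Omega}\partial_n\phi\,d\Gamma=0$, so $\langle w\rangle_\Omega=0$. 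Hence the bound above is a full $H^2$ bound on $w$.

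\textbf{Step 2: the function $\phi$ itself.} Now $\phi$ solves
\[
\Delta\phi = w \in H^2(\Omega), \qquad \partial_n\phi = 0 \quad \text{on } \partial\Omega,
\]
with compatible data, since $\int_\Omega w\,dx=0$. Higher-order Neumann regularity on a smooth boundary gives $\phi\in H^4(\Omega)$ and
\[
|\phi-\langle\phi\rangle_\Omega|_4\le C|w|_2 .
\]
Combining this with Step 1 yields the quotient estimate $|\phi|_{H^4/\mathbf{R}}^2\le\tilde C\bigl(|f|^2+|g|_{H^{1/2}(\partial\Omega)}^2\bigr)$.

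\textbf{Step 3: the full norm.} Write $\phi=(\phi-\langle\phi\rangle_\Omega)+\langle\phi\rangle_\Omega$. The constant part contributes $|\langle\phi\rangle_\Omega|\,|\Omega|^{1/2}=|\Omega|^{-1/2}\bigl|\int_\Omega\phi\,dx\bigr|$ to the $H^4$ norm. Adding its square to the quotient estimate gives the second inequality.

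\textbf{Main obstacle.} The delicate point is Step 1: rigorously identifying $w=\Delta\phi$ as the weak solution of the Neumann problem when a priori $\phi$ is only in $H^2$, so that $w$ is only in $L^2$. I would handle this in one of two ways. The first is to define the weak biharmonic formulation by $(\Delta\phi,\Delta\psi)=(f,\psi)-\int_{\partial\Omega}g\psi\,d\Gamma$ for all $\psi\in H^2$ with $\partial_n\psi=0$. Then, for any $\zeta$ in a dense class, I would choose $\psi$ to solve $\Delta\psi=\zeta-\langle\zeta\rangle_\Omega$ with $\partial_n\psi=0$. This shows that $w$ coincides with the $H^1$ weak solution of the Neumann problem from Step 1, up to a constant, and that constant is removed by the zero-mean property. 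The alternative is to cite the general Agmon--Douglis--Nirenberg theory for the complementing boundary system $(\partial_n,\partial_n\Delta)$ for $\Delta^2$ directly.
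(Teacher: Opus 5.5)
Your argument is correct, but it is not the route the paper takes. The paper gives no argument of its own: it cites Lemma 2.1 of \cite{ktt22} and points to Agmon--Douglis--Nirenberg \cite{adn59}. That points to treating $\Delta^2$ with the boundary pair $(\partial_n,\partial_n\Delta)$ as a single fourth-order elliptic problem satisfying the complementing condition, which is essentially the ``alternative'' you mention at the end.

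Your factorization through $w=\Delta\phi$ gives a self-contained proof that needs only classical second-order Neumann regularity. It also makes two things transparent. The compatibility condition is exactly the solvability condition for $\Delta w=f$, $\partial_n w=g$. No free constant survives at the $w$-level, because $\int_\Omega\Delta\phi\,dx=\int_{\partial\Omega}\partial_n\phi\,d\Gamma=0$.

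In addition, you explicitly handle the lift from a weak $H^2$ solution to $H^4$, which a bare appeal to ADN a priori estimates passes over. Your identification step works for every $\zeta\in L^2(\Omega)$ directly, with no density argument needed. The function $\psi$ solving $\Delta\psi=\zeta-\langle\zeta\rangle_\Omega$, $\partial_n\psi=0$ lies in $H^2$, so $(\tilde w,\Delta\psi)=-(\nabla\tilde w,\nabla\psi)$ is legitimate for the $H^1$ Neumann solution $\tilde w$. This gives $w-\tilde w\equiv\text{const}$.

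What the ADN approach buys in exchange is generality. It applies unchanged to other complementing boundary systems for $\Delta^2$, where no splitting into two Laplace problems is available. Your splitting relies on the specific structure $\partial_n\phi=0$, $\partial_n\Delta\phi=g$.
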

\begin{proof}
    This is derived in \cite{ktt22} Lemma 2.1.(Refer \cite{adn59})
\end{proof}
\qquad \\
Similar to Lemma \ref{lemma_phi}, note that if $\phi \in H^2(\Omega)$ is such that $\partial_n \phi = 0$ on $\partial\Omega$, then, for some constant $C > 0$,
\begin{equation}\label{H2_phi}
|\phi|_{H^2(\Omega)} \leq C( |\Delta \phi| + \left| \int_{\Omega} \phi dx \right|).
\end{equation}
\qquad \\
\par
Moreover, based on Neumann conditions and mean zero conditions on $\phi$ and $F$, we obtain several simple inequalities by using generalized poincare theorem.\\
From Neumann boundary conditions on $\phi_t$ in \eqref{bdry} with $\langle \phi_t \rangle_{\Omega} = 0$ as \eqref{average},
\begin{equation}\label{phit_H1}
    |\phi_t|_1 \leq C |\nabla \phi_t| 
\end{equation}
by on the generalized Poincare inequality.
With the Neumann condition on $F$ in \eqref{bdry},
\begin{equation}\label{DelF_H1}
\begin{aligned}
&|\Delta F^{i,j}|_1 \leq |\nabla \Delta F^{i,j}| + |\int \Delta F^{i,j} dx| = |\nabla \Delta F^{i,j}|.
\\&\text{and in the same way,}
\\&
|\Delta F_t^{i,j}|_1 \leq |\nabla \Delta F_t^{i,j}| + |\int \Delta F_t^{i,j} dx| = |\nabla \Delta F_t^{i,j}|
\end{aligned}
\end{equation}
for $1 \leq i, j \leq d$.\\
Also, with the Neumann boundary condition on $\phi$ in \eqref{bdry}, the previous $H^2$ norm bound \eqref{H2_phi} yields 
\begin{equation}\label{gradphi_H1}
\begin{aligned}
&|\nabla \phi|_1 = |\nabla (\phi - \int_{\Omega} \phi dx)|_1 
 \leq | (\phi - \int_{\Omega} \phi dx)|_2
 \leq | \Delta \phi |.
\end{aligned}
\end{equation}
Also, from \eqref{H2_phi} and Neumann boundary condition on $\phi_t$ in \eqref{average},
\begin{equation}\label{gradphit_H1}
\begin{aligned}
&|\nabla \phi_t|_1  \leq | \phi_t |_2
 \leq | \Delta \phi_t |.
\end{aligned}
\end{equation}
Similarly, from the generalized Poincare inequality and Neumann boundary condition on $\phi$ in \eqref{bdry},
\begin{equation}\label{Delphi_H1}
\begin{aligned}
&|\Delta \phi|_1  \leq C | \nabla \Delta \phi| + |\int_{\Omega} \Delta \phi dx|
 \leq C | \nabla \Delta \phi|.
\end{aligned}
\end{equation}
Likewise, from the Neumann boundary condition on $\phi_t$ in \eqref{bdry} and generalized Poincare theorem,
\begin{equation}\label{Delphit_H1}
\begin{aligned}
&|\Delta \phi_t|_1  \leq C | \nabla \Delta \phi_t|.
\end{aligned}
\end{equation}
\qquad
\\
\par
We next introduce several simple estimates in dual Sobolev spaces. To estimate the functionals having negative Sobolev space norm, we consider the several simple inequalities.
\\
For $\Xi \in L^2(\Omega)^{d \times d}$, we define the negative norm of the dual space $H^{-1}(\Omega)$ as 
\begin{equation}\label{def_neg_sobol}
    |\Xi|_{H^{-1}( \Omega)} = sup_{\omega \in {H_0^{1}}^{d \times d}( \Omega)}  \frac{(  \Xi , \omega ) }{|\omega|_1} 
\end{equation}
This definition follows \cite{ErnGuermond2004}.
Then, for given $f \in L^2(\Omega)$ and for any $w \in H_0^{1}(\Omega)$,
\begin{equation*}
    \begin{aligned}
        &(\nabla g, w) = -(g, \nabla w) \leq |g||w| 
        \leq C |g||w|_1,
    \end{aligned}
\end{equation*}
which implies
\begin{equation}\label{H-1norm_div}
    |\nabla g|_{H^{-1}(\Omega)} \leq C |g|.
\end{equation}
In the same way, we get 
\begin{equation}\label{H-1norm_Del}
    \begin{aligned}
        |\Delta g|_{H^{-1}(\Omega)} \leq C |\nabla g|.
    \end{aligned}
\end{equation}
\\
\par
Additionally, there is lemma on adjusting the degree of derivative of the functionals in the dual space of the Sobolev spaces.
\begin{lemma}\label{lem_neg_sobolev}
Let $\Omega$ be a $d-$dimensional bounded domain with sufficiently smooth boundary. If $F \in H^2(\Omega)^{d \times d}$, then it satisfies the following inequality.
\begin{equation}
    |\partial_n F^{ij}|_{H^{-1/2}(\partial \Omega)} \leq C_d(|F|_{H^{1}} + |\Delta F|_{H^{-1}})
\end{equation}
\begin{proof}
From definition of the negative order space as the dual of the Sobolev space, we can compute the following. From the definition of the dual space for sobolev spaces as in \cite{ErnGuermond2004},
    \begin{equation}\label{def_neg_sobolev}
        \begin{aligned}
            &|\partial_n F^{ij}|_{H^{-1/2}(\partial \Omega)} = sup_{g \in H_0^{1/2}(\partial \Omega)} \{ \frac{\int_{\partial \Omega} \partial_n F g d\Gamma }{|g|_{H^{1/2}(\partial \Omega)}}\} = sup_{g \in H_0^{1/2}(\partial \Omega)} \{ \frac{\int_{\partial \Omega}  \partial_n F g d\Gamma }{|g|_{H^{1/2}(\partial \Omega)}}\}
        \end{aligned}
    \end{equation}
We want to discuss the bound with norm on the whole domain $\Omega$ for this inequality. From the generalized trace theorem as proved in Theorem 5 (i) in \cite{m87}, we know that there is trace operator $\gamma : H^1(\Omega) \rightarrow H^{1/2} (\partial \Omega)$  which is surjective and bounded. Furthermore, from Theorem 5 (ii) in \cite{m87}, there is bounded linear right inverse $\epsilon$ s.t. $\epsilon(g) = \tilde{g}$ for any $g \in H^{1/2} (\partial \Omega)$ when $\tilde{g} \in H^{1} ( \Omega) $ satisfies $\gamma (\tilde{g}) = g$. From the boundedness of this right inverse $\epsilon$ of $\gamma$, there is constant $\hat{C}$ s.t. $| \tilde{g} |_{H^{1} ( \Omega)} \leq \hat{C}|\gamma(\tilde{g}) |_{H^{1/2} (\partial \Omega)}$ for any $\tilde{g} \in H^{1} ( \Omega)$.
\\
\par
Now we can continue the norm inequality from \eqref{def_neg_sobolev} as
    \begin{equation}
        \begin{aligned}
            & sup_{g \in H_0^{1/2}(\partial \Omega)} \{ \frac{\int_{\partial \Omega}  \partial_n F^{ij} g d\Gamma }{|g|_{H^{1/2}(\partial \Omega)}}\}
            \\&
            \leq C sup_{\epsilon (g) \in H^{1} ( \Omega)} \{ \frac{\int_{ \Omega}  \nabla \cdot (\nabla F^{ij}  \epsilon (g)) dx}{|\epsilon (g)|_{H^{1}( \Omega)}}\} 
            = C { sup_{\tilde{g} \in H^{1}} ( \Omega)} \{ \frac{\int_{ \Omega}  \Delta F^{ij}  \tilde{g} + \nabla F^{ij} \cdot \nabla \tilde{g} dx}{|\tilde{g}|_{H^{1}( \Omega)}}\}
            \\&
            \leq C sup_{ |\tilde{g}|_1 \leq 1 } \{ \frac{\int_{ \Omega}  \Delta F^{ij}  \tilde{g} + \nabla F^{ij} \cdot \nabla \tilde{g} dx}{|\tilde{g}|_{H^{1}( \Omega)}}\}
            \leq 
            C(|\Delta F^{ij}|_{H^{-1}} +|F^{ij}|_{H^{1}} ).
        \end{aligned}
    \end{equation}
\end{proof}
\end{lemma}
\subsection{Energy estimate}
Now we begin energy estimate for the system \eqref{governing_system} by testing equation $\eqref{governing_system}_1$ with $u$,
\begin{equation}\label{1st_test}
\begin{aligned}
\frac{1}{2} \frac{d}{dt} \int_{\Omega} |u|^2 dx + \int_{\Omega} \eta(\phi) |\nabla u|^2 dx &= - (\lambda \nabla \cdot (\nabla \phi \otimes \nabla \phi), u) \\
&\quad + (\nabla \cdot ({  \nu(\phi)}(FF^T - I)), u) - \left( \frac{\eta(\phi)}{\kappa(\phi)} (1 - \phi) u, u \right).
\end{aligned}
\end{equation}
{For the first term in the} right-hand side of $\eqref{governing_system}_1$, it can be expanded as
\begin{equation}\label{2_17}
\begin{aligned}
- \lambda \nabla \cdot (\nabla \phi \otimes \nabla \phi) &= -\lambda \Delta \phi \nabla \phi - \lambda \nabla \left( \frac{1}{2} |\nabla \phi|^2 \right).
\end{aligned}
\end{equation}
And $\eqref{governing_system}_5$ further gives
\begin{equation}\label{2_18}
= \mu \nabla \phi -\lambda \gamma f'(\phi) \nabla \phi - { \frac{ 1}{2} \nu'(\phi) }\text{tr}(FF^T - I) \nabla \phi  - \lambda \nabla \left( \frac{1}{2} |\nabla \phi|^2 \right).
\end{equation}
To simplify the divergence of the matrix form, if matrix $\Xi = (b)_{ij} ~(i,j = 1, 2, 3)~$ and vector $u = (u^1, u^2, u^3)$ are given,
\begin{equation}\label{eqn:mat}
    \begin{split}
        \int_\Omega div(\Xi ) u dx &=  \int_\Omega (\partial_1, \partial_2, \partial_3) 
	\begin{pmatrix} 
	b_{11} & b_{12} & b_{13} \\
	b_{21} & b_{22} & b_{23}\\
	b_{31} & b_{32} & b_{33} \\
	\end{pmatrix}
	\begin{pmatrix} 
	u^1 \\
	u^2 \\
	u^3 \\
	\end{pmatrix}
	 dx
    \\ ~~    &= 
     - \int_{\Omega}
     tr \left[
 	{ \begin{pmatrix} 
	\partial_1 u^1 & \partial_2 u^1 & \partial_3 u^1 \\
	\partial_1 u^2 & \partial_2 u^2 & \partial_3 u^2 \\
	\partial_1 u^3 & \partial_2 u^3 & \partial_3 u^3 \\
	\end{pmatrix}}
  	\begin{pmatrix} 
	b_{11} & b_{12} & b_{13} \\
	b_{21} & b_{22} & b_{23}\\
	b_{31} & b_{32} & b_{33} \\
	\end{pmatrix}
    \right]
    \\ ~~   &= - \int_\Omega  tr (\nabla u \cdot \Xi) dx
            .\\
    \end{split}
\end{equation}


By taking this form on a term in the equation \eqref{1st_test},
\begin{align*}
\int_{\Omega}  \text{div}({  \nu(\phi)}(FF^T - I))  u \, dx &= - \int_{\Omega} \text{tr}(\nabla u \cdot (FF^T - I){  \nu(\phi)}) \, dx \\
&= - \int_{\Omega} \text{tr}(\nabla u FF^T){  \nu(\phi)} \, dx +  \int_{\Omega} \text{tr}(\nabla u){  \nu(\phi)} \ dx.
\end{align*}

Using $\text{tr}(\nabla u) = \text{div} \, u = 0$ yields
\[ = - \int_{\Omega} \text{tr}(\nabla u FF^T){  \nu(\phi)} \, dx. \]

Therefore, we obtain
\begin{equation}\label{2_19}
\begin{aligned}
\frac{1}{2} \frac{d}{dt} &\int_{\Omega} |u|^2 dx + \int_{\Omega} \eta(\phi) |\nabla u|^2 dx 
\\
&= \int_{\Omega} \mu \nabla \phi \cdot u dx  - \lambda \gamma \int_{\Omega} f'(\phi) \nabla \phi \cdot u dx - \frac{1}{2} \int_{\Omega} {  \nu'(\phi)} \nabla \phi \text{tr}(FF^T-I) \cdot u \, dx  
\\
&  - \lambda \int_{\Omega} \nabla  \left( \frac{1}{2} |\nabla \phi|^2 \right)\cdot u dx -  \int_{\Omega} \text{tr}(\nabla u FF^T){  \nu(\phi)} \, dx - \int_{\Omega} \frac{\eta(\phi)}{\kappa(\phi)} (1 - \phi) |u|^2 \, dx
\\
&= \int_{\Omega} \mu \nabla \phi \cdot u dx  - \frac{1}{2} \int_{\Omega} {  \nu'(\phi)} \nabla \phi \text{tr}(FF^T-I) \cdot u \, dx  
\\
&-  \int_{\Omega} \text{tr}(\nabla u FF^T){  \nu(\phi)} \, dx - \int_{\Omega} \frac{\eta(\phi)}{\kappa(\phi)} (1 - \phi) |u|^2 \, dx.
\end{aligned}
\end{equation}
For the last equality in above equation, we used divergence-free condition.
\\
\par
For the next estimate, we test $\eqref{governing_system}_4$ with $\mu$ as
\begin{equation}\label{2_20}
\left( \phi_t, \mu \right) + (u \cdot \nabla \phi, \mu) + \tau |\nabla \mu|^2 = 0.
\end{equation}

To expand this form by the expression of $\mu$ in $\eqref{governing_system}_5$,
\begin{equation}\label{2_21}
\begin{aligned}
(u \cdot \nabla \phi, \mu) &= - \left( \phi_t, \mu \right) -\tau |\nabla \mu|^2 
\\
&= \int_{\Omega} \left( \lambda \Delta \phi - \lambda \gamma f'(\phi) - \frac{1}{2} {  \nu'(\phi)}\text{tr}(FF^T - I) \right) \phi_t \, dx  -\tau |\nabla \mu|^2.
\end{aligned}
\end{equation}
by using $\langle \phi_t \rangle_{\Omega} = 0$ in \eqref{average}.

Equalities \eqref{2_19}-\eqref{2_21} are combined as
\begin{equation}\label{2_22}
\begin{aligned}
&\frac{1}{2} \frac{d}{dt} \int_{\Omega} \left( |u|^2 + \lambda |\nabla \phi|^2 + 2 \lambda \gamma f(\phi) \right) dx + \int_{\Omega} (\eta(\phi) |\nabla u|^2 + \tau |\nabla \mu|^2) dx 
\\
&= - \frac{1}{2}  \int_{\Omega} {  \nu'(\phi)}\frac{d \phi}{dt} \text{tr}(FF^T - I) dx   - \frac{1}{2} \int_{\Omega} {  \nu'(\phi)} \nabla \phi \text{tr}(FF^T-I) \cdot u \, dx  
\\
& - \int_{\Omega} \text{tr}(\nabla u FF^T){  \nu (\phi)} dx - \int_{\Omega} \frac{\eta(\phi)}{\kappa(\phi)} (1 - \phi) |u|^2 dx.
\end{aligned}
\end{equation}
\\
\par
As the next test, take innerproduct between $\eqref{governing_system}_3$ and $F{ \nu(\phi)}$. This means that $\eqref{governing_system}_3$ is multiplied by $F^T{ \nu(\phi)}$, and then we take the trace and integrate this over $\Omega$.
\begin{equation}\label{2_23}
\begin{aligned}
\int_{\Omega} & \text{tr} \left( F_t F^T \right) \nu(\phi)dx + \int_{\Omega} \text{tr}((u \cdot \nabla F)F^T) \nu(\phi)dx 
\\
&- k \int_{\Omega}tr(({  \nu (\phi) \Delta F +2 \nabla \nu (\phi) \cdot \nabla F ) F^T \nu(\phi)} ) dx = \int_{\Omega} \text{tr}(\nabla u FF^T){ \nu(\phi)} dx.
\end{aligned}
\end{equation}
Let us estimate each term. The first term is
\begin{equation}\label{2_24}
\text{tr} \left( F_t F^T \right) = \frac{1}{2} \frac{d}{dt} \text{tr}(FF^T).
\end{equation}
To other term is written as
\[ \text{tr}((u \cdot \nabla F)F^T) = \frac{1}{2} u \cdot \nabla \text{tr}(FF^T) \]
and this gives
\begin{align}\label{2_25}
\int_{\Omega} \text{tr}((u \cdot \nabla F)F^T){ \nu(\phi)}  dx &= \frac{1}{2} \int_{\Omega} u \cdot \nabla \text{tr}(FF^T){ \nu(\phi)}  dx 
\end{align}
For the other term, let us observe $(i,j)-$ component of $F$ where $1 \leq i, j \leq d$ as \eqref{F_elementwise}. Then integration by part and \eqref{bdry} yields
\begin{align}\label{k_estimate}
- k \int_{\Omega}tr(({  \nu (\phi) \Delta F +2 \nabla \nu (\phi) \cdot \nabla F ) F^T \nu(\phi)} ) dx &=  k \int_{\Omega}{  \nu (\phi)} \sum_{i,j} (\nabla F^{i,j})^2 {  \nu(\phi)}  dx 
\end{align}
Using \eqref{2_24} - \eqref{k_estimate} in \eqref{2_23} gives,
\begin{equation}\label{2_26}
\begin{aligned}
&\frac{1}{2} \int_{\Omega} \left( \frac{\partial}{\partial t} \text{tr}(FF^T) \right){  \nu(\phi)} dx + \frac{1}{2} \int_{\Omega} u \cdot \nabla \text{tr}(FF^T) {  \nu(\phi)}  dx 
\\
&+ k \int_{\Omega}{  \nu (\phi)} \sum_{i,j} (\nabla F^{i,j})^2 {  \nu(\phi)}  dx 
= \int_{\Omega} \text{tr}(\nabla u FF^T){  \nu(\phi)} dx.
\end{aligned}
\end{equation}
In \eqref{2_22}, substituting the right-hand side of the equation \eqref{2_26} gives
\begin{align*}
&\frac{1}{2} \frac{d}{dt} \int_{\Omega} (|u|^2 + \lambda |\nabla \phi|^2 + 2 \lambda \gamma f(\phi)) dx + \int_{\Omega} (\eta(\phi) |\nabla u|^2 + \tau |\nabla \mu|^2) dx \\
&= -\frac{1}{2} \int_{\Omega} {  \nu'(\phi)} \phi_t \text{tr}(FF^T) dx   - \frac{1}{2} \int_{\Omega} {  \nu'(\phi)} \nabla \phi \text{tr}(FF^T-I) \cdot u \, dx   \\
&- \frac{{1}}{2} \int_{\Omega} \left( \frac{\partial}{\partial t} \text{tr}(FF^T-I) \right) {  \nu(\phi)} dx - \frac{1}{2} \int_{\Omega} u \cdot \nabla \text{tr}(FF^T) {  \nu(\phi)}  dx 
\\
&- k \int_{\Omega}{  \nu (\phi) \sum_{i,j} (\nabla F^{i,j})^2  \nu(\phi)}  dx - \int_{\Omega} \frac{\eta(\phi)}{\kappa(\phi)} (1 - \phi) |u|^2 dx.
\end{align*}
Hence, the following energy-dissipation identity holds:
\begin{align}
&\frac{1}{2} \frac{d}{dt} \int_{\Omega} |u|^2 + \lambda |\nabla \phi|^2 + 2 \lambda \gamma f(\phi)  + {  \nu(\phi)} tr(FF^T -I ) dx + \int_{\Omega} (\eta(\phi) |\nabla u|^2 + \tau |\nabla \mu|^2) dx
\\
&  =
-k \int_{\Omega}{  \nu^2(\phi) \sum_{i,j} (\nabla F^{i,j})^2 } dx - \int_{\Omega} \frac{\eta(\phi)}{\kappa(\phi)} (1 - \phi) u^2 dx.
\end{align}
{ This is equal to
\begin{equation}\label{2_27}
\begin{aligned}
&\frac{1}{2} \frac{d}{dt} \int_{\Omega} |u|^2 + \lambda |\nabla \phi|^2 + 2 \lambda \gamma f(\phi)  + { \nu(\phi)} tr(FF^T -I ) dx 
\\
&  =
- \int_{\Omega} (\eta(\phi) |\nabla u|^2 + \tau |\nabla \mu|^2) dx-k \int_{\Omega}{  \nu^2 (\phi)\sum_{i,j} (\nabla F^{i,j})^2 } dx - \int_{\Omega} \frac{\eta(\phi)}{\kappa(\phi)} (1 - \phi) u^2 dx 
\end{aligned}
\end{equation}
Note that we derived dissipative energy which is on left hand side with non-positive parts on the right hand side. We define the total energy by
\begin{equation}\label{Total_Energy}
    E(x,t) :=  \int_{\Omega} |u|^2 + \lambda |\nabla \phi|^2 + 2 \lambda \gamma f(\phi)  + { \nu(\phi)} tr(FF^T -I ) dx.
\end{equation}
We can decompose this total energy as kinetic energy $E_k(x,t)$, mixed energy $E_m(x,t)$ and elastic energy $E_e(x,t)$ as follows.
\begin{equation}\label{E_decom}
\begin{aligned}
    &E_k(x,t) :=  \int_{\Omega} |u|^2  dx,
    \\& 
    E_m(x,t) :=  \int_{\Omega} \lambda |\nabla \phi|^2 + 2 \lambda \gamma f(\phi) dx,
    \\& 
    E_e(x,t) :=  \int_{\Omega} { \nu(\phi)} tr(FF^T -I ) dx.
\end{aligned}
\end{equation}
\subsection{A priori estimate for well-posedness property}\label{Galerkin_def}
The previous energy estimate is not enough to discuss the well-posedness of the solution since we need additional functional regularity for $\partial_t$ of the solution variables $u, \phi, F$ to apply the Aubin-Lions compactness theorem as standard method.
Observe that \eqref{2_27} does not provide a closed estimate.
\\
\par
Before deriving higher-order estimates, we introduce the finite-dimensional spaces used in the standard Faedo–Galerkin approximation for the system \eqref{governing_system}. This will be estimate on the Faedo-Galerking scheme as standard method. Therefore, let us define orthonormal basis sets $(w_k)_{k=1}^{\infty}$ of $H$ which are the eigenvectors of the Stokes operator $A$, $(e_k)_{k=1}^{\infty}$ of $L^2(\Omega)^d$ which are the Neumann eigenvalues of the operator $-\Delta + I$ and $(M_n)_{n=1}^{\infty}$ of $L^2(\Omega)^{d \times d}$ which are the Neumann eigenfunctions of the Laplace operator.
\\
\par
Then, we can construct the $n$-dimensional subspaces of $H$, $L^2(\Omega)^d$ and $L^2(\Omega)^{d \times d}$ where $n \geq 1$.
\[
V_n^1 := \text{span}\{w_1, ..., w_n\}, \qquad
V_n^2 := \text{span}\{e_1, ..., e_n\}, \qquad
V_n^3 := \text{span}\{M_1, ..., M_n\}.
\]
Let $\mathbf{P}_n^1 : H \to V_n^1$, $\mathbf{P}_n^2 : L^2(\Omega) \to V_n^2$ and $\mathbf{P}_n^3 : L^2(\Omega)^{d \times d} \to V_n^3$ be the projections associated with $L^2$ inner product. In this section, we assume all the variables $(u, \phi, F)$ are in these corresponding $n$-dimensional subspaces abbreviating the subscript of $n$.
\\
\par
To begin higher-order estimates for $u, \phi, F, u_t, \phi_t$ and $F_t$, let us denote the following.
\begin{equation}\label{Z}
\mathcal{Z} := |\nabla u|^2 + |\nabla u_t|^2 + |\Delta \phi|^2 + |\nabla \phi_t|^2 + |\Delta F|^2+\sum_{i,j}|\nabla F^{i,j}|^2+| F|^2+\sum_{i,j}|\nabla F_t^{i,j}|^2+ |F_t|^2
\end{equation}

Next, we test $\eqref{governing_system}_1$ with $Au$.
\begin{equation}\label{u_estimate}
\begin{aligned}
&\frac{1}{2} \frac{d}{dt} \int_{\Omega} |\nabla u|^2 dx + (u \cdot \nabla u, Au) - (\text{div}(\eta(\phi) \nabla u), Au)
\\ &= - (\lambda \Delta \phi \nabla \phi, Au) 
 +  (\text{div}({  \nu(\phi)}(FF^T - I)), Au) - \left( \frac{\eta(\phi)}{\kappa(\phi)} (1 - \phi) u, Au \right).
\end{aligned}
\end{equation}
By using \eqref{2_17}, we rewrite the first term on the right hand side.
\\
\par
From the classical Stokes theory, there is $p^* \in L^2(0,T;H^1(\Omega))$ such that $\langle p^* \rangle_{\Omega} = 0$ and $(u, p^*)$ satisfies the equation
\begin{equation}\label{Au_decomposition}
-\Delta u + \nabla p^* = Au \text{~for~a.e.~} t.
\end{equation}
Moreover, there is $C > 0$ such that
\begin{equation}\label{p*}
|p^*|_1 \leq C |Au|.
\end{equation}
And, from {Proposition 1.2 in \cite{t84}}, we have
\begin{equation}\label{p*_uH1}
|p^*|_{L^2(\Omega)/\mathbb{R}} \leq |\nabla p|_{H^{-1}(\Omega)} \leq |Au|_{H^{-1}(\Omega)} \leq |u|_1.
\end{equation}
\\
\par
Then, we get the following inequality by using above inequalities.
\begin{align*}
- (\text{div}(\eta(\phi) \nabla u), Au) &= - (\eta(\phi) \Delta u, Au) - (\eta'(\phi) \nabla \phi \cdot \nabla u, Au) \\
&= (\eta(\phi) Au, Au) - (\eta(\phi) \nabla p^*, Au) - (\eta'(\phi) \nabla \phi \cdot \nabla u, Au) \\
&\geq \alpha |Au|^2 + (\eta'(\phi) \nabla \phi \, p^*, Au) - (\eta'(\phi) \nabla \phi \cdot \nabla u, Au).
\end{align*}
Sobolev embeddings $H^{1/2}(\Omega) \subset L^3(\Omega), H^1(\Omega) \subset L^6(\Omega)$ enable the following estimates.
\begin{align*}
\left| (\eta'(\phi) \nabla \phi \, p^*, Au) \right| &\leq C|Au| |p^*|^{1/2} |p^*|_1^{1/2} |\phi|_2 \\
&\leq C|Au|^{3/2} |u|_1^{1/2} |\phi|_2 \\
&\leq \frac{\alpha}{12} |Au|^2 + C |u|_1^2 |\phi|_2^4,
\end{align*}
\begin{align*}
\left| (\eta'(\phi) \nabla \phi \cdot \nabla u, Au) \right| &\leq C |\nabla \phi|_1 | u|_1^{1/2} | u|_2^{1/2} |Au| \\
&\leq \frac{\alpha}{12} |Au|^2 + C |\phi|_2^2 | u|_2^2.
\end{align*}
Additionally, from \eqref{Au_decomposition} and \eqref{p*},
\begin{align*}
|(u \cdot \nabla u, Au)| &\leq |u|_{L^6(\Omega)} |\nabla u|_{L^3(\Omega)} |Au| \\
&\leq |u|_1 |u|_1^{1/2} |u|_2^{1/2} |Au| \\
&\leq \frac{\alpha}{12} |Au|^2 + C |u|_1^6.
\end{align*}

Continuously, we estimate the other terms in similar way.
\begin{align*}
|\Delta \phi \nabla \phi, Au| &\leq C |\Delta \phi|_{L^3} |\nabla \phi|_{L^6} |Au| \\
&\leq \frac{\alpha}{12} |Au|^2 + C |\Delta \phi||\Delta \phi|_1 |\nabla \phi|_1^2 \\
&\leq \frac{\alpha}{12} |Au|^2 + \dfrac{\tau \lambda}{8\tilde{C}} |\Delta \phi|_1^2 + C| \phi|_2^6 \\
&\leq \frac{\alpha}{12} |Au|^2 + \frac{\tau \lambda}{8} |\Delta^2 \phi|^2 + C |\partial_n \Delta \phi|_{H^{\frac{1}{2}}(\partial \Omega)}^2 + C |\phi|_2^6.
\end{align*}

The above $\tilde{C} > 0$ comes from Lemma \ref{lemma_phi}. From $\eqref{governing_system}_5$ with the boundary conditions (1.2), we know that
\begin{align*}
\partial_n \mu &= -\lambda \partial_n \Delta \phi + \lambda \gamma f''(\phi) \partial_n \phi + \partial_n (\frac{\nu'(\phi)}{2\lambda} \text{tr}(FF^T)),
\end{align*}

and this will be
\[
\partial_n \Delta \phi =  \partial_n (\frac{\nu'(\phi)}{2\lambda}\text{tr}(FF^T)).
\]

A general {trace theorem in \cite{lm00}} yields
\begin{equation}\label{bdry_Delphi}
\begin{aligned}
&|\partial_n \Delta \phi|_{H^{\frac{1}{2}}(\partial \Omega)} 
=  |\partial_n (\frac{\nu'(\phi)}{2\lambda} \text{tr}(FF^T))|_{H^{\frac{1}{2}}(\partial \Omega)} 
\\
&\leq C |\nu'(\phi)\text{tr}(FF^T))|_2
\\
&\leq C (|\nu'(\phi) \text{tr}(FF^T)|+|\nu''(\phi)\nabla \phi \text{tr}(FF^T)|+|\nu'(\phi) \text{tr}(\sum_{i,j}\nabla F^{i,j}( F^{i,j})^T)|
\\
& +\left|\nu'''(\phi)\|\nabla \phi\|^2 \text{tr}(FF^T)\right| +|\nu''(\phi)\Delta \phi \text{tr}(FF^T)|
+\sum_{i,j}|\nu''(\phi)\nabla \phi \nabla F^{i,j}F^{i,j}|
\\
& +|\nu'(\phi) (\text{tr}(\Delta FF^T)+\sum_{i,j}(\nabla F^{i,j})^2|)
\\
&\leq C (1 + |\nabla \phi| + |\nabla \phi|^2 + |\Delta \phi|)|F|_2^2.
\end{aligned}
\end{equation}
We obtain the last inequality from \eqref{H2_phi} and generalized Poincaré theorem with \eqref{average_const}.
Combining the estimates and then we get
\begin{align*}
|(\Delta \phi \nabla \phi, Au)| &\leq \frac{\alpha}{12} |Au|^2 + \frac{\tau \lambda}{8} |\Delta^2 \phi|^2 + C (1 + |\nabla \phi| + |\nabla \phi|^2 + |\Delta \phi|)^2|F|_2^4+ C |\phi|_2^6.
\end{align*}

From $H^2(\Omega) \subset L^\infty(\Omega)$,
\begin{align*}
&|(\nabla \cdot ({  \nu(\phi)}(FF^T - I)), Au)| \leq |( {  \nu'(\phi)} \nabla \phi (FF^T - I), Au)| + |({  \nu(\phi)}\nabla \cdot (FF^T), Au)| \\
&\leq  |\nu'(\phi)|_{L^{\infty}} |{\nabla \phi}|_{L^{6}}(|F|_{L^{3}}^2 + C)|Au| + |\nu (\phi)|_{L^{\infty}}|\nabla \cdot (FF^T)| |Au|
\\
&\leq  |\nu'(\phi)|_{L^{\infty}} |{\nabla \phi}|_1(|F|_1^2 + C)|Au| + |\nu (\phi)|_{L^{\infty}}|F|_2^2 |Au|
\\
&\leq \frac{\alpha}{12} |Au|^2 + C |\phi|_2^2 + C |\phi|_2^2 |F|_1^4+ C|F|_2^4.
\end{align*}
Likewise,
\begin{align*}
\left| \left( \frac{\eta(\phi)}{\kappa(\phi)} (1 - \phi) u, Au \right) \right| &\lesssim \frac{\beta}{\alpha} |Au| |u| |\phi|_2 \\
&\leq \frac{\alpha}{12} |Au|^2 + C |u|^2 + C |u|^2 |\phi|_2^2.
\end{align*}

By combining all the estimates after applying \eqref{H2_phi}, the \eqref{u_estimate} can be bounded as
\begin{align*}
&\frac{1}{2} \frac{d}{dt} |\nabla u|^2 + \frac{\alpha}{2} |Au|^2 
\\
&\leq \frac{\tau \lambda}{8} |\Delta^2 \phi|^2 + C (1 + |u|_1^2 |\phi|_2^4 + |\phi|_2^2 | u|_2^2 + |u|_1^6+ |F|_2^4( 1+ |\nabla \phi| + |\nabla \phi|^2 + |\Delta \phi|)^2
\\
&+ |\Delta \phi|^6 + |\Delta \phi|^2 + |\Delta \phi|^2 |F|_1^4+ |F|_2^4 + |u|^2 + |u|^2 |\Delta \phi|^2 ).
\end{align*}

Therefore, we can organize the above with $\mathcal{Z}$ as defined in \eqref{Z}
\begin{equation}\label{2_31}
\frac{1}{2} \frac{d}{dt} |\nabla u|^2 + \alpha (1 - \frac{7}{16}) |Au|^2 \leq \frac{\tau \lambda}{12} |\Delta^2 \phi|^2 + C (1 + \mathcal{Z})^3.
\end{equation}

As next estimate, $\eqref{governing_system}_4$ is tested with $\Delta^2 \phi$. Using \eqref{2_18} and $\eqref{governing_system}_5$ brings
\begin{align*}
&\frac{1}{2} \frac{d}{dt} |\Delta \phi|^2 + \int_{\partial \Omega} \phi_t \partial_n \Delta \phi d\Gamma + (u \cdot \nabla \phi, \Delta^2 \phi) + \tau \lambda |\Delta^2 \phi|^2  \\
&=  \left(  \Delta (\frac{\nu'(\phi)}{2}\text{tr}(FF^T - I)), \Delta^2 \phi \right) + \tau \lambda \gamma \left( f''(\phi) \Delta \phi + f'''(\phi) \|\nabla \phi\|^2, \Delta^2 \phi \right).
\end{align*}
For the integration term on the boundary of domain, we use the boundary condition $\partial_n \phi_t = 0$ as written in \eqref{bdry}. Also employing the divergence-free condition of $u$ yields
\begin{align*}
\left| \int_{\partial \Omega} \phi_t \partial_n \Delta \phi d\Gamma \right|
&= \left| \int_{\Omega} \Delta (\phi_t \Delta \phi) dx \right| \\
&= \left| \int_{\Omega} \Delta \phi_t \Delta \phi + 2 \nabla \phi_t \cdot \nabla \Delta \phi + \phi_t \Delta^2 \phi dx \right| \\
&= \left| \int_{\Omega} \nabla \Delta \phi \cdot \nabla \phi_t + \phi_t \Delta^2 \phi dx \right| \\
&\leq C |\nabla \Delta \phi| |\nabla \phi_t| + |\phi_t| |\Delta^2 \phi| \\
&\leq \frac{\tau \lambda}{8} |\Delta^2 \phi|^2 + C (1 + |\nabla \phi|^2 + |\nabla \phi|^4 + |\Delta \phi|^2)|F|_2^ + C |\nabla \phi_t|^2 +C |\phi_t|^2.
\end{align*}
from Lemma \eqref{lemma_phi} and \eqref{bdry_Delphi}.
Additionally, Sobolev embedding is used again as
\begin{align*}
&|(u \cdot \nabla \phi, \Delta^2 \phi)| \leq \frac{\tau \lambda}{8} |\Delta^2 \phi|^2 + C |u|_2^2 |\phi|_2^2,
\\
&
\\
&|(\Delta ({\nu'(\phi)}\text{tr}(FF^T - I)), \Delta^2 \phi)|
\\
&\leq 
|(\Delta (\nu'(\phi)\text{tr}(FF^T )), \Delta^2 \phi)| 
+|(\Delta \nu'(\phi), \Delta^2 \phi)| \\
&\leq 
|
((\nu'''(\phi)\sum_{k}\phi_k^2  + \nu''(\phi)\sum_{k}\phi_{kk} )tr(FF^T), \Delta^2 \phi)
| \\ & 
+\sum_{k}|(
\nu''(\phi)\phi_ktr(2F_kF^T), \Delta^2 \phi
)|\\ & 
+|(
\nu'(\phi)tr(2\Delta FF^T + 2\sum_{k}F_kF_k^T), \Delta^2 \phi
)| + |(\Delta \nu'(\phi), \Delta^2 \phi)|
\\ 
& 
\leq
(|\nu'''(\phi)|_{L^{\infty}}|\nabla \phi| + |\nu''(\phi)|_{L^{\infty}}|\Delta \phi|)|tr(FF^T))|_{L^{\infty}} |\Delta^2 \phi|
 \\ & 
+
C|\nu''(\phi)|_{L^{\infty}}|\nabla \phi|_{L^{6}}|F|_{L^{6}}^2 |\Delta^2 \phi|
\\ & 
+C|(
|\nu'(\phi)|_{L^{\infty}}(|\Delta F||F|_{L^{\infty}} + |\nabla F|_{L^{3}}|\nabla F|_{L^{6}})|\Delta^2 \phi|
)| \\ &
+ |\Delta \nu'(\phi)|| \Delta^2 \phi| \\
&\leq
(|\nabla \phi|_1^2 + |\Delta \phi|)|F|_2^2|\Delta^2 \phi|
+
C|\nabla \phi|_1|F|_1^2 |\Delta^2 \phi|
\\ & 
+C
(|F|_2^2 + |\nabla F|_1|\nabla F|_1)|\Delta^2 \phi|
+ | \nu'(\phi)|_2| \Delta^2 \phi| \\ &
\leq 
\frac{\tau \lambda}{12} |\Delta^2 \phi|^2 +C(|\nabla \phi|_1^4 + |\phi|_2^2)|F|_2^2+
C|\nabla \phi|_1^2|F|_1^4 +C|F|_2^4 + C| F|_2^4+ | \nu'(\phi)|_2^2
\end{align*}
For the last term, we use \eqref{visco_diff} and thus get
\begin{align*}
\leq 
&\frac{\tau \lambda}{12} | \Delta^2 \phi|^2 +C(|\nabla \phi|_1^4 + |\phi|_2^2)|F|_2^2+
C|\nabla \phi|_1^2|F|_1^4 +C|F|_2^4 + C| F|_2^4\\
& + ( 1+ |\nabla \phi| + |\nabla \phi|^2 + |\Delta \phi|)^2
\end{align*}
As we defined $f$ as the double-well potential of 4th order polynomial, we estimate as
\begin{align*}
|(f''(\phi) \Delta \phi + f'''(\phi) |\nabla \phi|^2, \Delta^2 \phi)| &\lesssim |\phi|_{L^\infty(\Omega)} |\Delta \phi| |\Delta^2 \phi| + |\phi|_{L^\infty(\Omega)} |\nabla \phi|^2_{L^4(\Omega)} |\Delta^2 \phi| \\
&\lesssim |\phi|_2 |\Delta \phi| |\Delta^2 \phi| + |\phi|_2 |\nabla \phi|_1^2 |\Delta^2 \phi| \\
&\leq \frac{\tau \lambda}{12} |\Delta^2 \phi|^2 + C |\phi|_2^6.
\end{align*}

And, with $H^2(\Omega)$ norm bound as in \eqref{H2_phi}, combining the estimates above gives
\begin{equation}\label{2_32}
\frac{1}{2} \frac{d}{dt} |\Delta \phi|^2 + \tau \lambda(1-\frac{4}{12}) |\Delta^2 \phi|^2 \leq \frac{\tau \lambda}{12} |\nabla \Delta \phi_t|^2 + C(1 + \mathcal{Z})^3.
\end{equation}

Next, take $\partial_t$ to $\eqref{governing_system}_1$ and test with $u_t$ by using \eqref{2_17}.
\begin{align*}
&(u_{tt}, u_t) + (u_t \cdot \nabla u, u_t) + (\eta'(\phi) \phi_t \nabla u, \nabla u_t) + \int \eta(\phi) |\nabla u_t|^2 dx
\\&= -\lambda (\Delta \phi_t \nabla \phi + \Delta \phi \nabla \phi_t, u_t) - (\nu'(\phi) \phi_t  (FF^T - I) + \partial_t(FF^T)\nu(\phi), \nabla u_t) \\
&\quad - \left( \left( \frac{\eta'(\phi)}{\kappa(\phi)} - \frac{\eta(\phi)}{\kappa^2(\phi)} \kappa'(\phi) \right) \phi_t (1 - \phi) u - \eta(\phi) \phi_t u + \eta(\phi) (1 - \phi) u_t, u_t \right).
\end{align*}

For the terms above, we can estimate each term as follows.
\begin{align*}
|(u_t \cdot \nabla u, u_t)| &\lesssim |u_t|_{L^3(\Omega)} |\nabla u| |u_t|_{L^6(\Omega)} \\
&\lesssim |u_t|^{1/2} |u_t|_1^{3/2} |u|_1 \\
&\leq \frac{\alpha}{10} |u_t|_1^2 + C |u_t|^2 |u|_1^4.
\end{align*}
In a similar way, we can apply sobolev interpolation inequalities as
\begin{align*}
|(\eta'(\phi) \phi_t \nabla u, \nabla u_t)| &\lesssim |\eta'|_{L^\infty(\Omega)} |\phi_t|_{L^6(\Omega)} |\nabla u|_{L^3(\Omega)} |\nabla u_t| \\
&\lesssim |\eta'|_{L^\infty(\Omega)} |\phi_t|_1 |\nabla u|_1^{\frac 12} |\nabla u|^{\frac 12} |u_t|_1 \\
&\leq \frac{\alpha}{10} |u_t|_1^2 + \frac{\alpha}{4} |u|_2^2 + C |\phi_t|_1^4 |u|_1^2.
\end{align*}
Also,
\begin{align*}
|(\Delta \phi_t \nabla \phi + \Delta \phi \nabla \phi_t, u_t)| &\leq |(\Delta \phi_t, u_t \cdot \nabla \phi)| + |(\Delta \phi, u_t \cdot \nabla \phi_t)| \\
&\lesssim |\Delta \phi_t||u_t|_{L^3(\Omega)} |\nabla \phi_t|_{L^6(\Omega)} + |\nabla \phi_t||u_t|_{L^6(\Omega)} |\Delta \phi|_{L^3(\Omega)} \\
&\lesssim |\Delta \phi_t| |u_t|^{1/2}|\nabla u_t|^{1/2} |\phi|_2 + |\nabla \phi_t| |u_t|_1|\Delta \phi|^{1/2} |\Delta \phi|_1^{1/2}.
\end{align*}
We get following estimate continuously by using the generalized Poincaré inequality with $\langle \Delta \phi_t \rangle_\Omega = 0$,
\begin{align*}
&\leq \frac{\tau \lambda}{8} |\nabla \Delta \phi_t|^2 + C |u_t||u_t|_1 |\phi|_2^2 + \frac{\tau \lambda}{8\tilde{C}} |\Delta \phi|_1 + C |\nabla \phi_t|^2 |u_t|_1^2 |\Delta \phi| \\
&\leq \frac{\tau \lambda}{8} |\nabla \Delta \phi_t|^2 + \frac{\alpha}{10} |u_t|_1^2 + C |u_t|^2 |\phi|_2^4 + \frac{\tau \lambda}{8\tilde{C}} |\phi|_{H^4(\Omega)/R}^2 + C |\nabla \phi_t|^4 |\Delta \phi|^2.
\end{align*}
Applying Lemma \ref{lemma_phi} and \eqref{bdry_Delphi}, we obtain
\begin{align*}
&\leq \frac{\tau \lambda}{8} |\nabla \Delta \phi_t|^2 + \frac{\alpha}{10} |u_t|_1^2 + \frac{\tau \lambda}{8} |\Delta^2 \phi|^2 + C |\partial_n \Delta \phi|_{\mathbf{H}^{\frac 12}(\partial \Omega)}^2+ C |u_t|^2 |\phi|_2^4 + C |\nabla \phi_t|^4 |\Delta \phi|^2 \\
&\leq \frac{\tau \lambda}{8} |\nabla \Delta \phi_t|^2 + \frac{\alpha}{10} |u_t|_1^2 + \frac{\tau \lambda}{8} |\Delta^2 \phi|^2 \\
&+ |F|_2^4+ |u_t|^2 |\phi|_2^4 + C |\nabla \phi_t|^4 |\Delta \phi|^2.
\end{align*}
Also from $H^2(\Omega) \subset L^\infty(\Omega)$, we get
\begin{align*}
&|(\nu'(\phi)\phi_t(FF^T - I) + \partial_t(FF^T)\nu (\phi), \nabla u_t)| \\
&\leq |\nu'(\phi)|_{L^{\infty}}|\phi_t| (|F|_2^2+C) |\nabla u_t| +2 |F|_{L^\infty(\Omega)} |F_t| |\nu(\phi)|_{L^{\infty}} |\nabla u_t| \\
&\leq \frac{\alpha}{10} |u_t|_1^2 + C |F|_2^4 |\phi_t|^2 +C |\phi_t|^2+ C |F|_2^2 |F_t|^2 .
\end{align*}
And,
\begin{align*}
&\left| \left( (\frac{\eta'(\phi)}{\kappa(\phi)} - \frac{\eta(\phi)}{\kappa^2(\phi)} \kappa'(\phi) ) \phi_t (1 - \phi) u - \eta(\phi) \phi_t u+ \eta(\phi)  (1 - \phi) u_t, u_t \right) \right| 
\\
&\lesssim |\phi|_2 |\phi_t|_1 |u_t| |u|_1 + |\phi|_2 |u_t|^2.
\end{align*}
\\
Combine the above bounds by applying \eqref{H2_phi} with \eqref{average_const} and poincare inequality on $|u_t|$ from \eqref{poincare_u}. Then
\begin{equation}\label{2_33_}
\frac{1}{2} \frac{d}{dt} |u_t|^2 + \alpha(1-\frac{4}{10}) |u_t|_1^2 \leq \frac{\alpha}{4} |u|_2^2 + \frac{\tau \lambda}{8} |\Delta^2 \phi|^2 + \frac{\tau \lambda}{8} |\nabla \Delta \phi_t|^2 + C (1 + \mathcal{Z})^3.
\end{equation}
Next, we apply $\partial_t$ to $\eqref{governing_system}_1$ with \eqref{2_17} and then test with $A u_t$.
\begin{align*}
&{ \frac{1}{2}|\nabla u_{t}|^2} + (u_t \cdot \nabla u, A u_t) + (\nabla \cdot (\eta'(\phi) \phi_t \nabla u), \Delta  u_t)  -(\nabla \cdot (\eta(\phi)  \nabla u_t), A  u_t)
\\& = - \lambda (\Delta \phi_t \nabla \phi + \Delta \phi \nabla \phi_t, A u_t) + (\nabla \cdot (\nu'(\phi) \phi_t  (FF^T - I) + \partial_t(FF^T)\nu(\phi)), A  u_t) \\
&\quad + \left( \left( \frac{\eta'(\phi)}{\kappa(\phi)} - \frac{\eta(\phi)}{\kappa^2(\phi)} \kappa'(\phi) \right) \phi_t (1 - \phi) u - \eta(\phi) \phi_t u + \eta(\phi) (1 - \phi) u_t, A u_t \right).
\end{align*}
Each terms are estimated as follows. Using \eqref{Au_decomposition} and \eqref{p*} gives
\begin{align*}
- (\text{div}(\eta(\phi) \nabla u_t), Au_t) &= - (\eta(\phi) \Delta u_t, Au_t) - (\eta'(\phi) \nabla \phi \cdot \nabla u_t, Au_t) \\
&= (\eta(\phi) Au_t, Au_t) - (\eta(\phi) \nabla p^*, Au_t) - (\eta'(\phi) \nabla \phi \cdot \nabla u_t, Au_t) \\
&\geq \alpha |Au_t|^2 + (\eta'(\phi)  \nabla p^*, Au_t) - (\eta'(\phi) \nabla \phi \cdot \nabla u_t, Au_t).
\end{align*}
And, applying \eqref{p*_uH1} yields
\begin{align*}
\left| (\eta(\phi)  \nabla p_t^*, Au_t) \right| &\leq |\eta(\phi) |_{L^\infty} |p_t^*|_1  |Au_t| \\
&\leq C |p_t^*|_1  |Au_t| \\
&\leq \frac{\alpha}{12} |Au_t|^2 + C |u_t|_1^2 ,
\end{align*}
\begin{align*}
\left| (\eta'(\phi) \nabla \phi \cdot \nabla u_t, Au_t) \right| &\leq C |\nabla \phi|_1 | u_t|_1^{1/2} | u_t|_2^{1/2} |Au_t| \\
&\leq \frac{\alpha}{14} |Au_t|^2 + C |\phi|_2^2 | u_t|_2^2.
\end{align*}
\begin{align*}
|(u_t \cdot \nabla u, \Delta u_t)| &\leq |u_t|_{L^3(\Omega)} |\nabla u|_{L^6(\Omega)} |\Delta u_t| \\
&\leq \frac{\alpha}{14} |A u_t|^2 + C |u_t|_1^2 |u|_3^2.
\end{align*}
In the similar way,
\begin{align*}
&|(\eta''(\phi) \nabla \phi \phi_t \nabla u + \eta'(\phi) \nabla \phi_t \nabla u+ \eta'(\phi)\phi_t \Delta u, A u_t)| 
\\
&\leq (
|\eta''|_{L^\infty(\Omega)} |\nabla \phi|_{L^6(\Omega)}|\phi_t|_{L^6(\Omega)} |\nabla u|_{L^6(\Omega)}
+|\eta'|_{L^\infty(\Omega)} |\nabla \phi_t| |u|_{L^\infty(\Omega)}
\\
&+|\eta'|_{L^\infty(\Omega)} |\phi_t|_{L^3(\Omega)} |u|_{L^6(\Omega)}
)|A u_t| \\
&\leq \frac{\alpha}{14} |A u_t|^2 + C(|\Delta \phi|^2|\nabla \phi_t|^2|u|_3^2 +|\nabla \phi_t|^2|u|_2^2 + |\nabla \phi_t|^2|u|_1^2).
\end{align*}
In the last equality, inequalities \eqref{H2_phi} and \eqref{phit_H1} were used.\\
And, for the formula below
\begin{equation}\label{est_Au}
    (\Delta \phi_t \nabla \phi + \Delta \phi \nabla \phi_t, { A }u_t) = -(\Delta \phi_t \nabla \phi + \Delta \phi \nabla \phi_t, { \Delta }u_t) +(\Delta \phi_t \nabla \phi + \Delta \phi \nabla \phi_t, p^*),
\end{equation}
detailed estimate with \eqref{p*_uH1} is
\begin{align*}
&|(\Delta \phi_t \nabla \phi + \Delta \phi \nabla \phi_t, p^*)| \leq |(\Delta \phi_t \nabla \phi, p^* )| + |(\Delta \phi \nabla \phi_t,p^* )| \\
&\leq 
(|\Delta \phi_t |_{L^3}|\nabla \phi| + |\Delta \phi |_{L^3}|\nabla \phi_t|) |p^*|_{L^6} + |\Delta \phi |_{L^3}|\nabla \phi_t||p^* |_{L^6} \\
&\leq 
\frac{\alpha}{14} |A u_t|^2 + |\Delta \phi_t ||\Delta \phi_t |_1|\nabla \phi|^2 + |\Delta \phi ||\Delta \phi |_1|\nabla \phi_t|^2.
\end{align*}
To estimate $|\Delta \phi_t |_1$ and $|\Delta \phi |_1$, we need to use \eqref{Delphit_H1} and \eqref{delphi_H2}. Based on these inequalities,
\begin{align*}
&
\frac{\alpha}{14} |A u_t|^2 + |\Delta \phi_t ||\Delta \phi_t |_1|\nabla \phi|^2 + |\Delta \phi ||\Delta \phi |_1|\nabla \phi_t|^2 \\
& \leq
\frac{\alpha}{14} |A u_t|^2 + \frac{\tau \lambda}{12} |\nabla \Delta \phi_t|^2 +\frac{\tau \lambda}{12} | \Delta^2 \phi|^2 + C(|\Delta \phi |^2|\nabla \phi_t|^4 + |\Delta \phi |^2|\nabla \phi_t|^4 ).
\end{align*}
The remaining term in \eqref{est_Au} can be estimated as follows.
\begin{align*}
&|(\Delta \phi_t \nabla \phi + \Delta \phi \nabla \phi_t, { \Delta }u_t)| \leq |(\nabla \cdot (\Delta \phi_t \nabla \phi),\nabla  u_t )| + |(\nabla \cdot (\Delta \phi \nabla \phi_t),\nabla  u_t )| \\
&\leq 
 |\nabla \Delta \phi_t||\nabla \phi|_{L^6(\Omega)}  |\nabla u_t|_{L^3(\Omega)}+| \Delta \phi_t|_{L^6(\Omega)}|\nabla^2 \phi|  |\nabla u_t|_{L^3(\Omega)} 
 \\ & + {  |\nabla \Delta \phi|_{L^6(\Omega)}|\nabla \phi_t|_{L^3(\Omega)}  |\nabla u_t|+| \Delta \phi|_{L^3(\Omega)}}|\nabla^2 \phi_t|_{L^6(\Omega)} |\nabla u_t|
 \\
&\leq \frac{\tau \lambda}{12} |\nabla \Delta \phi_t|^2 +\frac{\tau \lambda}{14} |A u_t|^2 + C(
|\Delta \phi|^4|\nabla u_t|^2 + 
|\Delta \phi|^4|\nabla u_t|^2 
\\ &
+ {  |\nabla \Delta \phi|_{L^6(\Omega)}|\nabla \phi_t|_{L^3(\Omega)}  |\nabla u_t|+| \Delta \phi|_{L^3(\Omega)}}|\nabla^2 \phi_t|_{L^6(\Omega)} |\nabla u_t|
\end{align*}
The inequality $|\nabla u_t |_1 \leq |A u_t|$ holds from \eqref{u_H2} and this was used above. And we used \eqref{Delphit_H1} and \eqref{H2_phi} for the terms regarding $\phi$.\\
Apply Lemma \ref{lemma_phi} to obtain
\begin{align}\label{delphi_H2}
& \frac{\tau \lambda}{12\tilde{C}} |\Delta \phi|_2^2
\leq \frac{\tau \lambda}{12} (| \Delta^2 \phi|^2 + |\int_{\partial \Omega}\Delta \phi d \Gamma|^2)
\\& \leq \frac{\tau \lambda}{12} (| \Delta^2 \phi|^2 + |\int_{\partial \Omega} \Delta \phi d \Gamma|^2)
\\
&\leq \frac{\tau \lambda}{12} (| \Delta^2 \phi|^2 + |\partial_n \frac{\nu'(\phi)}{2} tr(FF^T -I)|_{H^{\frac 12}(\partial \Omega)})
\\
& \leq \frac{\tau \lambda}{12} | \Delta^2 \phi|^2 + C (1 + |\nabla \phi| + |\nabla \phi|^2 + |\Delta \phi|)^2|F|_2^4 )
\end{align}
by using \eqref{bdry_Delphi}. On this inequality, we close the above inequality continuously
\begin{align*}
&\frac{\tau \lambda}{12} |\nabla \Delta \phi_t|^2 +\frac{\tau \lambda}{12} |A u_t|^2 + C(
|\Delta \phi|^4|\nabla u_t|^2 + 
|\Delta \phi|^2|\nabla u_t|| u_t| 
\\ &
+ {  |\nabla \Delta \phi|_{L^6(\Omega)}|\nabla \phi_t|_{L^3(\Omega)}  |\nabla u_t|+| \Delta \phi|_{L^3(\Omega)}}|\nabla^2 \phi_t|_{L^6(\Omega)} |\nabla u_t|\\
&\leq \frac{\tau \lambda}{12} |\nabla \Delta \phi_t|^2 +\frac{\tau \lambda}{10} |A u_t|^2 + \frac{\tau \lambda}{12} | \Delta^2 \phi|^2 + C(
|\Delta \phi|^4|\nabla u_t|^2 + 
|\Delta \phi|^4|\nabla u_t|^2
\\ & + C((1 + |\nabla \phi| + |\nabla \phi|^2 + |\Delta \phi|)^2|F|_2^4)+|\nabla \phi_t|^2|\nabla u_t|^4) 
\\& +C(1 + |\nabla \phi| + |\nabla \phi|^2 + |\Delta \phi|)^2|F|_2^4 + |\Delta \phi|^2|\nabla u_t|^4)
\end{align*}
Above, we used \eqref{phit_H1} and $\langle \Delta \phi_t \rangle = 0$ for generalized poincare inequality to derive $|\nabla \phi_t|_1 \leq |\Delta \phi_t| \leq |\nabla \Delta \phi_t|$.
\\
\par
From $H^2(\Omega) \subset L^\infty(\Omega)$, we get
\begin{align*}
&|(\nu'(\phi)\phi_t(FF^T - I) + \partial_t(FF^T)\nu (\phi), A u_t)| \\
&\leq |\nu'(\phi)|_{L^{\infty}}|\phi_t| (|F|_2^2+C) |A u_t| +2 |F|_{L^\infty(\Omega)} |F_t| |\nu(\phi)|_{L^{\infty}} |A u_t| \\
&\leq \frac{\alpha}{14} |A u_t|^2 + C |F|_2^4 |\phi_t|^2 +C |\phi_t|^2+ C |F|_2^2 |F_t|^2 .
\end{align*}
And,
\begin{align*}
&\left| \left( (\frac{\eta'(\phi)}{\kappa(\phi)} - \frac{\eta(\phi)}{\kappa^2(\phi)} \kappa'(\phi) ) \phi_t (1 - \phi) u - \eta(\phi) \phi_t u+ \eta(\phi)  (1 - \phi) u_t,A u_t \right) \right| 
\\
&\leq \frac{\tau \lambda}{14} | A u_t|^2 +C(|\phi_t|_1^2 (1+ |\phi|_1^2 ) |u|_1^2 + |\phi_t|_1^2 |u|_1^2 +(1+ |\phi|_2^2 )|u_t|^2).
\end{align*}
\\
Combine the above estimates with \eqref{H2_phi} and \eqref{average_const} to apply the Generalized poincare inequality.
\begin{equation}\label{2_33__}
\frac{1}{2} \frac{d}{dt} |\nabla u_t|^2 + \alpha(1-\frac{6}{14}) |A u_t|^2 \leq  \frac{\tau \lambda}{14} |\Delta^2 \phi|^2 + \frac{\tau \lambda}{12} |\nabla \Delta \phi_t|^2 + C (1 + \mathcal{Z})^3.
\end{equation}
To organize \eqref{2_33_} and \eqref{2_33__} until now, we summarize these as
\begin{equation}\label{2_33}
\frac{1}{2} \frac{d}{dt} (|u_t|^2 + |\nabla u_t|^2) + \alpha (1- \frac{4}{10} )| u_t|_1^2  +\alpha(1- \frac{6}{14} )|A u_t|^2\leq \frac{\alpha}{16} |u|_2^2 +  \frac{\tau \lambda}{12} |\Delta^2 \phi|^2 + \frac{\tau \lambda}{12} |\nabla \Delta \phi_t|^2 + C (1 + \mathcal{Z})^3.
\end{equation}
\\
\par
For the next estimate, we take $\partial_t$ to $\eqref{governing_system}_3$ and then test with $F_t$.
\begin{align*}
\int_\Omega \text{tr}(F_t F_t^T) dx -k \int_\Omega \text{tr}((\nu (\phi) \Delta F)_t F_t^T) dx -2k \int_\Omega \text{tr}(\nabla \nu (\phi) \cdot \nabla F)_t F_t^T) dx 
\\+ \int_\Omega \text{tr}((u \cdot \nabla F)_t F_t^T) dx = \int_\Omega \text{tr}((\nabla u F)_t F_t^T) dx.
\end{align*}
\\
From the fact $\int_\Omega \text{tr}((u \cdot \nabla F_t) F_t^T) dx = 0$,
\begin{align*}
\left| \int_\Omega \text{tr}(u_t \cdot \nabla F F_t^T) dx \right| &\leq C |u_t|_1 |F|_2 |F_t| \\
&\leq \frac{\alpha}{8} |u_t|_1^2 + C |F|_2^2 |F_t|^2.
\end{align*}
\\
From Neumann boundary condition on $F$ in \eqref{bdry},
\begin{align*}
& -k\int_\Omega \text{tr}((\nu (\phi) \Delta F)_t F_t^T) dx  =   -k\int_\Omega \text{tr}((\nu' (\phi)\phi_t \Delta F + \nu (\phi) \Delta F_t) F_t^T) dx 
\\& \geq   -\beta k \int_\Omega \text{tr}((\phi_t \Delta F ) F_t^T) dx - \beta k \int_\Omega  tr(\Delta F_t F_t^T)dx 
\\& \geq   -\beta k \int_\Omega \text{tr}((\phi_t \Delta F ) F_t^T) dx {  + \beta k \int_\Omega \sum_{i,j= 1,2} (\nabla F_t^{ij})^2 dx }
\end{align*}
Estimate the other term as
\begin{align*}
\left| \beta k\int_\Omega \text{tr}((\phi_t \Delta F ) F_t^T) dx \right| &\leq \beta k |\phi_t|_{L^3{\Omega}} |\Delta F|| F_t|_{L^6(\Omega)}
\\
&\leq \frac{\beta k}{4} \int_\Omega \sum_{i,j= 1,2} (\nabla F_t^{ij})^2 dx + C (|\nabla \phi_t|^2|\Delta F|^2),
\end{align*}
where the last inequality was driven by \eqref{phit_H1}.
\\
And for the other terms in the test,
\begin{align*}
&\left|2k \int_\Omega \text{tr}(\nabla \nu (\phi) \cdot \nabla F)_t F_t^T) dx \right|
= \left| 2k\int_\Omega \text{tr}(( \nu' (\phi)\nabla \phi \cdot \nabla F)_t F_t^T) dx \right| 
\\&
= \left| 2k\int_\Omega \text{tr}(( \nu'' (\phi) \phi_t \nabla \phi \cdot \nabla F+\nu' (\phi) \nabla \phi_t \cdot \nabla F+\nu' (\phi)  \nabla \phi \cdot \nabla F_t) F_t^T) dx \right| 
\\&
\leq 2 \beta k(| \phi_t |_{L^6}|\nabla\phi  |_{L^6}\sum_{i,j}|\nabla F^{ij}|_{L^6}+ |\nabla \phi_t |_{L^3}\sum_{i,j}| \nabla F^{ij}|_{L^6}) |F_t|
\\&+\beta k|\int_{\Omega}\Delta \phi tr (F_t F_t^T)dx|
\\&
\leq 2 \beta k(|\nabla \phi_t ||\Delta \phi  || F|_2+ |\nabla \phi_t |^{\frac{1}{2}}|\Delta \phi_t |^{\frac{1}{2}}| F|_2) |F_t|+\beta k|\int_{\Omega}\Delta \phi \text{tr} (F_t F_t^T)dx|
\\&\leq  \frac{\beta k}{4} \int_\Omega \sum_{i,j= 1,2} (\nabla F_t^{ij})^2 dx +\frac{\tau \lambda}{8} |\nabla \Delta \phi_t|^2  + C (|\nabla \phi_t|^2|\Delta \phi|^2 +|F|_2^2|F_t|^2 + |\nabla \phi_t|^2 + |F_2|^2|F_t|^2 
\\ & +|\Delta \phi|^4 |F_t|_1^2)
\end{align*}
from \eqref{gradphi_H1} for the variable $|\nabla \phi|_1$. Also we used \eqref{Delphi_H1} to estimate $|\Delta \phi_t|_1$.
\\
And, similarly,
\begin{align*}
\left| \int_\Omega \text{tr}((\nabla u_t F + \nabla u F_t) F_t^T) dx \right| &\leq |\nabla u_t| |F|_2 |F_t| + |u|_3 |F_t|^2 \\
&\leq \frac{\alpha}{8} |u_t|_1^2 + C |F|_2^2 |F_t|^2 + C |u|_3 |F_t|^2.
\end{align*}
\\
Thus we obtain
\begin{equation}\label{est_F_t_}
\begin{aligned}
&\frac{1}{2} \frac{d}{dt} |F_t|^2 + \frac{\beta k}{2} \int_\Omega \sum_{i,j= 1,2} (\nabla F_t^{ij})^2 dx
\leq \frac{\alpha}{4} |u_t|_1^2 + C|u|_3 |F_t|^2+ C \left( 1+Z\right)^3.
\end{aligned}
\end{equation}
\\
\par
As the next estimate, on $\eqref{governing_system}_3$, take $\partial_t$ and test with $- \Delta F_t$.
\begin{align*}
\int_\Omega \text{tr}(\nabla F_{tt} \nabla F_t^T) dx +k \int_\Omega \text{tr}((\nu (\phi) \Delta F)_t \Delta F_t^T) dx +2k \int_\Omega \text{tr}(\nabla \nu (\phi) \cdot \nabla F)_t \Delta F_t^T) dx 
\\- \int_\Omega \text{tr}((u \cdot \nabla F)_t \Delta F_t^T) dx = - \int_\Omega \text{tr}((\nabla u F)_t \Delta F_t^T) dx.
\end{align*}
\\
Also, for the other terms
\begin{align*}
\left| \int_\Omega \text{tr}((u \cdot \nabla F_t) \Delta F_t^T) dx \right| &\leq C |u|_3 |\nabla F_t| |\Delta F_t| \\
&\leq \frac{\alpha k }{10} |\Delta F_t|^2 + C |u|_3^2 |\nabla F_t|^2.
\end{align*}
\begin{align*}
\left| \int_\Omega \text{tr}(u_t \cdot \nabla F \Delta F_t^T) dx \right| &\leq C |u_t|^{\frac 12}|u_t|_1^{\frac 12} |F|_2 |\Delta F_t| \\
&\leq \frac{\alpha k}{10} |u_t|_1^2 +\frac{\alpha}{8} |\Delta F_t|^2 + C |F|_2^4 |u_t|^2.
\end{align*}
\\
With the Neumann boundary condition on $F$ as in \eqref{bdry},
\begin{align*}
& k\int_\Omega \text{tr}((\nu (\phi) \Delta F)_t \Delta F_t^T) dx  =   k\int_\Omega \text{tr}((\nu' (\phi)\phi_t \Delta F + \nu (\phi) \Delta F_t) \Delta F_t^T) dx 
\\& \geq   \alpha k \int_\Omega \text{tr}((\phi_t \Delta F ) \Delta F_t^T) dx + \alpha k |\Delta F_t|^2
\end{align*}
Estimate from the above term as
\begin{align*}
\left| \alpha k \int_\Omega \text{tr}((\phi_t \Delta F ) \Delta F_t^T) dx \right| &\leq \alpha k |\phi_t|_{L^6{\Omega}} |\Delta F|_{L^3(\Omega)}| \Delta F_t|\\
&\leq \alpha k |\phi_t|_1 |\Delta F|^{\frac 12}|\Delta F|_1^{\frac 12}| \Delta F_t|
\\
&\leq \frac{\alpha k}{10} |\Delta F_t|^2 + \frac{\alpha k}{2}|\nabla \Delta F^{i,j}|^2 + C | \nabla \phi_t|^4 |\Delta F|^2,
\end{align*}
where the last inequality was driven by \eqref{DelF_H1} and \eqref{phit_H1}.
\\
And for the other terms in the test,
\begin{align*}
&\left|2k \int_\Omega \text{tr}(\nabla \nu (\phi) \cdot \nabla F)_t \Delta F_t^T) dx \right|
= \left| 2k\int_\Omega \text{tr}(( \nu' (\phi)\nabla \phi \cdot \nabla F)_t \Delta F_t^T) dx \right| 
\\&
= \left| 2k\int_\Omega \text{tr}(( \nu'' (\phi) \phi_t \nabla \phi \cdot \nabla F+\nu' (\phi) \nabla \phi_t \cdot \nabla F+\nu' (\phi)  \nabla \phi \cdot \nabla F_t) \Delta F_t^T) dx \right| 
\\&
\leq 2 \beta k(| \phi_t |_{L^6}|\nabla\phi  |_{L^6}\sum_{i,j}|\nabla F^{ij}|_{L^6}+ |\nabla \phi_t |_{L^3}\sum_{i,j}| \nabla F^{ij}|_{L^6}) |\Delta F_t|
\\&+\beta k|\nabla \phi |_{L^6}|\nabla F_t |_{L^3}|\Delta F_t|
\\&
\leq 2 \beta k(|\nabla \phi_t ||\Delta \phi  || F|_2+ |\nabla \phi_t |^{\frac{1}{2}}|\Delta \phi_t |^{\frac{1}{2}}| F|_2) |\Delta F_t|+\beta k|\nabla \phi |_1|\nabla F_t |^{\frac 12}|\nabla F_t |_1^{\frac 12}|\Delta F_t|
\\&
\leq \frac{\alpha k}{10} |\Delta F_t|^2  + \frac{\beta k}{4} | \nabla \Delta \phi_t|^2  +C (|\nabla \phi_t|^2|\Delta \phi|^2|F|_2^2 + |\nabla \phi_t|^2|F|_2^4 + |\Delta \phi|^4|F_t|_1^2 ).
\end{align*}
from \eqref{gradphi_H1} for the variable $|\nabla \phi|_1$. Also we used \eqref{Delphi_H1} to estimate $|\Delta \phi_t|_1$ and  \eqref{DelF_H1} to estimate $|\Delta F_t|_1$.
\\
Similarly we also have,
\begin{align*}
\left| \int_\Omega \text{tr}((\nabla u_t F + \nabla u F_t) \Delta F_t^T) dx \right| &\leq { |\nabla u_t| |F|_2 |\Delta F_t|} + |u|_3 |F_t||\Delta F_t| \\
&\leq \frac{\alpha k}{10} |\Delta F_t|^2 + C (|\nabla u_t|^2 |F|_2^2 + C |u|_3^2 |F_t|c).
\end{align*}
\\
Thus we obtain
\begin{equation}\label{est_F_t__}
\begin{aligned}
&\frac{1}{2} \frac{d}{dt} |\nabla F_t|^2 + \frac{\alpha k}{2} |\Delta F_t|^2  
\\&
\leq  \frac{\beta k}{4} | \nabla \Delta \phi_t|^2 +  C|u|_3^2 |F_t|^2 +C(1+Z)^3.
\end{aligned}
\end{equation}
Summing up the estimates \eqref{est_F_t_} and \eqref{est_F_t__} enables
\begin{equation}\label{est_F_t}
\begin{aligned}
&\frac{1}{2} \frac{d}{dt} |F_t|^2 + \frac{d}{dt} |\nabla F_t|^2 + \frac{\beta k}{2} \int_\Omega \sum_{i,j= 1,2} (\nabla F_t^{ij})^2 dx
 +  \frac{\beta k}{2} |\Delta F_t|^2  
\\&
\leq \frac{\alpha}{10} |u_t|_1^2 
 + \frac{\tau \lambda}{12} | \nabla \Delta \phi_t|^2 +  C|u|_3^2 |F_t|^2 +C(1+Z)^3.
\end{aligned}
\end{equation}
\\
{Next,} we apply $\partial_t$ to $\eqref{governing_system}_4$, test it with $-\Delta \phi_t$ and then use the expression $\eqref{governing_system}_5$ for $\mu$ to obtain,
\begin{align*}
&\frac{1}{2} \frac{d}{dt} |\nabla \phi_t|^2 
-\frac{1}{2} \tau (
(\nu''(\phi) \nabla \phi_t + \nu'''(\phi)\phi_t \nabla \phi)\text{tr} (FF^T - I), \nabla \Delta \phi_t
)- (u_t \cdot \nabla \phi + u \cdot \nabla \phi_t, \Delta \phi_t) 
\\
&= \tau \lambda (\Delta^2\phi_t, \Delta \phi_t) + \frac{\nu'(\phi)}{2} \tau (\partial_t (\nabla \text{tr}(FF^T)), \nabla \Delta \phi_t) \\
&\quad - \tau \lambda \gamma (f''(\phi) \nabla \phi_t + f'''(\phi) \phi_t \nabla \phi, \nabla \Delta \phi_t).
\end{align*}
Above, we estimate the first term in the right hand side as
\begin{equation}
    \begin{aligned}
        &(\Delta^2\phi_t, \Delta \phi_t) = - |\nabla  \Delta \phi_t|^2 + \int_{\partial \Omega} \partial_n \Delta \phi_t \Delta \phi_t d\Gamma
    \end{aligned}
\end{equation}
and from $\eqref{governing_system}_5$
\begin{equation}
    \begin{aligned}
        &\int_{\partial \Omega} \partial_n \Delta \phi_t \Delta \phi_t d\Gamma
         = \int_{\partial \Omega} \partial_n (\Delta \phi_t )^2 d\Gamma 
        \\& = \frac{1}{\lambda}\int_{\partial \Omega} \partial_n (-\mu_t + \lambda \gamma f''(\phi)\phi_t - \nu''(\phi)\phi_t tr(FF^T-I)+2 \nu'(\phi) trF_tF^T)^2 d\Gamma\\&=0
    \end{aligned}
\end{equation}
from the Nuemann boundary condition on $\phi$, $F$ and $\mu$ in \eqref{bdry}.
\\
For the other partial term estimate,
\begin{equation}\label{2_58_}
    \begin{aligned}
        &-\frac{1}{2} \tau (
(\nu''(\phi) \nabla \phi_t + \nu'''(\phi)\phi_t \nabla \phi)\text{tr} (FF^T - I), \nabla   \Delta \phi_t
) 
\\&
\geq -\frac{\tau C}{2} |\nabla \phi_t \Delta \phi_t|^2+\frac{\tau \beta}{2} |\Delta \psi_t|^2
+  \frac{\tau \alpha}{2}(\phi_t \nabla \phi\text{tr} (FF^T - I), \nabla \Delta \phi_t).
    \end{aligned}
\end{equation}
On \eqref{2_58_}, the last terms are bounded as
\begin{equation}
    \begin{aligned}
        &|(\phi_t \nabla \phi\text{tr} (FF^T - I), \nabla \Delta \phi_t) |
        \leq |\phi_t|_1|\nabla \phi|_1|\text{tr} (FF^T - I)|_1|\nabla \Delta \phi_t|
        \\& \leq \frac{\tau \alpha}{24} |\nabla \Delta \phi_t|^2 + C(|\phi_t|_1^2 |\phi|_2^2 |F|_1^2|F|_2^2).
    \end{aligned}
\end{equation}
Using \eqref{average} along with the generalized Poincar\'e inequality, we obtain the following estimates on the terms appearing in the equation above.

\begin{align*}
|(u_t \cdot \nabla \phi + u \cdot \nabla \phi_t, \Delta \phi_t)| &\leq C |u_t| |\nabla \phi|_1 |\nabla \Delta \phi_t| + C |u|_1 |\nabla \phi_t| |\nabla \Delta \phi_t| \\
&\leq \frac{\tau \lambda}{12} |\nabla \Delta \phi_t|^2 + C (|u_t|^2 |\phi|_2^2 + |u|_1^2 |\nabla \phi_t|^2).
\end{align*}

The last inequality comes from Gagliardo-Nirenberg inequality(See \cite{bm18}).
Therefore, use this inequality to get
\begin{align*}
|(\partial_t (\nabla \text{tr}(FF^T)), \nabla \Delta \phi_t)| &\leq (|F_t|_1 |\nabla F|_1 + |F|_{L^\infty(\Omega)} |\nabla F_t|) |\nabla \Delta \phi_t| \\
&\leq \frac{\tau \lambda}{6} |\nabla \Delta \phi_t|^2 +  C {   |F_t|_1^2}|F|_2^2 .
\end{align*}
\\
\\
Next we use the fact that $f$ is a polynomial of degree 4 to observe,
\begin{align*}
&|(f''(\phi) \nabla \phi_t + f'''(\phi) \phi_t \nabla \phi, \nabla \Delta \phi_t)| \\
&\lesssim (|\phi|_{L^\infty(\Omega)}^2 + |\phi|_{L^\infty(\Omega)} + C )|\nabla \phi_t| |\nabla \Delta \phi_t| + (|\phi|_{L^\infty(\Omega)}+C) |\phi_t||\nabla \phi|_{L^3(\Omega)} |\nabla \Delta \phi_t| \\
&\leq \frac{\tau \lambda}{12} |\nabla \Delta \phi_t|^2 + C ((|\phi|_2^4 + |\phi|_2^2 + 1)|\nabla \phi_t|^2 +(|\phi|_2^2 + 1)|\phi|_2^2|\phi_t|^2).
\end{align*}
Hence, combining the above estimates and using \eqref{H2_phi} , generalized poincare theorem with \eqref{average_const} and Young's inequality, we obtain
\begin{equation}\label{2_34}
\begin{aligned}
&\frac{1}{2} \frac{d}{dt} |\nabla \phi_t|^2 + \tau \lambda(1-\frac{3}{12}) |\nabla \Delta \phi_t|^2 
\\
&\leq C(1+Z)^3
\end{aligned}
\end{equation}
\\
We next apply $\nabla$ to $\eqref{governing_system}_3$ and then test with $-\nabla \Delta F$. That is we consider,
\begin{equation}\label{Del_F_test}
\begin{aligned}
(\Delta F_t ,\Delta F) + &k (\nabla (\nu (\phi) \Delta F), \nabla \Delta F)& \\
&+ 2k (\nabla (\nabla \nu(\phi) \cdot \nabla F), \nabla \Delta F)- (\nabla (u \cdot \nabla F), \nabla \Delta F) 
\\
& = -(\nabla (\nabla u F), \nabla \Delta F) .
\end{aligned}
\end{equation}
Here, we can expand the terms with integration by part as
\begin{equation}\label{kalpha_1}
\begin{aligned}
 &k (\nabla (\nu (\phi) \nabla \Delta F^{ij}), \nabla \Delta F^{ij}) 
\\& =  k (\nu'(\phi) \nabla \phi \Delta F^{ij}, \nabla \Delta F^{ij}) + k(\nu (\phi) \nabla \Delta F^{ij}, \nabla \Delta F^{ij})
\end{aligned}
\end{equation}
for $1 \leq i, j \leq d$. \\
Above, we employed Lemma \ref{lem_neg_sobolev} the interpolation inequality and its constant $C_d$ for bound of $H^{-1/2}(\partial \Omega)$-norm. Also we used \eqref{H-1norm_Del} and Gagliardo-Nirenberg inequality (find text to refer) to bound the $H^{-1}(\Omega)$-norm.
\\
For the 2nd term in the right hand side of \eqref{kalpha_1}, from the bound of the $\nu$,
\begin{equation}\label{}
\begin{aligned}
& k ( \nu (\phi) \nabla \Delta F^{ij}, \Delta F^{ij}) \geq k \alpha ( \nabla \Delta  F^{ij}, \nabla \Delta F^{ij})
\end{aligned}
\end{equation}
Also for the 1st term in the right hand side of \eqref{kalpha_1},
\begin{equation}\label{}
\begin{aligned}
&k |( \nu' (\phi)\nabla \phi  \Delta F^{i,j}, \nabla \Delta F^{i,j})| \\
&\leq
k|\nu'(\phi)|_{L^{\infty}}|\nabla \phi|_{L^{6}(\Omega)} |\Delta F^{i,j}|_{L^{3}(\Omega)}|\nabla \Delta F^{i,j}| \\
&\leq \alpha k|\Delta \phi| |\Delta F^{i,j}|^{\frac 12}| \Delta F^{i,j}|_1^{\frac 12}|\nabla \Delta F^{i,j}| \\
&\leq \frac{\alpha k}{6}|\nabla \Delta F^{i,j}|^2 + |\Delta \phi|^2| \Delta F^{i,j}|
\end{aligned}
\end{equation}
for $1 \leq i, j \leq d$. We used \eqref{gradphi_H1} and \eqref{DelF_H1}.
\\
For another term in the \eqref{Del_F_test}, using \eqref{H-1norm_Del} yields
\begin{align}\label{2_50_}
&|2k (\nabla (\nabla \nu(\phi) \cdot \nabla F^{i,j}), \nabla \Delta F^{i,j}) |\leq 
C|(\nabla (\nu' (\phi) \nabla \phi \cdot \nabla F^{i,j}), \nabla \Delta F^{i,j}) |
\\&\leq C|( (\nu'' (\phi) \nabla \phi \cdot \nabla F^{i,j}), \nabla \Delta F^{i,j}) | + C|( (\nu' (\phi) \Delta \phi \cdot \nabla F^{i,j}), \nabla \Delta F^{i,j}) |
\\&\leq C|\nabla \phi |_1| \nabla F^{i,j}|_1|\nabla \Delta F^{i,j} | + C|\Delta \phi |^{\frac 12}|\Delta \phi |_1^{\frac 12}|\nabla F^{i,j}|_1| \nabla \Delta F^{i,j} |
\\&\leq \frac{\alpha k}{6} \sum_{i,j}|\nabla \Delta F^{ij}|^2 + \dfrac{\tau \lambda}{14}|\Delta^2 \phi|+|\phi|_2^2|F|_2^2 + |\phi|_2^2|F|_2^4
\end{align}
by using \eqref{DelF_H1}.
\\

Thanks to the divergence free property of $u$, $\text{tr}( u \cdot \nabla \Delta F \Delta F^T) dx=0$ and thus
\begin{align*}
\left| \int_\Omega \text{tr}(\nabla (u \cdot \nabla F) \nabla \Delta F^T) dx \right| &= |\nabla u || \nabla F| |\nabla \Delta F| \\
&\leq C  \frac{\alpha k}{6} \sum_{i,j}|\nabla \Delta F^{ij}|^2 +|u|_1^2|F|_1^2
\end{align*}
And, 
\begin{align*}
\left| \int_\Omega \text{tr}(\nabla (\nabla u F) \nabla \Delta F^T) dx \right| &\leq C (|\nabla^2 u ||F|_2 |\nabla \Delta F|+ |\nabla u |_1|\nabla F|_1 |\nabla \Delta F|) \\
&\leq \frac{\alpha k}{6} \sum_{i,j}|\nabla \Delta F^{ij}|^2 +|u|_2^2|F|_2^2
\end{align*}

We used \eqref{H2_phi} for the second inequality above and 
Since $\int_\Omega \text{tr}(\Delta F_t \Delta F^T) dx = \frac{1}{2} \frac{d}{dt} |\Delta F|^2$, we thus have,
\begin{equation}\label{2_37}
\begin{aligned}
\frac{1}{2} \frac{d}{dt} |\Delta F|^2 +\frac{\alpha k}{2} \sum_{i,j}|\nabla \Delta F^{i,j}|^2  \leq &C |u|_3 |F|_2^2+ + C(1+\mathcal{Z})^3.
\end{aligned}
\end{equation}
\\
\par
{  In similar way, for point-wise component, apply $\nabla$ to $\eqref{governing_system}_3$ component-wise and then test with $\nabla F^{ij}$ for $1 \leq i, j \leq d$.} We can write this down as
\begin{equation}\label{}
\begin{aligned}
(\nabla F_t^{i,j} ,\nabla F^{i,j}) - &k (\nabla (\nu (\phi) \Delta F^{i,j}),  \nabla F^{i,j})& \\
&- 2k (\nabla (\nabla \nu(\phi) \cdot \nabla F^{i,j}),  \nabla F^{i,j})+ (\nabla (u \cdot \nabla F^{i,j}), \nabla F^{i,j}) 
\\
& = (\nabla (\nabla u F)^{i,j}, \nabla F^{i,j}) .
\end{aligned}
\end{equation}
Estimates for each term are as follows.
\begin{equation*}
    \begin{aligned}
        &-k (\nabla (\nu (\phi) \Delta F^{i,j}),  \nabla F^{i,j}) = k |\nu (\phi) \Delta F^{i,j})|^2
        \\&
        \geq \alpha k |\Delta F^{i,j}|^2
    \end{aligned}
\end{equation*}
To estimate $|\nabla \phi|_1$, the inequality \eqref{gradphi_H1} was used. And,
\begin{equation*}
    \begin{aligned}
        &- 2k (\nabla (\nabla \nu(\phi) \cdot \nabla F^{i,j}),  \nabla F^{i,j}) 
        \\& = - 2k ( (\nabla^2 \nu(\phi)\cdot \nabla F^{i,j}),  \nabla F^{i,j}) - 2k (\nu'(\phi) \sum_{r}(\partial_r \phi \partial_r \nabla F^{i,j}),  \nabla F^{i,j})
        \\&
        \leq
        C |\nabla^2 \nu(\phi)||\nabla F^{i,j}|_{L^3}  |\nabla F^{i,j}|_{L^6} 
        + 
        C \sum_{r}|\nabla \phi |_{L^3}|\nabla^2 F^{i,j}||  \nabla F^{i,j}|_{L^6}
        \\&
        \leq
        C( |\Delta \phi|^4 + |\Delta \phi|^2 +|F|_2^4) +C(|\Delta \phi|^2 + |F|_2^4)
    \end{aligned}
\end{equation*}
Here, we used \eqref{gradphi_H1}. Also by using this inequality, we applied the fact that $|\nabla^2 \nu(\phi) |\leq \beta (|\nabla \phi \cdot \nabla \phi| + |\Delta \phi|  ) \leq C (|\Delta \phi |^2 + |\Delta \phi| )$ for the last inequality. 
\begin{equation*}
    \begin{aligned}
        & | (\nabla (u \cdot \nabla F)^{i,j}, \nabla F^{i,j}) |
        =   |( (u \cdot \nabla F)^{i,j}, \Delta F^{i,j})|
        \\&
        =   |( (u \cdot \nabla F^{i,j}), \Delta F^{i,j})|
        \\&
        \leq C( |u |_1^2 + | F^{i,j}|_2^4)
    \end{aligned}
\end{equation*}
For the right hand side term in the test,
\begin{equation*}
    \begin{aligned}
        & (\nabla (\nabla u F)^{i,j}, \nabla F^{i,j})
        = (\nabla \sum_{k}( \partial_k u^i F^{kj}), \nabla F^{i,j})
        \\&
        =   |( \sum_{k} \partial_k u^i F^{kj}, \Delta F^{i,j})|
        \\&
        \leq {  \frac{\alpha}{10}|u |_2^2} +C( |u |_1^2 + | F^{i,j}|_2^2 \sum_{k}| F^{k,j}|_1^2)
    \end{aligned}
\end{equation*}
By adding all the component together, we get
\begin{equation}\label{2_37'}
\begin{aligned}
\frac{1}{2} \frac{d}{dt} |\nabla F|^2 + \alpha k | \Delta F|^2 \leq {  \frac{\alpha}{10}|u|_2^2} &+ C(1+Z)^2.
\end{aligned}
\end{equation}

{  Similarly to the previous steps, test $\eqref{governing_system}_3$ with $ F$.} We can write this down as
\begin{equation}\label{}
\begin{aligned}
( F_t , F) - &k ( \nu (\phi) \Delta F,   F) - 2k ( \nabla \nu(\phi) \cdot \nabla F,   F)+ ( u \cdot \nabla F,  F) 
\\
& = ( \nabla u F,  F) .
\end{aligned}
\end{equation}
This follows the Estimates for each term as below.
\begin{equation*}
    \begin{aligned}
        & -k ( \nu (\phi) \Delta F,   F)
        \geq 
        -k \beta ( \Delta F,   F)
        \\&
        = \beta k \sum_{i,j}|\nabla F^{i,j}|^2
    \end{aligned}
\end{equation*}
and,
\begin{equation*}
    \begin{aligned}
        & |2k ( \nabla \nu(\phi) \cdot \nabla F,   F)|
        \leq 
        \beta k |\nabla \phi|\sum_{i,j}|\nabla F^{i,j}|_1|F|_1
        \\&
        \leq C( |\nabla \phi|^2 +| F |_2^4)
    \end{aligned}
\end{equation*}
Also,
\begin{equation*}
    \begin{aligned}
        & |( u \cdot \nabla F,  F)| + |( \nabla u F,  F)|
        \leq 
          |u|\sum_{i,j}|\nabla F^{i,j}||F|_{L^{\infty}} + |\nabla u||F|_{L^{3}}|F|_{L^{6}}
        \\&
        \leq C( |u|^2 +| F |_2^4 + |u|_1^2 +| F |_1^4)
    \end{aligned}
\end{equation*}
Therefore, we get
\begin{equation}\label{2_37''}
\begin{aligned}
\frac{1}{2} \frac{d}{dt} | F|^2 +{\beta k} \sum_{i,j}|\nabla F^{i,j}|^2  \leq &C(1+Z)^2.
\end{aligned}
\end{equation}
Combining the equation \eqref{2_37}, \eqref{2_37'} and \eqref{2_37''} allows
\begin{equation}\label{2_38}
\begin{aligned}
    &\frac{1}{2} \frac{d}{dt} (|\Delta F|^2+\sum_{i,j}|\nabla F^{i,j}|^2+| F|^2) +\frac{\alpha k}{4} \sum_{i,j}|\nabla \Delta F^{i,j}|^2+ \frac{\alpha k}{4} |\Delta F|^2+\frac{\beta k}{4} \sum_{i,j}|\nabla F^{i,j}|^2 
\\&\leq \frac{\alpha}{10} |u_t|_1^2 + C |u|_3 |F_t|^2 + C (1 + \mathcal{Z})^3.
\end{aligned}
\end{equation}
\par
Now we combine \eqref{2_31}, \eqref{2_32}, \eqref{2_33}, \eqref{est_F_t}, \eqref{2_34} and \eqref{2_38} to obtain,
\begin{equation}\label{2_39}
\begin{aligned}
&\frac{1}{2} \frac{d}{dt} \left( |\nabla u|^2 + |\nabla u_t|^2 + |\Delta \phi|^2 + |\nabla \phi_t|^2 + |\Delta F|^2+\sum_{i,j}|\nabla F^{i,j}|^2+| F|^2+\sum_{i,j}|\nabla F_t^{i,j}|^2+ |F_t|^2 \right) 
\\
&+ \frac{\alpha}{2} |u|_2^2 + \frac{\alpha}{2} | u_t|_2^2 + \frac{\tau \lambda}{2} |\Delta^2 \phi|^2 + \frac{\tau \lambda}{2} |\nabla \Delta \phi_t|^2
\\&
{  +\frac{\alpha k}{4} \sum_{i,j}|\nabla \Delta F^{i,j}|^2+ \frac{\alpha k}{4} |\Delta F|^2+\frac{\beta k}{4} \sum_{i,j}|\nabla F^{i,j}|^2 
+ \frac{\beta k}{2} \int_\Omega \sum_{i,j= 1,2} (\nabla F_t^{ij})^2 dx
 +  \frac{\beta k}{4} |\Delta F_t|^2 }
\\
&\leq C |u|_3{  (|F|_2^2 + |F_t|^2)} + C(1 + Z)^3 
\end{aligned}
\end{equation}
\\
We used \eqref{u_H2} and \eqref{Au_decomposition} - \eqref{p*} here for obtaining $|u|_2^2$ term on the left hand side above.
For the formulas, with $\mathcal{Z}$ as defined in (2.29), we have
\begin{equation}\label{2_40}
\frac{1}{2} \frac{d}{dt} \mathcal{Z} + \mathcal{M} \leq C_0 |u|_3 \mathcal{Z} + C (1 + \mathcal{Z})^3, \tag{2.40}
\end{equation}
where $C_0 > 0$, and
\begin{equation}
\begin{aligned}
\mathcal{M} &:= \frac{\alpha}{2} |u|_2^2 + \frac{\alpha}{2} | u_t|_2^2 + \frac{\tau \lambda}{2} |\Delta^2 \phi|^2 + \frac{\tau \lambda}{2} |\nabla \Delta \phi_t|^2
\\&
{  +\frac{\alpha k}{4} \sum_{i,j}|\nabla \Delta F^{i,j}|^2+ \frac{\alpha k}{4} |\Delta F|^2+\frac{\beta k}{4} \sum_{i,j}|\nabla F^{i,j}|^2 
+ \frac{\beta k}{2} \int_\Omega \sum_{i,j= 1,2} (\nabla F_t^{ij})^2 dx
 +  \frac{\beta k}{4} |\Delta F_t|^2 }.
\end{aligned}
\end{equation}
\\
Now, we will close the result of the a priori estimate \eqref{2_40} with terms of $Z$, there remains making upper bound for the term $|u|_3$. This will be done by using Lemma \ref{lemma_u} on equation \eqref{governing_system}$_1$ as follows.
\begin{align*}
&|u|_3 + \left| \left( \frac{p}{\eta(\phi)} \right) \right|_1 \leq C \left[ 1 + (1 + |\phi|_2^2) \left( |\phi|_2^2 + |\phi|_2^{1/2} |\phi|_3^{1/2} \right) \right] 
( |u_t|_1 + |u \cdot \nabla u|_1 
\\
&\quad + |\Delta \phi \nabla \phi|_1 +|\nabla \| \nabla \phi\|^2|_1+ |\nabla \cdot (\lambda_e(\phi)(FF^T - I))|_1 
 + \left| \eta(\phi) \frac{(1 - \phi) u}{\kappa(\phi)} \right|_1 ).
\end{align*}
While applying the Lemma, we used \eqref{2_17} to get above.
\\
Each of the terms can be estimated as follows.
\begin{align*}
|u \cdot \nabla u|_1 
&\leq 
| u|_{L^6}|\nabla u|_{L^3} + |\nabla u|_{L^3}|\nabla u|_{L^6}+ |u|_{L^6}|\Delta u|_{L^3}
\leq |u|_1 | \nabla u|^{\frac{1}{2}}|| u|_2^{\frac{1}{2}}+ | \nabla u|^{\frac 12}| u|_2^{\frac 32}+ | u|_1| u|_2^{\frac{1}{2}} | u|_3^{\frac{1}{2}}.
\end{align*}
And, 
\begin{equation}
    \begin{aligned}
        &|\text{div}(\nu(\phi)(FF^T - I))|_1 \leq
        |\nu'(\phi)\nabla \phi (FF^T - I)|_1 + |\nu(\phi) \nabla (FF^T - I)|_1
        \\
        &\leq \beta (|\nabla \phi(FF^T - I)|+ |\nabla^2 \phi(FF^T - I)|+ |\nabla \phi \sum_{i,j}\nabla (F^{ij}F^{ij} )|) + \beta| \nabla FF^T|_1
        \\&
        \leq 
        C(|\nabla \phi|_{L^3}|FF^T -I|_{L^6} + |\nabla^2 \phi||FF^T-I|_{L^\infty} 
        + |\nabla \phi|_{L^\infty}|\sum_{i,j}\nabla (F^{ij}F^{ij} )| +|F|_2^2)
        \\&
        \leq 
        C(|\phi|_2|FF^T -I|_1 + | \phi|_2|FF^T-I|_2 
        + |\phi|_3|F|_2^2 +|F|_2^2)
        \\&
        \leq C(|\phi|_2(|F|_2^2+1) + | \phi|_2(|F|_2^2 +1) 
        + |\phi|_3|F|_2^2 +|F|_2^2).
    \end{aligned}
\end{equation}
Also using Galiardo-nirenberg inequality $|\phi|_3 \leq C |\phi|_2^{\frac 12} |\phi|_4^{\frac 12}$ enables
\begin{equation}
    \begin{aligned}
        |\Delta \phi \nabla \phi|_1 \leq C (|\Delta \phi|_{L^3} +|\nabla \Delta \phi|_{L^3} ) (|\nabla \phi|_{L^6} + |\nabla^2 \phi|_{L^6} )\leq C |\phi|_3^{\frac 12} |\phi|_4^{\frac 12}|\phi|_3
        \leq C |\phi|_2^{\frac 34} |\phi|_4^{\frac 54}
    \end{aligned}
\end{equation}
and, by using Galiardo-nirenberg again,
\begin{equation}
    \begin{aligned}
        &| \nabla \| \nabla \phi \|^2|_1 = |\nabla \phi \cdot \nabla^2 \phi|_1
        \\& 
        \leq
        (|\nabla \phi|_{L^6}+|\Delta \phi|_{L^6})(|\nabla^2 \phi|_{L^3}+|\nabla \cdot \nabla^2 \phi|_{L^3})
        \\&\leq
        C| \phi|_3|\phi|_4
        \\&\leq
        C| \phi|_2^{\frac 12}|\phi|_4^{\frac 32}.
    \end{aligned}
\end{equation}
Lastly,
\begin{equation}
    \begin{aligned}
        &\left| \eta(\phi) \frac{(1 - \phi) u}{\kappa(\phi)} \right|_1 
        \\&\leq
        \left|( \frac{\eta(\phi)}{\kappa(\phi)} + \nabla \frac{\eta(\phi)}{\kappa(\phi)})(1 - \phi) u \right| +  \left| \frac{\eta(\phi)}{\kappa(\phi)}\nabla ((1 - \phi) u )\right| + C |(1 - \phi) u|
        \\&\leq
        \left| \frac{\eta(\phi)}{\kappa(\phi)}(1 - \phi) u + \nabla \frac{\eta(\phi)}{\kappa(\phi)}(1 - \phi) u \right|+  \left| \frac{\eta(\phi)}{\kappa(\phi)}\nabla ((1 - \phi) u )\right|  + C |(1 - \phi) u|
        \\&\leq
        C|(1 - \phi)|_{L^3}| u |_{L^6}+ C |\nabla \phi|_{L^6} |1 - \phi|_{L^6} |u|_{L^6} + C(|\nabla \phi|_{L^3}| u|_{L^6}+|1- \phi|_{L^{\infty}}|\nabla u|)+ C |(1 - \phi) u|
        \\&
        \leq
        C(1+|\phi|_1)|u|_1 + C|\phi|_2(1+|\phi|_1)|u|_1 + C(1+|\phi|_2|u|_1) .
    \end{aligned}
\end{equation}
As a result on estimating $|u|_3$, after combining all terms above and using Gagliardo-Nirenberg inequality $|\phi|_3^{\frac 12} \leq |\phi|_2^{\frac 14}|\phi|_4^{\frac 14}$, we get
\begin{equation}\label{u3_est}
\begin{aligned}
|u|_3 &\leq C \Big[ 1 + (1 + |\phi|_2^2) (|\phi|_2^2 + |\phi|_2^{3/4} |\phi|_4^{1/4})\Big] \Big[|u_t|_1 +|u|_1 | \nabla u|^{\frac{1}{2}}|| u|_2^{\frac{1}{2}} { + | \nabla u|^{\frac 12}| u|_2^{\frac 32}}+ | u|_1| u|_2^{\frac{1}{2}} | u|_3^{\frac{1}{2}}
\\&+|\phi|_2(|F|_1^2+1) + | \phi|_2(|F|_2^2 +1) 
 + |\phi|_3|F|_2^2 +|F|_2^2 + { |\phi|_2^{\frac 34} |\phi|_4^{\frac 54} + | \phi|_2^{\frac 12}|\phi|_4^{\frac 32} }
 \\&+ (1+|\phi|_1)|u|_1 + |\phi|_2(1+|\phi|_1)|u|_1 + (1+|\phi|_2|u|_1)\Big]
\end{aligned}
\end{equation}
And in this inequality, we can separate $|u|_3$ in the right hand side as
\begin{equation}
\begin{aligned}
|u|_3 &\leq 
\frac{1}{2}| u|_3+C
| u|_1^2| u|_2 \Big[ 1 + (1 + |\phi|_2^4) (|\phi|_2^4 + |\phi|_2^{3/2} |\phi|_4^{1/2})\Big] 
\\&
+C \Big[ 1 + (1 + |\phi|_2^2) (|\phi|_2^2 + |\phi|_2^{3/4} |\phi|_4^{1/4})\Big] (|u_t|_1 +|u|_1 | \nabla u|^{\frac{1}{2}}|| u|_2^{\frac{1}{2}} { + | \nabla u|^{\frac 12}| u|_2^{\frac 32}}
\\&+|\phi|_2(|F|_1^2+1) + | \phi|_2(|F|_2^2 +1) 
 + |\phi|_3|F|_2^2 +|F|_2^2 + { |\phi|_2^{\frac 34} |\phi|_4^{\frac 54} + | \phi|_2^{\frac 12}|\phi|_4^{\frac 32} }
 \\&+ (1+|\phi|_1)|u|_1 + |\phi|_2(1+|\phi|_1)|u|_1 + (1+|\phi|_2|u|_1))
\end{aligned}
\end{equation}
Absorb the $|u|_3$ term of right hand side into the left hand side, and we will organize the right hand side of \eqref{2_39} with $\mathcal{Z}$ as
\begin{equation}\label{2_61}
\begin{aligned}
C_0 |u|_3 \mathcal{Z}^3 &\leq  \frac{\alpha}{8} |u|_2^2 + \frac{\alpha}{8} |u_t|_1^2 + \frac{\tau \lambda}{8 \widetilde{C}} |\phi|_4^2 + C(1 + \mathcal{Z})^{20}.
\end{aligned}
\end{equation}
With this, we continue the a priori estimate from the last inequality result \eqref{2_40}.
For the higher degree $\phi$ term on the right hand side, use Lemma \ref{lemma_phi} and \ref{bdry_Delphi} and then we can see
\begin{equation}\label{2_62}
\begin{aligned}
\frac{\tau \lambda}{8 \widetilde{C}} |\phi|_4^2 \leq \frac{\tau \lambda}{8} |\Delta^2 \phi|^2 + C (1 + |\nabla \phi| + |\nabla \phi|^2 + |\Delta \phi|)|F|_2^2.
\end{aligned}
\end{equation}
We now combine right above two inequalities \eqref{2_61} and \eqref{2_62} to apply them in \eqref{2_40}. Then the final form of the a priori estimate in this section is written as
\begin{equation}\label{2_45}
\frac{d}{dt} \mathcal{Z} + \mathcal{M} \leq C (1 + \mathcal{Z})^{20}.
\end{equation}
\\
\par
Next our aim is to close this inequality \eqref{2_45} with initial values for the solutions $u$, $\phi$ and $F$. For this, there are two bounds to be made for the left hand side. First, we focus on the first term of the left hand side by dropping the positive $\mathcal{M}$ in the inequality. Divide by $(1+\mathcal{Z})^{20}$ for both sides of \eqref{2_45} and integrate as
\begin{equation}\label{2_pde_integration}
\begin{aligned}
\int_{\mathcal{Z}(0)}^{\mathcal{Z}(t)} \frac{1}{(1+\mathcal{Z})^{20}}d\mathcal{Z} \leq \int_0^{t} C dt
\end{aligned}
\end{equation}
Then, with the assumption $t \leq \frac{1}{19C(1 + \mathcal{Z}(0))^{19}}$, the integration values are
\begin{equation}\label{2_65}
\begin{aligned}
(1 +  \mathcal{Z}(t))^{19} \leq \frac{1}{(1 + \mathcal{Z}(0))^{-19}-19Ct}
\end{aligned}
\end{equation}
Moreover, suppose that $t \leq \frac{2^{19}-1}{2^{19}} \cdot \frac{1}{19C(1 + \mathcal{Z}(0))^{19}}$ and this implies
\begin{equation}
\begin{aligned}
    &1 +  \mathcal{Z}(t) \leq \left( \frac{1}{(1 + \mathcal{Z}(0))^{-19}-19Ct} \right)^{1/19}
    \\&  \leq \left( \frac{1}{(1 + \mathcal{Z}(0))^{-19}-\frac{2^{19}-1}{2^{19}(1 + \mathcal{Z}(0))^{19}}} \right)^{1/19}
    \\& \leq 2 (1 + \mathcal{Z}(0))
\end{aligned}
\end{equation}
As a result, if $t \leq T_0$ where $T_0 := \frac{2^{19}-1}{2^{19} \cdot 19C(1 + \mathcal{Z}(0))^{19}}$, then
\begin{equation}\label{K1_0}
1 +  \mathcal{Z}(t) \leq 2 (1 + \mathcal{Z}(0)).
\end{equation}
\\
\par
Using \eqref{K1_0}, now, continue the estimate on the second term of the left hand side in \eqref{2_45}. Integrate whole inequality for $[0, T_0]$ and then we get
\begin{equation}\label{K2_0}
\begin{aligned}
\mathcal{Z}(t) + \int_0^{T_0} \mathcal{M}(t) dt &\leq 
 \mathcal{Z}(0)+ C ( 2(1 + \mathcal{Z}(0)))^{20}.
\end{aligned}
\end{equation}
Since Z(t) is positive, we can bound $\int_0^{T_0} \mathcal{M}(t) dt$ term with given initial value of $\mathcal{Z}$.
\\
\par
As we can see in two results in \eqref{K1_0} and \eqref{K2_0}, we can make them closed with the initial conditions as the upper bounds is composed of the terms of them.
With given initial conditions at $t = 0$ in the Theorem \ref{Thm_local_wp}, we now bound time-derivative terms in the $\mathcal{Z}(0)$. Take $\nabla$ on $\eqref{governing_system}_1$ and test with $\nabla u_t$ and substitute $t = 0$ as below.
\begin{equation}\label{u_t0}
\begin{aligned}
| u_t(0)|_1 &\leq |u_0 \cdot \nabla u_0|_1 + |\nabla p_0|_1 + |\nabla \eta(\phi_0) \cdot \nabla u_0|_1 
\\&+ C| \Delta u_0|_1 +C| \Delta \nabla \phi_0 |_1 + C| \nabla (\nabla \phi_0 \cdot \nabla \phi_0)|_1
\\&+ C |\nabla \nu(\phi_0) (F_0 F_0^T - I)|_1  + C | \nabla \cdot (F_0 F_0^T)|_1  + C | \frac{\eta(\phi)}{\kappa(\phi)}(1 - \phi_0) u_0 |_1
\\& \leq |u_0|_2|u_0|_3 + |u_0|_3 + |\nabla \eta(\phi_0) \cdot \nabla u_0|_1 
\\& + |u_0|_3 + |\phi_0|_4 + |\phi_0|_4^2 
\\&+ C |\nabla \nu(\phi_0) (F_0 F_0^T - I)|_1  + C | \nabla \cdot (F_0 F_0^T)|_1  + C | \frac{\eta(\phi)}{\kappa(\phi)}(1 - \phi_0) u_0 |_1. 
\end{aligned}
\end{equation}
We used \eqref{Au_decomposition} to estimate $|\nabla p_0|_1$. To estimate other terms in above, the following inequalities are needed.
\begin{equation}\label{u_t01}
    \begin{aligned}
        |\nabla \eta(\phi_0) \cdot \nabla u_0|_1 & \leq |\nabla \eta(\phi_0) |_{L^6}|\nabla u_0|_{L^3} + |\eta'(\phi_0) \nabla \phi_0|| \nabla u_0|_{L^{\infty}} + |\nabla \eta(\phi_0)|_{L^{\infty}}|\nabla^2 u_0|
        \\& \leq 
        |\phi_0|_2|u_0|_2 + C|\phi_0|_1|u_0|_3 + C|\phi_0|_3|u_0|_2,
    \end{aligned}
\end{equation}
and
\begin{equation}\label{u_t02}
    \begin{aligned}
        |\nabla \nu(\phi_0) (F_0 F_0^T - I)|_1  \leq &
        |\nabla \nu(\phi_0)||F_0 F_0^T - I|_{L^{\infty}} + (|\nu''(\phi_0) |_2|\nabla \phi_0|_1|\nabla \phi_0|_1 + |\nabla^2 \phi_0|_1 )|F_0 F_0^T - I|_1 
        \\&+ |\nu'(\phi_0) \nabla \phi_0 |_1\sum_{i,j}|\nabla F_0^{i,j} (F_0^{i,j})^T |_1
        \\& \leq
        C(
        |\phi_0|_1(|F_0|_2^2 +1) + (|\phi_0|_2|\phi_0|_2 + |\phi_0|_3 )(|F_0|_2^2 +1 ) 
        + |\phi_0 |_2 |F_0|_2^2
        ).
    \end{aligned}
\end{equation}
Also,
\begin{equation}\label{u_t03}
    \begin{aligned}
| \frac{\eta(\phi)}{\kappa(\phi)}(1 - \phi_0) u_0 |_1
&\leq | \frac{\eta(\phi)}{\kappa(\phi)}(1 - \phi_0) u_0 |
+ | \frac{\eta'(\phi_0)\nabla \phi_0 \kappa(\phi_0)- \eta(\phi_0)\kappa'(\phi_0) \nabla \phi_0}{\kappa^2(\phi)}||1 - \phi_0|_{L^{\infty}}| u_0 |_{L^{\infty}}
\\& + | \frac{\eta(\phi)}{\kappa(\phi)}|_{L^{\infty}}| \nabla \phi_0 |_1|u_0 |_1
+ | \frac{\eta(\phi)}{\kappa(\phi)}|_{L^{\infty}}|1 - \phi_0 |_1|\nabla u_0 |_1
\\&
\leq C(
(1 + |\phi_0|_1) |u_0|_1 + |\phi_0 |_1(1 + |\phi_0|_2)| u_0 |_2
 + | \phi_0 |_2|u_0 |_1
+ (1 + |\phi_0 |_1)| u_0 |_2
)
    \end{aligned}
\end{equation}
We now apply \eqref{u_t01} - \eqref{u_t03} to continue the estimate of \eqref{u_t0} as
\begin{equation}\label{u_t04}
\begin{aligned}
| u_t(0)|_1 &\leq  |u_0|_2|u_0|_3 + |u_0|_3 + |\phi_0|_2|u_0|_2 + C|\phi_0|_1|u_0|_3 + C|\phi_0|_3|u_0|_2
\\& + |u_0|_3 + |\phi_0|_4 + |\phi_0|_4^2 
\\&+ C (|\phi_0|_1(|F_0|_2^2 +1) + (|\phi_0|_2|\phi_0|_2 + |\phi_0|_3 )(|F_0|_2^2 +1 ) 
        + |\phi_0 |_2 |F_0|_2^2) 
\\& + C | F_0 |_2^2  
\\& + C(
(1 + |\phi_0|_1) |u_0|_1 + |\phi_0 |_1(1 + |\phi_0|_2)| u_0 |_2
 + | \phi_0 |_2|u_0 |_1
+ (1 + |\phi_0 |_1)| u_0 |_2
). 
\end{aligned}
\end{equation}
By applying \eqref{poincare_u}, this gives bounds for $|u_t(0)|$ and $|u_t(0)|_1$ at once with given conditions for the terms $u_0$, $\phi_0$ and $F_0$ in Theorem \ref{Thm_local_wp}.\\
\par
To bound $|\phi_t(0)|_1$, 
\begin{align*}
|\phi_t(0)|_1 &\leq |u_0 \cdot \nabla \phi_0|_1 + \tau(|\lambda \Delta^2 \phi_0|_1 + |\lambda \gamma \Delta f'(\phi)|_1 
+ |\frac{\nu (\phi)}{2} \text{tr}(FF^T-I)|_1
\\&\leq
C (|u_0|_1|\phi_0|_2 + |\phi_0|_5
+|\phi_0|_2^2+|\phi_0|_3
+|\phi_0|_2|F_0|_2^2
)
\end{align*}
Then we can bound $|\phi_t(0)|$ with given conditions for the terms $u_0$, $\phi_0$ and $F_0$ in Theorem \ref{Thm_local_wp}.\\
\\
Similarly, test $\eqref{governing_system}_3$ with $F_t$ and take $t = 0$ as
\begin{align*}
|F_t(0)|^2 &\leq |k(\nu(\phi)\Delta F_0, F_t(0))|+ |2k(\nabla \nu(\phi) \cdot \nabla F_0, F_t(0))| +|(u_0 \cdot \nabla F_0, F_t(0))| + |(\nabla u_0 F_0),F_t(0)| \\
&\leq \beta k |\Delta F_0||F_t(0)|+ 2k|\nabla \nu(\phi)|_1 | \nabla F_0|_1| F_t(0)| +|u_0 |_2| \nabla F_0|| F_t(0)| + |\nabla u_0|| F_0|_2|F_t(0)| \\
&\leq 
\frac{1}{2}|F_t(0)|^2 + C ( |F_0|_2^2+ |\phi|_1^2 |  F_0|_2^2 +|u_0 |_2^2|  F_0|_1^2 + | u_0|_1^2| F_0|_2^2 ).
\end{align*}
Move the $|F_t(0)|$ term to the left hand side in above, then $|F_t(0)|$  is bounded with given conditions for the terms $u_0$, $\phi_0$ and $F_0$ in Theorem \ref{Thm_local_wp}.\\
And, take $H^1(\Omega)$ norm on the governing system $\eqref{governing_system}_3$ and we can get the same bound results coming directly from Theorem \ref{Thm_local_wp}.
\begin{equation}\label{Ft0_est}
    \begin{aligned}
        |F_t(0)|_1 & \leq  k |\nu (\phi) \Delta F_0 |_1+2k |\nabla \nu (\phi) \cdot \nabla F_0 |_1 + |u \cdot \nabla F_0 |_1+ |\nabla u F_0|_1
        \\& \leq C( 
        |\phi |_3| F_0 |_3 + |\phi |_3| F_0 |_2 + |u |_2| F_0 |_2+ | u|_2| F_0|_2
        )
    \end{aligned}
\end{equation}
\par
With above bounds for $|u_t(0)|_1$, $|\phi_t(0)|_1$, $|F_t(0)|$ and $|F_t(0)|_1$, in inequalities \eqref{K1_0} and \eqref{K2_0}, the left hand sides are bounded with the constants which depend only on the initial data as set in Theorem \ref{Thm_local_wp}. Let us now summarize these results with the constants.
\\
Therefore, there exists coefficient $K_1 > 0$ s.t.
\begin{equation}\label{K_1}
|\nabla u|^2 + |\nabla u_t|^2 + |\Delta \phi|^2 + |\nabla \phi_t|^2 + |\Delta F|^2+\sum_{i,j}|\nabla F^{i,j}|^2+| F|^2+\sum_{i,j}|\nabla F_t^{i,j}|^2+ |F_t|^2 \leq K_1, \quad \forall t \in [0, T_0]
\end{equation}
where $K_1$ is constant depending only on given initial data in Theorem \ref{Thm_local_wp} and coefficients of Sobolev interpolation inequalities we used. 
Integrated functional in \eqref{K2_0} over $[0, T_0]$ with bound on $\mathcal{Z}(0)$ brings that there is constant $K_2 > 0$ s.t.
\begin{equation}\label{K_2}
\begin{aligned}
\int_0^{T_0} &\left( \right. \frac{\alpha}{2} |u|_2^2 + \frac{\alpha}{2} | u_t|_2^2 + \frac{\tau \lambda}{2} |\Delta^2 \phi|^2 + \frac{\tau \lambda}{2} |\nabla \Delta \phi_t|^2
\\&
+\frac{\alpha k}{4} \sum_{i,j}|\nabla \Delta F^{i,j}|^2+ \frac{\alpha k}{4} |\Delta F|^2+\frac{\beta k}{4} \sum_{i,j}|\nabla F^{i,j}|^2 
+ \frac{\beta k}{2} \int_\Omega \sum_{i,j= 1,2} (\nabla F_t^{ij})^2 dx
 +  \frac{\beta k}{4} |\Delta F_t|^2 \left. \right) dt 
 \\&
 \leq K_2.
 \end{aligned}
\end{equation}
where $K_2$ is depending only on the factors the $K_1$ depends.
\\
\par
Moreover, since the left hand side of \eqref{2_61} is bounded by $\mathcal{Z}$ and $\mathcal{M}$, applying \eqref{2_62} and \eqref{K_1} - \eqref{K_2} gives the constant $K_3 > 0$ which satisfies the following inequality by depending only on the the factors the $K_1$, $K_2$ depend.
\begin{equation}\label{K_3}
|u|_{L^2(0,T_0;H^3(\Omega))} \leq K_3.
\end{equation}

Let us finalize the two inequalities \eqref{K_1} - \eqref{K_3} as one inequality with a constant $K_4>0$ satisfying for any $t$ in $[0, T_0]$,
\begin{equation}\label{K_4}
\begin{aligned}
&|\nabla u|^2 + |\nabla u_t|^2 + |\Delta \phi|^2 + |\nabla \phi_t|^2 + |\Delta F|^2+\sum_{i,j}|\nabla F^{i,j}|^2+| F|^2+\sum_{i,j}|\nabla F_t^{i,j}|^2+ |F_t|^2 
\\&
    +\int_0^{T_0} \left( \right.
     \frac{\alpha}{2} |u|_3^2 + \frac{\alpha}{2} | u_t|_2^2 + \frac{\tau \lambda}{2} |\Delta^2 \phi|^2 + \frac{\tau \lambda}{2} |\nabla \Delta \phi_t|^2
\\&
 +\frac{\alpha k}{4} \sum_{i,j}|\nabla \Delta F^{i,j}|^2+ \frac{\alpha k}{4} |\Delta F|^2+\frac{\beta k}{4} \sum_{i,j}|\nabla F^{i,j}|^2 
+ \frac{\beta k}{2} \int_\Omega \sum_{i,j= 1,2} (\nabla F_t^{ij})^2 dx
 +  \frac{\beta k}{4} |\Delta F_t|^2 
   \left. \right) dx 
\\&\leq K_4.
\end{aligned}
\end{equation}
The constant $K_4$ depends only on the factors the $K_1$ depends.\\
Remember that the constants $K_i$ $(i = 1, 2, 3, 4)$ are independent on the dimension of the finite subspace in the Galerkin scheme. Let us develop theses finite subspaces by passing the dimension to limit in the next section.
\\
\par

\section{Galerkin scheme}\label{section_Galerkin}

From the foundation of definitions in chapter \ref{Galerkin_def} for the Galerkin Scheme, the existence of the strong solution of the \eqref{governing_system} will be proved. 
\\
\par
\textbf{Proof of existence of Theorem \ref{Thm_local_wp}.}
For $v \in V_n^1$, $\psi \in V_n^2$ and $\Xi \in V_n^3$, we construct the approximating equations of $u_n \in V^1_n$, $\phi_n \in V^2_n$ and $F_n \in V^3_n$ as
\begin{equation}\label{weakform}
\begin{array}{l}
\left\langle \frac{\partial u_n}{\partial t}, v \right\rangle + (u_n \cdot \nabla u_n, v) + (\eta(\phi_n) \nabla u_n, \nabla v) = (\lambda_e(1 - \phi_n)(F_n F_n^T - I), \nabla v) \\[5pt]
\quad + \lambda(\nabla \phi_n \otimes \nabla \phi_n, \nabla v) + \left(\eta(\phi_n) \frac{(1 - \phi_n)u_n}{\kappa(\phi_n)}, v\right), \\[5pt]
\left\langle \frac{\partial \phi_n}{\partial t}, \psi \right\rangle + (u_n \cdot \nabla \phi_n, \psi) + \tau(\nabla \mu_n, \nabla \psi) = 0, \\[5pt]
\left\langle \frac{\partial F_n}{\partial t}, \Xi \right\rangle +k( \nu(\phi_n) \Delta F_n -2 \nu(\phi_n) \cdot \nabla F_n, \Xi)+ (u_n \cdot \nabla F_n, \Xi) = (\nabla u_n F_n, \Xi).
\end{array}
\end{equation}
For the $\mu_n$, we approximate as writing on
\begin{equation*}
\mu_n := -\lambda \Delta \phi_n + \lambda \gamma f'(\phi_n) + \frac{\nu'(\phi_n)}{2} \text{tr}(F_n F_n^T - I). 
\end{equation*}

We prove the existence of $u_n$, $\phi_n$ and $F_n$ as we use the standard method, which solve \eqref{weakform} on time $0 < t_n \leq T$ for some chosen $T$.
\\
\par
{ Consider the following elements for each $n$ of these finite subspaces,
\begin{equation}\label{galerkin}
\begin{array}{l}
u_n = \sum_{i=1}^n a_{in}(t) \omega_i, \\[5pt]
\phi_n = \sum_{i=1}^n b_{in}(t) e_i, \\[5pt]
F_n = \sum_{i=1}^n c_{in}(t)M_i.
\end{array}
\end{equation}
where initial data $(u_n(0), \phi_n(0), F_n(0))$ is given as $lim_{n \rightarrow \infty}(u_n(0), \phi_n(0), F_n(0)) = (u_0, \phi_0, F_0) $ and $a$, $b$, $c$ are scalar function on the time domain [0,T]. When we insert $u_n, \phi_n, F_n$ in the \eqref{weakform}, the terms except $\partial_t$ in each equation are locally lipschitz because they are projected functions by $\mathbf{P}_n^1, \mathbf{P}_n^2$ and $\mathbf{P}_n^3$. Therefore, by Cauchy-Lipschitz theorem for ODE system, these are the approximating solution of the \eqref{weakform} in local time interval. \\
\\
Meanwhile, from the a priori estimate results \eqref{K_1}- \eqref{K_4}, together with the $L^2(\Omega)$-bound for $F$ obtained from the energy dissipation law of \eqref{Total_Energy}
\begin{equation}\label{bounds}
    \begin{split}
u_n &\text{ is bounded in } L^{\infty}(0, T_0; V) \cap L^2(0, T_0; H^3(\Omega)^d) \text{ independently of } n,\\
\phi_n &\text{ is bounded in } L^{\infty}(0, T_0; H^2(\Omega)) \cap L^2(0, T_0; H^4(\Omega)) \text{ independently of } n, \\
F_n &\text{ is bounded in } L^{\infty}(0, T_0; H^2(\Omega)^{d \times d})\cap L^2(0, T_0; H^3(\Omega)^{d \times d})\text{ independently of } n,
\end{split}
\end{equation}
and,
\begin{equation*}
\begin{aligned}
(u_n)_t &\text{ is bounded in } L^{\infty}(0, T_0; V) \cap L^2(0, T_0; D(A)) \text{ independently of } n, \\
(\phi_n)_t &\text{ is bounded in } L^{\infty}(0, T_0; H^1(\Omega)) \cap L^2(0, T_0; H^3(\Omega)^{d \times d}) \text{ independently of } n, \\
(F_n)_t &\text{ is bounded in } L^{\infty}(0, T_0; H^1(\Omega)^{d \times d})\cap L^2(0, T_0; H^2(\Omega)) \text{ independently of } n.
\end{aligned}
\end{equation*}
{ For the bounded sets in reflexive spaces $L^2(0,T_0; *)$ above, each closed ball has weak-converging subsequence. Also by Banach-Alaoglu theorem, the above uniform bounds in $L^{\infty} (0,T_0; *)$ implies weak-star sense convergences as follows.
$(u_n, \phi_n, F_n)$ in \eqref{bounds} converges to $(u, \phi, F)$, for some subsequence $n \to \infty$:}
\begin{equation}\label{aprioriresult2}
\begin{aligned}
u_n &\rightharpoonup u \text{ weak-* in } L^{\infty}(0, T_0; V) \text{ and weakly in } L^2(0, T_0; H^3(\Omega)^d ), \\
(u_n)_t &\rightharpoonup u_t \text{ weak-* in } L^{\infty}(0, T_0; V) \text{ and weakly in } L^2(0, T_0; D(A)), \\
u_n &\to u \text{ a.e. in }\Omega \times (0, T_0) \text{ and in }L^2([0,T];D(A)), 
\\
\phi_n &\rightharpoonup \phi \text{ weak-* in } L^{\infty}(0, T_0; H^2(\Omega)) \text{ and weakly in } L^2(0, T_0; H^4(\Omega)), \\
(\phi_n)_t &\rightharpoonup \phi_t \text{ weak-* in } L^{\infty}(0, T_0; H^1(\Omega)) \text{ and weakly in } L^2(0, T_0; H^3(\Omega)), \\
\phi_n &\to \phi \text{ a.e. in } \Omega \times (0, T_0) \text{ and in } L^2(0, T_0; H^{3 + \varepsilon}(\Omega)), \quad \forall \varepsilon \in [0, 1) , 
\\
F_n &\rightharpoonup F \text{ weak-* in } L^{\infty}(0, T_0; H^2(\Omega)^{d \times d}) \text{ and weakly in } L^2(0, T_0; H^3(\Omega)), \\
(F_n)_t &\rightharpoonup F_t \text{ weak-* in } L^{\infty}(0, T_0; H^1(\Omega)^{d \times d}) \text{ and weakly in } L^2(0, T_0; H^2(\Omega)), \\
F_n &\to F \text{ and a.e. in } \Omega \times (0, T_0) \text{ and in } L^2(0, T_0; H^{2 + \varepsilon}(\Omega)^{d \times d}), \quad \forall \varepsilon \in [0, 1)  .
\end{aligned}
\end{equation}
The Aubin-Lions compactness theorem was used here and the above space description $H^{2+\varepsilon}(\Omega)$ and $H^{3+\varepsilon}(\Omega)$ are denoted as interpolation space with $0 < \varepsilon < 1$(See \cite{t97}). To be specific, we describe the application of this theorem as follows.
\\\\
{ 
{Aubin-Lions compactness theorem} : $X_0 \subset X \subset X_1$ are Banach spaces. Suppose that $X_0$ is compactly embedded in $X$ and that $X$ is continuously embedded in $X_1$. For $1 \leq p,q \leq \infty,$ let
$$W = \{ u \in L^p ([0,T]; X_0) | u_t \in L^q([0,T];X_1)\}.$$
Then, the below statements follow.
\\
($i$) If $p < \infty $, then the embedding of $W$ into $L^p([0,T];X)$ is compact.\\
($ii$) If $p = \infty$ and $q > 1$, then the embedding of $W$ into $C([0,T];X)$ is compact.
\\
\par
Let us start from the assumptions we have as in \eqref{aprioriresult2}. From the bounds that $u_n \in L^2 (0,T_0;H^3(\Omega)^d \cap V)$ and $(u_n)_t  \in L^2 (0,T_0;V)$, by (i), the embedding of $W= \{u \in L^2([0, T];H^3(\Omega))\cap V| u_t \in L^2([0, T];D(A))\}$ with $p = 2, q = 2$ into $L^2 (0,T_0;D(A))$ is compact. Then $W$ is compact subset of $L^2([0,T];D(A))$. Hence, up to subsequence, there is $u_n \rightarrow u$ in $L^2 (0,T_0;D(A))$. This implies that there is a subsequence of $(u_n)$ such that a.e. $u_n \rightarrow u$ in $\Omega \times [0, T_0]$.
\\
\\
For the other lines to derive the \eqref{aprioriresult2} convergences, we can employ this theorem in the same way.
}
\\
\par
On \eqref{aprioriresult2} with a reference, using the Theorem 2.3 in reference \cite{lm72}, to obtain continuity in time results for $u, \phi, F$, we obtain the following:
\\Given two Hilbert spaces $X$ and $Y$, the space $W_{2,2} := \{v \in L^2(0,T;X), v_t \in L^2(0,T;Y)\}$ is continuously embedded in $C([0,T];[X,Y]_{\frac{1}{2}})$ with the definition $[X,Y]_{\frac{1}{2}}$ as the interpolation space of order $\frac{1}{2}$ of $X$ and $Y$. This yields
\begin{equation}\label{strong}
u \in C([0,T_0];D(A)), \quad \phi \in C([0,T_0]; H^3(\Omega)), \quad F \in C([0,T_0]; H^2(\Omega)^{d \times d}).
\end{equation}
Note that the norm of $D(A)$ is equivalent to that of $\mathbf{H}^2(\Omega)$ and $u$ takes values in $\mathbf{H}^2(\Omega)$.
\\
\par
Now, we will show the convergence of solution in the scheme \eqref{weakform} as $n$ goes to limit. Let us consider $\chi_i \in C^1(([0,T]),\mathbf{R})$ where $i = 1,2,3$ and $\chi_i(T) = 0 $. For the above system \eqref{weakform}, we multiply by $\chi_i = \chi_i(t) $, integrate over $[0,T]$ and obtain
\begin{equation}\label{eqn:sys.with.n.conv2}
	\left\{
    \begin{array}{ll}
	\int _0^T (  {u_n}(t), v \chi_1 '(t) ) dt + \int _0^T ( u_n(t) \cdot \nabla u_n  + \nabla p_n - \nabla \cdot (\eta({\phi_n }) \nabla u_n) , v \chi_1 (t) )   =(u_n(0), v\chi_1 (0))&
    \\    + \int_0^T ( - \lambda \nabla \cdot (\nabla {\phi }_n \otimes \nabla {\phi }_n ) + \nabla \cdot ({{\lambda}}_{e}  (1 - {\phi }_n) (F_n {{F_n}^T} - I ) )- \eta({\phi }_N)  \frac{(1- {\phi }_n) u_n}{\kappa ({\phi }_n)}, v \chi_1(t)) dt,&
    \\  (\nabla  \cdot  u_n , v) = 0,& 
    \\ \int _0^T  \left( {F_n}(t), \Xi \chi_2 ' (t)\right) dt + \int_0^T k\left(\nu(\phi_n)\Delta F_n - 2 \nabla \nu(\phi_n) \cdot F_n, \Xi \chi_2 (t)  \right) dt + \int_0^T \left(u_n \cdot \nabla F_n, \Xi \chi_2 (t)  \right) dt &
    \\ =
     (F_n(0), \Xi \chi_2 (0) ) + \int_0^T \left( \nabla u_n F_n, \Xi \chi_2(t) \right) dt,&
    \\ \int _0^T  \left(  {\phi_n}(t), \psi \chi_3 ' (t) \right) dt+ \int_0^T \left( u_n \cdot \nabla {\phi}_n, \psi \chi_3  (t) \right) dt  = 
   (\phi_n(0), \psi \chi_3 (0)) + \int_0^T ( \tau  \Delta {\mu}_n, \psi \chi_3(t)) dt ,&
    \\({\mu}_n, \psi) = ( - \lambda \Delta {\phi}_n + \lambda \gamma f'({\phi}_n) + \dfrac{\nu' (\phi_n)}{2} tr(F_n {F_n}^T -I), \psi ).&
    \end{array}
    \right.
\end{equation}
In above system, the first terms came from the integration by part for the time variable:
\begin{equation}\label{eqn:sys.with.n.conv3}
	    \begin{array}{ll}
\int _0^T \left\langle \frac{\partial {u_n}}{\partial t}, v \right\rangle  \chi_1(t)  dt  =  (u_n(T) v\chi_1(T)- u_n(0) v\chi_1(0))  - \int _0^T ({u_n} ,v )\chi_1'(t) dt
&\\
\qquad \qquad \qquad \qquad =  - u_n(0) v\chi_1(0)  - \int _0^T ( {u_n}(t), v \chi_1 '(t) ) dt . &\\
\end{array}
    \end{equation}
And the other first terms on each line are derived in the same way. As we pass $n\to\infty$, the linear terms in \eqref{eqn:sys.with.n.conv2} can be treated easily due to the weak convergence results stated in \eqref{aprioriresult2}. Thus we obtain

\begin{equation}\label{eqn:conveach1}
    \begin{split}
	 \int _0^T (  {u_n}(t), v \chi_1 '(t) ) dt \rightarrow \int _0^T (  {u}(t), v \chi_1 '(t) ) dt
    \\
    \int_0^T \left( {F_n}(t), \Xi \chi_2 ' (t)\right) dt \rightarrow \int_0^T \left( {F}(t), \Xi \chi_2 ' (t)\right) dt
    \\
    \int_0^T \left(  {\phi_n}(t), \psi \chi_3 ' (t) \right) dt \rightarrow \int_0^T \left(  {\phi}(t), \psi \chi_3 ' (t) \right) dt
    \end{split}
\end{equation}

weak convergent terms for $({u}_n, {\phi}_n, {F}_n)$ in $ (D(A) \cap H^3(\Omega)^d)\times H^4(\Omega) \times  (H^2(\Omega))^{d \times d}$ as $n \rightarrow \infty$. 
\\
\\
Due to \eqref{aprioriresult2}, we can show that
\begin{equation}\label{eqn:conveach2}
    \begin{split}
	(f_{1n} :=) \quad & - \nabla p_n + \nabla \cdot (\eta({\phi_n }) \nabla u_n)  - \lambda \nabla \cdot (\nabla {\phi }_n \otimes \nabla {\phi }_n ) 
    \\  & \qquad  +  \nabla \cdot ({{\lambda}}_{e}  (1 - {\phi }_n) (F_n {{F_n}^T} - I ) )- \eta({\phi }_n)  \frac{(1- {\phi }_n) u_n}{\kappa ({\phi }_n)}
    \\ &\qquad \rightarrow \quad - \nabla p + \nabla \cdot (\eta({\phi }) \nabla u)  - \lambda \nabla \cdot (\nabla {\phi } \otimes \nabla {\phi } ) 
    \\  &\qquad  \qquad  +  \nabla \cdot ({{\lambda}}_{e}  (1 - {\phi }) (F {{F}^T} - I ) )- \eta({\phi })  \frac{(1- {\phi }) u}{\kappa ({\phi })} \quad (=: f_1)
    \\ (f_{2n} :=) \quad &  \nabla  \cdot  u_n \rightarrow \nabla  \cdot  u\quad (=: f_2),
    \\ (f_{3n} :=) \quad  &  k(\nu(\phi_n) \Delta F_n - 2 \nabla \nu (\phi_n) \cdot \nabla F_n) +\nabla u_n F_n \rightarrow  k(\nu(\phi) \Delta F - 2 \nabla \nu (\phi) \cdot \nabla F) + \nabla u F \quad (=: f_3),
    \\ (f_{4n} :=) \quad  & - \lambda \Delta {\phi}_n + \lambda \gamma f'({\phi}_n) - \dfrac{{\lambda}_e}{2} tr(F_n {F_n}^T -I) 
    \\ \qquad  &\rightarrow - \lambda \Delta {\phi} + \lambda \gamma f'({\phi}) - \dfrac{{\lambda}_e}{2} tr(F {F}^T -I) \quad (=: f_4).
    \end{split}
\end{equation}
And,
\begin{equation}\label{eqn:conveach3}
    \begin{split}
    \int _0^T ( u_n(t) \cdot \nabla u_n  , v \chi_1 (t) ) dt  \rightarrow \int _0^T ( u(t) \cdot \nabla u  , v \chi_1 (t) ) dt 
    \\ \int_0^T \left(u_n \cdot \nabla F_n, \Xi \chi_2 (t)  \right) dt   \rightarrow \int_0^T \left(u \cdot \nabla F, \Xi \chi_2 (t)  \right) dt
    \\ \int_0^T \left( u_n \cdot \nabla {\phi}_n, \psi \chi_3  (t) \right) dt \rightarrow \int_0^T \left( u \cdot \nabla {\phi}, \psi \chi_3  (t) \right) dt.
    \end{split}
\end{equation}
Thus we arrive at the conclusion that the following equations are satisfied by $(u, \phi, F)$,
\begin{equation}\label{eqn:sysconv1}
	\left\{
    \begin{array}{ll}
    \int _0^T (  {u}(t), v \chi_1 '(t) ) dt + \int _0^T ( u(t) \cdot \nabla u  - \nabla \cdot (\eta({\phi }) \nabla u) , v \chi_1 (t) ) dt  =&
    \\[5pt] \qquad  (u_0, v\chi_1 (0)) + \int_0^T ( - \lambda \nabla \cdot (\nabla {\phi } \otimes \nabla {\phi } ) + \nabla \cdot ({{\lambda}}_{e}  (1 - {\phi }) (F {{F}^T} - I ) )&
    \\[5pt]  \qquad \qquad- \eta({\phi })  \frac{(1- {\phi }) u}{\kappa ({\phi })}, v \chi_1(t)) dt,&
    \\[5pt]  (\nabla  \cdot  u , v) = 0,& 
    \\[5pt] \int_0^T \left( {F}(t), \Xi \chi_2 ' (t)\right) dt + \int_0^T \left( k(\nu(\phi) \Delta F - 2 \nabla \nu (\phi) \cdot \nabla F), \Xi \chi_2  (t)\right) dt +\int_0^T \left(u \cdot \nabla F, \Xi \chi_2 (t)  \right) dt &
    \\= (F_0, \Xi \chi_2 (0) ) + \int_0^T \left( \nabla u F, \Xi \chi_2(t) \right) dt,&
    \\[5pt] \int_0^T \left(  {\phi}(t), \psi \chi_3 ' (t) \right) dt + \int_0^T \left( u \cdot \nabla {\phi}, \psi \chi_3  (t) \right) dt  = (\phi_0, \psi \chi_3 (0)) + \int_0^T ( \tau  \Delta {\mu}, \psi \chi_3(t)) dt ,&
    \\[5pt]({\mu}, \psi) = ( - \lambda \Delta {\phi} + \lambda \gamma f'({\phi}) + \dfrac{\nu'(\phi)}{2} tr(F {F}^T -I), \psi ).&
    \end{array}
    \right.
\end{equation}
for every $v \in (D(A) \cap H^3(\Omega)^d), \psi \in H^4(\Omega), \Xi \in (H^3(\Omega))^{d \times d}$ and $\chi_i\in C^1([0,T])$ for $i=1,2,3$. \\
\par
Now, for the weak solution ${u, \phi, F}$ proved in above, we can interpret \eqref{eqn:sysconv1} by  the following argument. From choosing the functions $\chi_1, \chi_2, \chi_3 \in $ $C_c^{\infty}(]0,1[)$, we obtain the following statement in the distribution sense on (0, T) directly from \eqref{eqn:sysconv1}.
\begin{equation}\label{weakhold}
    \begin{split}
&\text{To~find~the~functions}\\
& \qquad t \rightarrow (u(t), \phi (t),  F (t))\\
&  \text{~from~} [0, T] \text{~into~}  (D(A) \cap H^3(\Omega)^d) \times H^4(\Omega) \times  (H^3(\Omega))^{d \times d} \text{~satisfying}
    \end{split}
\end{equation}

\begin{equation*} 
	\left\{
		\begin{array}{lcl}
			  \left( \left( \frac{d}{dt} ({u}(t)) + u(t) \cdot \nabla u \right)  - \nabla \cdot (\eta({\phi }) \nabla u) , v \right) =& &
			  \\ ~~  \left( - \lambda \nabla \cdot (\nabla {\phi } \otimes \nabla {\phi } ) + \nabla \cdot ({{\lambda}}_{e}  (1 - {\phi }) (F {{F}^T} - I ) ) - \eta({\phi })  \frac{(1- {\phi }) u}{\kappa ({\phi })}, v\right),& &\\
			  (\nabla \cdot u , v) = 0, \\
			  \left( \frac{d}{dt} ({F}(t)), \Xi \right) +\left( k(\nu(\phi) \Delta F - 2 \nabla \nu (\phi) \cdot \nabla F),\Xi \right) + \left(u \cdot \nabla F, \Xi  \right) = \left( \nabla u F, \Xi  \right), && \\
			  \left( \frac{d}{dt} ({\phi}(t)) + u \cdot \nabla {\phi}, \psi \right) = ( \tau  \Delta {\mu}, \psi), && \\
			  ({\mu}, \psi) = ( - \lambda \Delta {\phi} + \lambda \gamma f'({\phi}) + \dfrac{\nu'(\phi)}{2} tr(F {F}^T -I), \psi ). &&\\
		\end{array}
		\right.
\end{equation*}
for every $v \in D(A) \cap H^3(\Omega)^d, \psi \in H^4(\Omega)$ and  $\Xi \in (H^3(\Omega))^{d \times d} $.
\\
\\
Moreover, for the weak solution $(u, F, \phi)$, we need to derive

$$(u(0), F(0), \phi(0)) = (u_0, F_0, \phi_0).$$

Since \eqref{weakhold} means that $(\dfrac{du}{dt}, \dfrac{dF}{dt}, \dfrac{d \phi}{dt})$ is weakly continuous from $(0,T)$ into $V \times (H^2(\Omega))^{d \times d} \times L^2(\Omega)$
, so $(u(0), F(0), \phi (0))$ can be well-defined from regularity on trace. For \eqref{weakhold} equation system, multiply $\chi_1, \chi_2, \chi_3 $, integrate on (0,T) and apply integration by parts. By subtracting this whole system from \eqref{eqn:sys.with.n.conv2} system equation, we get
\begin{equation}\label{eqn:initial}
	\left\{
    \begin{array}{ll}
    (u_0 - u(0), v)\chi_1 (0) = 0, \qquad \forall \chi_1 \in (H^3(\Omega))^d&
    \\(F_0 - F(0), \Xi) \chi_2 (0)  = 0,  \qquad  \forall \chi_2 \in  (H^2(\Omega))^{d\times d}&
    \\ ( \phi_0 - \phi(0), \psi )\chi_3 (0) = 0, \qquad \forall \chi_3 \in H^4(\Omega)&
    \end{array}
    \right.
\end{equation}
To be specific for the calculation to derive above \eqref{eqn:initial} from \eqref{weakhold}, we use similar computation as \eqref{eqn:sys.with.n.conv3}: 
\begin{equation}\label{}
    \begin{array}{ll}
\int_0^T (\dfrac{d}{dt} u(t), v \chi_1(t) )dt  = - (u(0), v \chi_1(0))  - \int_0^T (u(t), v \chi_1'(t) ) dt
    \end{array}
\end{equation}
 by integration by parts for the term in \eqref{eqn:sysconv1}. Then we obtain that this subtracted result terms as $-(u(0),v\chi_1 (0)) + (u_0, v\chi_1(0))$. With same calculation to get the second equation and third equation related to \eqref{eqn:initial}, we derive the following.

\begin{equation}\label{eqn:weakhold3}
    \begin{split}{}
    (u(0), F(0), \phi(0)) = (u_0, F_0, \phi_0).&
    \end{split}
\end{equation}
By choosing $\chi_1, \chi_2, \chi_3$ s.t. $\chi_1(0), \chi_2(0), \chi_3(0)$ are nonzero, we showed \eqref{eqn:weakhold3}.} Therefore, initial values are well-defined with given condition.
\\
\par
This completes the proof for the existence part of Theorem \ref{Thm_local_wp}.\qedsymbol
\\
\\
\textbf{Proof of uniqueness of Theorem \ref{Thm_local_wp}.} We describe the beginning step for proof on the uniqueness of strong solutions to the equations \eqref{governing_system}-\eqref{bdry} in Theorem \ref{Thm_local_wp}.
\\
Suppose that there are two solution sets $(u, \phi, F)$ and $(v, \psi, G)$ to \eqref{governing_system}-\eqref{bdry} when they are in the boundedness condition as Theorem \ref{Thm_local_wp}. Define $(\tilde{u}, \tilde{\phi}, \tilde{F}) := (u, \phi, F) - (v, \psi, G)$. For two governing systems solved by these solution sets, we subtract corresponding equations respectively and test them with $\tilde{u}$, $\tilde{\phi}$ and $\tilde{F}$.
\begin{equation}\label{Thm_wp_uniq}
\left\{ 
\begin{aligned}
&\frac{1}{2} \dfrac{d}{dt} |\tilde{u}|^2 + (\eta(\phi)\nabla u - \eta(\psi) \nabla v, \nabla \tilde{u})
        \\& \qquad = (-u \cdot \nabla u + v \cdot \nabla v, \tilde{u}) - \lambda (\nabla \cdot(\nabla \phi \otimes \nabla \phi - \nabla \psi \otimes \nabla \psi), \tilde{u})
        \\&\qquad \qquad + (\nabla \cdot (\nu(\phi)(FF^T -I)-\nu(\psi)(GG^T -I)), \tilde{u})
        +(-\frac{\eta(\phi)}{\kappa(\phi)}(1-\phi)u+\frac{\eta(\psi)}{\kappa(\psi)}(1-\psi)v, \tilde{u}) 
        \\&
\nabla \cdot \tilde{u} = 0, 
\\&
\frac{1}{2} \dfrac{d}{dt} |\tilde{F}|^2 +(- k \nu (\phi) \Delta F + k \nu (\psi) \Delta G, \tilde{F} )+2k( \nabla \nu(\phi) \cdot \nabla F - \nabla \nu(\psi) \cdot \nabla G, \tilde{F})
        + ( u \cdot \nabla F -v \cdot \nabla G, \tilde{F})
        \\& \qquad = (\nabla u F - \nabla v G, \tilde{F})
\\&
\frac{1}{2} \dfrac{d}{dt} |\tilde{\phi}|^2 = (-(u \cdot \nabla \phi -v \cdot \nabla \psi), \tilde{\phi} )- \tau \lambda (\Delta^2 \tilde{\phi} , \tilde{\phi} )
        +\tau \lambda \gamma (\Delta f'(\phi) 
        - \Delta f'(\psi), \tilde{\phi}) 
        \\& \qquad \qquad + \frac{\tau}{2}(\Delta (\nu'(\phi)tr(FF^T-I))-\Delta (\nu'(\psi)tr(GG^T-I)), \tilde{\phi}) .
\end{aligned}
\right.
\end{equation}
Here, $\epsilon$ is defined as role of $\mu$, i.e. $\epsilon = -\lambda \Delta \psi + \lambda \gamma f'(\psi) + \dfrac{{ \nu'(\psi)}}{2} \text{tr}(GG^T - I)$. Now, we will use minimal coefficient $\dot{m}$ = min$\{ {\alpha}, {\alpha k}, {\tau \lambda} \}$  in our computation which is the set of the dissipation term coefficients in governing system.
\\
\par
Test $\eqref{Thm_wp_uniq}_1$ with $\tilde{u}$. We obtain
\begin{equation}\label{5_1}
    \begin{aligned}
        & \frac{1}{2} \dfrac{d}{dt} |\tilde{u}|^2 + (\eta(\phi)\nabla u - \eta(\psi) \nabla v, \nabla \tilde{u})
        \\&= (-u \cdot \nabla u + v \cdot \nabla v, \tilde{u}) - \lambda (\nabla \cdot(\nabla \phi \otimes \nabla \phi - \nabla \psi \otimes \nabla \psi), \tilde{u})
        \\&+ (\nabla \cdot (\nu(\phi)(FF^T -I)-\nu(\psi)(GG^T -I)), \tilde{u})
        +(-\frac{\eta(\phi)}{\kappa(\phi)}(1-\phi)u+\frac{\eta(\psi)}{\kappa(\psi)}(1-\psi)v, \tilde{u}).
    \end{aligned}
\end{equation}
One term in above inequality is written as
\begin{equation}
    \begin{aligned}
        &(\eta(\phi)\nabla u - \eta(\psi) \nabla v, \nabla \tilde{u}) = (\eta(\phi)\nabla u -\eta(\phi)\nabla v, \nabla \tilde{u}) + (\eta(\phi)\nabla v  - \eta(\psi) \nabla v, \nabla \tilde{u})
        \\&
        \geq \alpha |\nabla \tilde{u}|^2 + (\eta(\phi)\nabla v  - \eta(\psi) \nabla v, \nabla \tilde{u})
    \end{aligned}
\end{equation}
Let us define partial terms for the test.
\begin{equation}
    \begin{aligned}
        G_1 : &= \nabla \cdot (\eta(\phi)\nabla v  - \eta(\psi) \nabla v)- \lambda (\nabla \cdot(\nabla \phi \otimes \nabla \phi - \nabla \psi \otimes \nabla \psi))
        \\&+ \nabla \cdot (\nu(\phi)(FF^T -I)-\nu(\psi)(GG^T -I))
        -\frac{\eta(\phi)}{\kappa(\phi)}(1-\phi)u+\frac{\eta(\psi)}{\kappa(\psi)}(1-\psi)v
    \end{aligned}
\end{equation}
We test $G_1$ by the $\tilde{u}$. To do this, let us use formula $\nabla \cdot (\nabla \phi \otimes \nabla \phi) = \Delta \phi \nabla \phi +\nabla(\frac{1}{2}|\nabla \phi|^2)$ to bound several terms of above equation. 
By estimating as follows, we bound the term with $(|\tilde{u}|^2 + |\tilde{\phi}|^2)$ and higher-order terms which will be absorbed properly later.
\begin{equation}
    \begin{aligned}
        (G_1 &, \tilde{u}) := |\nabla \cdot (\eta(\phi)\nabla v  - \eta(\psi) \nabla v)|+| - \lambda (\nabla \cdot(\nabla \phi \otimes \nabla \phi - \nabla \psi \otimes \nabla \psi), \tilde{u})
        \\&+ (\nabla \cdot (\nu(\phi)(FF^T -I)-\nu(\psi)(GG^T -I)), \tilde{u})
        +(-\frac{\eta(\phi)}{\kappa(\phi)}(1-\phi)u+\frac{\eta(\psi)}{\kappa(\psi)}(1-\psi)v, \tilde{u})  | 
        \\& \leq |(\eta(\phi)\nabla v  - \eta(\psi) \nabla v, \nabla \tilde{u}|+
         \lambda |(\Delta \tilde{\phi} \nabla \phi + \Delta \psi \nabla \tilde{\phi} +\nabla(\frac{1}{2}(\nabla \phi + \nabla \psi) \cdot (\nabla \phi - \nabla \psi)) , \tilde{u})|
        \\&+ |(\nu'(\phi)\nabla \phi \cdot (FF^T -I)-\nu'(\psi)\nabla \psi \cdot (GG^T -I), \tilde{u})| + \beta |(\nabla \cdot (FF^T - GG^T), \tilde{u})| 
        \\&
        +\frac{\beta}{\alpha}|(\tilde{u}, \tilde{u})|
        + \frac{\beta}{\alpha}{ |(\tilde{\phi}u + \psi \tilde{u}, \tilde{u})|}
        \\&\leq |\eta(\phi) - \eta(\psi)|| \nabla v |_2| \nabla \tilde{u})|+
         \lambda |(\Delta \tilde{\phi} \nabla \phi + \Delta \psi \nabla \tilde{\phi} + \frac{1}{2}\nabla (\nabla \phi + \nabla \psi) \cdot \nabla \tilde{\phi} + (\nabla \phi + \nabla \psi) \cdot \nabla^2 \tilde{\phi}, \tilde{u})|
        \\&
        +{  |((\nu'(\phi)-\nu'(\psi))\nabla \phi \cdot (FF^T -I),\tilde{u})|}+ \beta|(\nabla \tilde{\phi} \cdot (FF^T -I), \tilde{u})|+\beta |(\nabla \psi \cdot (FF^T -GG^T), \tilde{u})| 
        \\
        & + 2\beta|(\nabla \cdot (\tilde{F} {F}^T), \tilde{u})|+ 2\beta|(\nabla \cdot ({G}\tilde{F}^T), \tilde{u})|
        +\frac{\beta}{\alpha}|\tilde{u}|^2+ \frac{\beta}{\alpha}{ |(\tilde{\phi}u + \psi \tilde{u}, \tilde{u})|}
        \\&\leq |\tilde{\phi}|| \nabla v |_2| \nabla \tilde{u}|
        +
        \lambda
        (
        |\Delta \tilde{\phi } ||\nabla \phi|_{L^{\infty}(\Omega)}| \tilde{u}| + |\Delta \psi|_{L^{\infty}(\Omega)} |\nabla \tilde{\phi}|| \tilde{u}| + |\nabla^2 (\phi +\psi)|_{L^{\infty}(\Omega)}|\Delta \tilde{\phi}| | \tilde{u}|
        \\&+{  C|\tilde{\phi}||\nabla \phi|_2 | FF^T -I|_2|\tilde{u}|}
        )
        +
        \beta|\Delta \tilde{\phi} ||(FF^T -I)|_{L^{\infty}(\Omega)}| \tilde{u}|
        \\&+\beta |\nabla \psi|_{L^{\infty}(\Omega)}|\tilde{F}||F+G|_{L^{\infty}}| \tilde{u}| + 2\beta|\nabla \tilde{F}|| {F}|_{L^{\infty}}| \tilde{u}|+2 \beta |\tilde{F}||\nabla F|_1|\tilde{u}|_1 + 2 \beta |G|_2 |\nabla \tilde{F}||\tilde{u}|
        \\&+ 2\beta|\nabla {G} |_{L^{6}(\Omega)}|\tilde{F}|_{L^{3}}| \tilde{u}|
        +\frac{\beta}{\alpha}|\tilde{u}|^2
        +\frac{\beta}{\alpha} (|\tilde{\phi}||u|_{L^{\infty}}+ |{\psi}|_{L^{\infty}}|\tilde{u}|)|\tilde{u}|
        \\&\leq 
        C
        (
        |\Delta \tilde{\phi } ||\phi|_3| \tilde{u}| + |\psi|_3 |\Delta \tilde{\phi}|| \tilde{u}| +(|\phi|_4+|\psi|_4)|\Delta \tilde{\phi}| | \tilde{u}|
        +{  |\tilde{\phi}||\nabla \phi|_2 (|F|_2^2+1)|\tilde{u}|}
        \\&
        +|\Delta \tilde{\phi} || \tilde{u}|
        +|\Delta \tilde{\phi} ||{F}|_2^2| \tilde{u}| 
        \\&
        +|\psi |_3|\nabla \tilde{F}|(|F|_2 + |G|_2)| \tilde{u}|+ |\nabla \tilde{F}||F|_2| \tilde{u}| +|G|_2|| \tilde{F}|^{\frac 12}| \tilde{F}|_1^{\frac 12} \tilde{u}|
        \\&+|\tilde{u}|^2
        +(|\tilde{\phi}||u|_2+ |\psi|_2|\tilde{u}|)|\tilde{u}|
        )
        \\&\leq |\tilde{\phi}|^2|  v |_3^2 + \frac{\dot{m}}{6} |\nabla \tilde{u}|^2
        +
        \frac{\dot{m}}{9}(|\Delta \tilde{\phi}|^2 + |\nabla \tilde{F}|) 
        \\&+ 
        C
        (
        |\phi|_3^2|\tilde{u}|^2 +|\psi|_3^2|\tilde{u}|^2  
        + (|\phi|_4^2 +|\psi|_4^2)|\tilde{u}|^2 
        +
        {  |\tilde{\phi}|^2 + |\phi|_3^2 (|F|_2^4+1)|\tilde{u}|^2}
        \\& +|\tilde{u}|^2 + |{F}|^4|\tilde{u}|^2
        +|\psi|_3(|{F}|_2^2+|{G}|_2^2)|\tilde{u}|^2
        +
        |{F}|_2^2|\tilde{u}|^2 + |G|_2^2 |\tilde{u}|^2
        +|\tilde{F}|^2 +|\tilde{u}|^2
        \\& + |\tilde{\phi}|^2|u|_2^2 + |\tilde{u}|^2+ |\psi|_2|\tilde{u}|^2)
        \\& \leq
        \frac{\dot{m}}{6} |\nabla \tilde{u}|^2
        + \frac{\dot{m}}{9}(|\Delta \tilde{\phi}|^2 + |\nabla \tilde{F}|) + M_1(u(t), \phi(t), F(t))(|\tilde{u}|^2 + |\tilde{\phi}|^2).
    \end{aligned}
\end{equation}
where $M_1(u(t), \phi(t), F(t))$ is bounded part for the terms of $(u, \phi, F)$ and $(v, \psi, G)$ as we proved in a priori estimate which we can check in Theorem \eqref{Thm_local_wp}. Here, we used \eqref{H2_phi}. And we used that $\nu$ is lipsitz continuous.\\
\par
Then, we can simplify the \eqref{5_1} from Poincare inequality \eqref{poincare_u} as follows.
\begin{equation}\label{5_1'}
    \begin{aligned}
        &\frac{1}{2} \dfrac{d}{dt} |\tilde{u}|^2 + \dot{m} |\nabla \tilde{u}|^2 
        \leq 
        -(\tilde{u} \cdot \nabla u , \tilde{u}) + (G_1, \tilde{u}) 
        \\&\leq |\tilde{u}|_{L^6}|\nabla u|_{L^3}| \tilde{u}| + (G_1, \tilde{u})
        \\&\leq C|\nabla \tilde{u}||\nabla u|^{\frac 12}|\nabla u|_1^{\frac 12}| \tilde{u}| + (G_1, \tilde{u}) 
        \\&\leq C|\nabla u||\nabla u|_1|\tilde{u}|^2 + \frac{\dot{m}}{9}|\nabla \tilde{u}|^2 + (G_1, \tilde{u}) 
        \\&
         \leq C|\nabla u||\nabla u|_1|\tilde{u}|^2 + {  \frac{\dot{m}}{9}}|\nabla \tilde{u}|^2 
        \\& + \frac{\dot{m}}{6} |\nabla \tilde{u}|^2
        + \frac{\dot{m}}{9}(|\Delta \tilde{\phi}|^2 + |\nabla \tilde{F}|) + M_1(u(t), \phi(t), F(t))(|\tilde{u}|^2 + |\tilde{\phi}|^2)
    \end{aligned}
\end{equation}
\\
\\
Next, test the difference of $\eqref{governing_system}_4$ on $(u, \phi, F)$ and $\eqref{governing_system}_4$ on $(v, \psi, G)$ with $\tilde{\phi}$. Firstly, we write the subtraction as
\begin{equation}\label{5_2_0}
    \begin{aligned}
        & \dfrac{d}{dt} \tilde{\phi} = -(u \cdot \nabla \phi -v \cdot \nabla \psi) + \tau \Delta \tilde{\mu}- \mu_3I_h(\tilde{\phi})
        \\&= -(u \cdot \nabla \phi -v \cdot \nabla \psi) - \tau \lambda \Delta^2 \tilde{\phi} 
        +\tau \lambda \gamma (\Delta f'(\phi) 
        - \Delta f'(\psi)) \\&+ \frac{\tau}{2}(\Delta (\nu'(\phi)tr(FF^T-I))-\Delta (\nu'(\psi)tr(GG^T-I)))
    \end{aligned}
\end{equation}
On above equation, we test the difference of the governing equation on $(u, \phi, F)$ and governing equation on $(v, \psi, G)$ and used $\eqref{governing_system}_5$ to expand the equation. Now, test the \eqref{5_2_0} with $\tilde{\phi}$ to obtain
\begin{equation}\label{5_2}
    \begin{aligned}
        &\frac{1}{2} \dfrac{d}{dt} |\tilde{\phi}|^2 + \tau \lambda |\Delta \tilde{\phi}|^2 = -\tau \lambda \int_{\partial \Omega}\partial_n \Delta \tilde{\phi} \tilde{\phi} d\Gamma -(u \cdot \nabla \phi -v \cdot \nabla \psi, \tilde{\phi})
        +\tau \lambda \gamma (\Delta f'(\phi) 
        - \Delta f'(\psi), \tilde{\phi}) \\&+ \frac{\tau}{2}(\Delta (\nu'(\phi)tr(FF^T-I))-\Delta (\nu'(\psi)tr(GG^T-I)), \tilde{\phi}) .
    \end{aligned}
\end{equation}

On this, let us define
\begin{equation}
    \begin{aligned}
        (G_2, \tilde{\phi}) : &= -\tau \lambda \int_{\partial \Omega}\partial_n \Delta \tilde{\phi} \tilde{\phi} d\Gamma + (\tau \lambda \gamma (\Delta f'(\phi) - \Delta f'(\psi)) , \tilde{\phi})
        \\&+ (\frac{\tau}{2}(\Delta (\nu'(\phi)tr(FF^T-I)), \tilde{\phi})-(\Delta (\nu'(\psi)tr(GG^T-I)), \tilde{\phi})
    \end{aligned}
\end{equation}
For the integration term on the boundary of the domain, we estimate as
\begin{equation}\label{5_2_bdry}
    \begin{aligned}
        & \int_{\partial \Omega}\partial_n (\lambda  \Delta \tilde{\phi}) \tilde{\phi} d\Gamma =
          \int_{\partial \Omega}\partial_n (-\epsilon + \lambda \gamma f'(\psi) + \frac{\nu' (\psi)}{2} tr(GG^T-I)) \tilde{\phi} d\Gamma
        \\& = 0
    \end{aligned}
\end{equation}
from Neumann boundary condition on $\epsilon$, $\psi$ and $G$ in \eqref{bdry}.\\
Continue the estimate as
\begin{equation}\label{5_2_F2}
    \begin{aligned}
        &(G_2 , \tilde{\phi}) 
        \\&= \tau \lambda \gamma (\Delta f'(\phi) - \Delta f'(\psi), \tilde{\phi}) 
        + \frac{\tau}{2}(\Delta (\nu'(\phi)tr(FF^T-I))-\Delta (\nu'(\psi)tr(GG^T-I)), \tilde{\phi})
        \\&\leq \tau \lambda \gamma ( f'''(\phi)\|\nabla \phi\|^2 -  f'''(\psi)\|\nabla \psi\|^2 +f''(\phi) \Delta \phi - f''(\psi) \Delta \psi, \tilde{\phi}) 
        \\&+ \frac{\tau}{2} \Biggl\{  \Biggr.( \nu'''(\phi)|\nabla \phi|^2 tr(FF^T-I)-\nu'''(\psi) |\nabla \psi|^2tr(GG^T-I), \tilde{\phi})
        \\&+ ( \nu''(\phi)\Delta \phi tr(FF^T-I)-\nu''(\psi) \Delta \psi tr(GG^T-I), \tilde{\phi})
        \\&+(\nu''(\phi)\nabla\phi 2\sum_{i,j}(\nabla F^{ij} F^{ij}) - \nu''(\psi)\nabla\psi 2\sum_{i,j}(\nabla G^{ij} G^{ij}), \tilde{\phi} )
        +(\nu'(\phi)\sum_{ij}\Delta ((F^{ij})^2)- \nu'(\psi)\sum_{ij}\Delta ((G^{ij})^2), \tilde{\phi})\Biggl.  \Biggr\}
        \\&\leq 
        C((\phi \|\nabla \phi \|^2 - \phi \|\nabla \psi \|^2 +  \phi \|\nabla \psi \|^2 - \psi \| \nabla \psi\|^2, \tilde{\phi})  
        + (\|\nabla \phi \|^2 - \| \nabla \psi\|^2, \tilde{\phi})) \\&+ C((\phi^2 \Delta{\phi} -\phi^2 \Delta{\psi} + \phi^2 \Delta{\psi} -\psi^2 \Delta{\psi}, \tilde{\phi}) + (\phi \Delta{\phi}-\phi \Delta{\psi}+\phi \Delta{\psi} -\psi \Delta{\psi}, \tilde{\phi}) + (\Delta{\phi}-\Delta{\psi}, \tilde{\phi}))
        \\&+\frac{ \tau}{2}|((\nu'''(\phi)-\nu'''(\psi))|\nabla \phi|^2tr(FF^T-I),\tilde{\phi})|+ \frac{\beta \tau}{2}( \|\nabla \phi\|^2 tr(FF^T-I)-\|\nabla \psi\|^2 tr(FF^T-I)
        \\&+\|\nabla \psi\|^2 tr(FF^T-I)+ \|\nabla \psi\|^2tr(GG^T-I), \tilde{\phi}) 
        \\&+\frac{ \tau}{2}|((\nu''(\phi)-\nu''(\psi))\Delta \phi tr(FF^T-I),\tilde{\phi})|+ \frac{\beta \tau}{2}( \Delta \phi tr(FF^T-I)- \Delta \psi tr(FF^T-I)
        \\&+\Delta \psi tr(FF^T-I)+ \Delta \psi tr(GG^T-I), \tilde{\phi}) 
        \\&+2 ((\nu''(\phi)-\nu''(\psi))\nabla \phi \sum_{i,j}(\nabla F^{i,j}F^{i,j}), \tilde{\phi})+2 \beta (\nabla\phi \sum_{i,j}(\nabla F^{ij} F^{ij}) - \nabla\psi \sum_{i,j}(\nabla G^{ij} G^{ij}), \tilde{\phi} )
        \\&+2((\nu'(\phi)-\nu'(\psi))\sum_{i,j}\Delta ((F^{i,j})^2), \tilde{\phi})+\beta (2\sum_{i,j}(|\nabla F^{i,j}|^2 - |\nabla G^{i,j}|^2 + F^{i,j} \Delta F^{i,j} - G^{i,j} \Delta G^{i,j}), \tilde{\phi})
        \\&\leq  
        C (|\phi|_2|\nabla \tilde{\phi}|_1(|\nabla \phi|_1 + |\nabla \psi|_1 )|\tilde{\phi}| + |\tilde{\phi}||\nabla \psi|_2^2|\tilde{\phi}|
        +|\nabla \tilde{\phi}|_1(|\nabla \phi|_1 + |\nabla \psi|_1 )|\tilde{\phi}|) 
        \\&+
        C(|\phi|_2^2|\Delta \tilde{\phi}||\tilde{\phi}|+ |\tilde{\phi}|(|\phi|_2 + |\psi|_2)|\Delta {\psi}|_2|\tilde{\phi}|+|\phi|_2|\Delta \tilde{\phi}||\tilde{\phi}|+|\tilde{\phi}||\Delta {\psi}|_2|\tilde{\phi}|+|\Delta \tilde{\phi}||\tilde{\phi}|)
        \\
        &+ C |\tilde{\phi}||\nabla \phi|_2^2|tr(FF^T-I)|_2 |\tilde{\phi}| 
        + {  \frac{\beta \tau}{2} }
        |\nabla \tilde{\phi}|(|\nabla \phi|_2 + |\nabla \psi|_2)|tr(FF^T)|_2|\tilde{\phi}|
        \\& +  \frac{\beta \tau}{2} |\nabla {\psi}|_2^2|(|\tilde{F}||F|_2+|G|_2|\tilde{F}|)|\tilde{\phi}|
        \\& +  \frac{\beta \tau}{2} (|\tilde{\phi}||\Delta \phi|_2(|F|_2|F|_2 +1)|\tilde{\phi}| + |\Delta \tilde{\phi}|(|F|_2|F|_2 +1)|\tilde{\phi}| +|\Delta \psi |_2 (|\tilde{F}||G|_2+ |F|_2|\tilde{F}|)|\tilde{\phi}|)
        \\
        & + C |\nu''(\phi)-\nu''(\psi)| |\nabla \phi |_2 \sum_{i,j}|\nabla F^{i,j}|_1|F^{i,j}|_2|\tilde{\phi}|_1
        +2 \beta \sum_{i,j}(
        \nabla \tilde{\phi} \nabla F^{i,j} F^{i,j} + \nabla \psi (\nabla \tilde{F}^{i,j} F^{i,j} + \nabla G^{i,j} \tilde{F}^{i,j})
        , \tilde{\phi}) 
        \\&+ C |\nu'(\phi)-\nu'(\psi)||F|_2^2 |\tilde{\phi}|_1+
        2\beta \sum_{i,j}|\nabla \tilde{F}^{i,j}|(|\nabla F^{i,j}|_{L^6}+|\nabla G^{i,j}|_{L^6})|\tilde{\phi}|_{L^3} + 2\beta  \sum_{i,j}(\tilde{F}^{i, j}\Delta F^{i, j}+ {G}^{i, j}\Delta \tilde{F}^{i, j}, \tilde{\phi})
        \\
        &
        \leq C (|\phi|_2|\nabla \tilde{\phi}|_1(| \phi|_2 + | \psi|_2 )|\tilde{\phi}| + |\tilde{\phi}|| \psi|_3^2|\tilde{\phi}|
         + |\Delta \tilde{\phi}|(|\phi|_2 + |\psi|_2 +1)|\tilde{\phi}|)
        \\&
        +
        C(|\phi|_2^2|\Delta \tilde{\phi}||\tilde{\phi}|+ |\tilde{\phi}|(|\phi|_2 + |\psi|_2)| {\psi}|_4|\tilde{\phi}|+|\phi|_2|\Delta \tilde{\phi}||\tilde{\phi}|+|\tilde{\phi}|| {\psi}|_4|\tilde{\phi}|+|\Delta \tilde{\phi}||\tilde{\phi}|)
        \\
        &+ C |\tilde{\phi}|^2| \phi|_4^2(|F|_2^2+1)+
        |\nabla \tilde{\phi}|(|\nabla \phi|_2 + |\nabla \psi|_2)|\tilde{\phi}|
        + |\nabla \tilde{\phi}|(| \phi|_3
        +| \psi|_3)|F|_2^2|\tilde{\phi}|
        \\&+  \frac{\beta \tau}{2} |\nabla {\psi}|_2^2|(|\tilde{F}||F|_2+|G|_2|\tilde{F}|)|\tilde{\phi}|)
         + |\Delta \tilde{\phi}|(|F|_2^2 +1)|\tilde{\phi}| +|\Delta \psi |_2 (|\tilde{F}||G|_2+ |F|_2|\tilde{F}|)|\tilde{\phi}|)
        \\
        &+{  |\tilde{\phi}||\phi|_3|F|_2^2(|\tilde{\phi}|+ |\Delta \tilde{\phi}|)}
        +2 \beta(|
        \nabla \tilde{\phi} |\nabla F|_1| F|_1 + |\nabla \psi|_2 ( \sum_{i,j}|\nabla \tilde{F}^{i,j}|| F |_2+ |\nabla G|_1|\tilde{F}|_1)
        |\tilde{\phi}|)
        \\&+{  |\tilde{\phi}||F|_2^2(|\tilde{\phi}|+ |\Delta \tilde{\phi}|)}
        +2\beta \sum_{i,j}|\nabla \tilde{F}^{i,j}|(| F|_2+| G|_2)|\tilde{\phi}|_1^{\frac 12}|\tilde{\phi}|^{\frac 12} + 2\beta |\tilde{F}||F|_2|\tilde{\phi}|_2+ \sum_{i,j}(G^{i, j}\Delta \tilde{F}^{i, j}, \tilde{\phi}).
    \end{aligned}
\end{equation}
We used that $\nu'$ and $\nu''$ are Lipschitz continuous and that $ |\tilde{\phi}|_2 \leq C|\Delta \tilde{\phi} |+C|\int_{\Omega} \phi dx|$.
\\
For the last term in the above inequality,
\begin{equation}\label{5_2_Del}
    \begin{aligned}
        & \sum_{i,j}(G^{i,j} \Delta \tilde{F}^{i,j}, \tilde{\phi})
        \\& = 
        -\sum_{i,j}(\nabla G^{i,j} \cdot \nabla \tilde{F}^{i,j}, \tilde{\phi})
        -\sum_{i,j}(\nabla \tilde{F}^{i,j}, G^{i,j} \nabla \tilde{\phi})
        + \sum_{i,j}\int_{\partial \Omega} \mathbf{n} \cdot (G^{i,j} \nabla \tilde{F}^{i,j} \tilde{\phi}) d \Gamma
        \\&
        \leq
        \sum_{i,j}|\nabla G^{i,j} |_{L^6}|\nabla \tilde{F}^{i,j}||\tilde{\phi}|_{L^3}
        +\sum_{i,j}| \nabla \tilde{F}^{i,j}|_{H^{-1}}| G^{i,j}\nabla \tilde{\phi}|_1+ \sum_{i,j}(|\tilde{F}^{i,j} |_1 
        \\&
        \leq
        \sum_{i,j}|\nabla G^{i,j} |_{L^6}|\nabla \tilde{F}^{i,j}||\tilde{\phi}|_{L^3}
        +\sum_{i,j}| \tilde{F}^{i,j}|(| G^{i,j}|_2|\nabla \tilde{\phi}| +| \nabla G^{i,j}|_{L^6}|\nabla \tilde{\phi}|_{L^3} + | G^{i,j}|_2|\Delta \tilde{\phi}|)
        \\& \leq \sum_{i,j}| G^{i,j}|_2| \nabla \tilde{F}^{i,j}||\tilde{\phi}|^{\frac 12}|\tilde{\phi}|_1^{\frac 12}
         + \sum_{i,j}|\tilde{F}^{i,j}| (| G^{i,j}|_2|\Delta \tilde{\phi}| +|  G^{i,j}|_2|\Delta \tilde{\phi}| + | G^{i,j}|_2|\Delta \tilde{\phi}|)
        \\&\leq \frac{\dot{m}}{9}(|\Delta \tilde{\phi}|^2 + \sum_{i,j}|\nabla \tilde{F}^{ij}|) 
        \\& +
        C
        (
        |G|_2^4|\tilde{\phi}|^2 +|\tilde{F}|^2|G|_2^2 
        )
    \end{aligned}
\end{equation}
On above, we employed inequalities \eqref{lem_neg_sobolev} and \eqref{def_neg_sobolev}. Also we used Gagliardo-nirenberg inequality here.
\\
\par
Let us combine \eqref{5_2_bdry} - \eqref{5_2_Del}.
\begin{equation}
    \begin{aligned}
        &(G_2, \phi)  
        \\& \leq C (|\phi|_2|\nabla \tilde{\phi}|_1(|\nabla \phi|_1 + |\nabla \psi|_1 )|\tilde{\phi}| + |\tilde{\phi}||\nabla \psi|_2^2|\tilde{\phi}|
        +|\nabla \tilde{\phi}|_1(|\nabla \phi|_1 + |\nabla \psi|_1 )|\tilde{\phi}|) 
        \\&+
        C(|\phi|_2^2|\Delta \tilde{\phi}||\tilde{\phi}|+ |\tilde{\phi}|(|\phi|_2 + |\psi|_2)|\Delta {\psi}|_2|\tilde{\phi}|+|\phi|_2|\Delta \tilde{\phi}||\tilde{\phi}|+|\tilde{\phi}||\Delta {\psi}|_2|\tilde{\phi}|+|\Delta \tilde{\phi}||\tilde{\phi}|)
        \\
        &+ C |\tilde{\phi}|^2| \phi|_4^2(|F|_2^2+1)+
        |\nabla \tilde{\phi}|(|\nabla \phi|_2 + |\nabla \psi|_2)|\tilde{\phi}|
        + |\nabla \tilde{\phi}|(| \phi|_3
        +| \psi|_3)|F|_2^2|\tilde{\phi}|
        \\&+  \frac{\beta \tau}{2} |\nabla {\psi}|_2^2|(|\tilde{F}||F|_2+|G|_2|\tilde{F}|)|\tilde{\phi}|)
         + |\Delta \tilde{\phi}|(|F|_2^2 +1)|\tilde{\phi}| +|\Delta \psi |_2 (|\tilde{F}||G|_2+ |F|_2|\tilde{F}|)|\tilde{\phi}|)
        \\
        &+{  |\tilde{\phi}||\phi|_3|F|_2^2(|\tilde{\phi}|+ |\Delta \tilde{\phi}|)}
        +2 \beta(|
        \nabla \tilde{\phi} |\nabla F|_1| F|_1 + |\nabla \psi|_2 ( \sum_{i,j}|\nabla \tilde{F}^{i,j}|| F |_2+ |\nabla G|_1|\tilde{F}|_1)
        |\tilde{\phi}|)
        \\&+{  |\tilde{\phi}||F|_2^2(|\tilde{\phi}|+ |\Delta \tilde{\phi}|)}
        +2\beta \sum_{i,j}|\nabla \tilde{F}^{i,j}|(| F|_2+| G|_2)|\tilde{\phi}|_1^{\frac 12}|\tilde{\phi}|^{\frac 12} + 2\beta |\tilde{F}||F|_2|\tilde{\phi}|_2+ \sum_{i,j}(G^{i, j}\Delta \tilde{F}^{i, j}, \tilde{\phi})
        \\&+ \frac{\dot{m}}{9}(|\Delta \tilde{\phi}|^2 + \sum_{i,j}|\nabla \tilde{F}^{ij}|)\\& +
        C
        (
        |G|_2^4|\tilde{\phi}|^2 +|\tilde{F}|^2|G|_2^2 
        )
        \\&\leq \frac{\dot{m}}{9}(|\Delta \tilde{\phi}|^2 + \sum_{i,j}|\nabla \tilde{F}^{ij}|) 
        \\& + C (|\phi|_2^2(| \phi|_2^2 + | \psi|_2^2 )|\tilde{\phi}|^2 + |\tilde{\phi}|^2| \psi|_3^2
        +C (|\phi|_2^2 + |\psi|_2^2 +1)|\tilde{\phi}|^2) 
        \\&+
        C(|\phi|_2^4|\tilde{\phi}|^2+ |\tilde{\phi}|^2(|\phi|_2 + |\psi|_2)| {\psi}|_4+|\phi|_2^2|\tilde{\phi}|^2+|\tilde{\phi}|^2| {\psi}|_4+
        C|\tilde{\phi}|^2)
        \\
        &+ C |\tilde{\phi}|^2| \phi|_4^2(|F|_2^2+1)+
        (| \phi|_3^2 + C| \psi|_3^2)|\tilde{\phi}|^2
        + C(| \phi|_3^2
        +| \psi|_3^2)|F|_2^4|\tilde{\phi}|^2
        \\&+ C (|\tilde{F}|^2 + | {\psi}|_3^4|(|F|_2^2+|G|_2^2)|\tilde{\phi}|^2)
         +C(|F|_2^4 +1)|\tilde{\phi}|^2 +C(|\tilde{F}|^2 + | \psi |_3^2 (|G|_2^2+ |F|_2^2)|\tilde{\phi}|^2)
        \\&+C \left( \right.
        (|\tilde{\phi}|^2|F|_2^2
        +|\tilde{\phi}|^2|F|_2^4)
        +|F|_2^2| F|_1^2 
        + 
        | \psi|_3^2| F |_2^2|\tilde{\phi}|^2+ | \psi|_3| G|_2|\tilde{F}||\tilde{\phi}|
        \\&
        +{  (|\tilde{\phi}|^2|F|_2^4+|\tilde{\phi}|^2|F|_2^4)}
        +((| F|_2+| G|_2)|\tilde{\phi}|^2+(| F|_2^4+| G|_2^4)|\tilde{\phi}|^2) + (|\tilde{F}|^2|F|_2^2+|\tilde{F}|^2|F|_2^2+|\tilde{\phi}|^2)
        \left. \right)
        \\& +
        C
        (
        |G|_2^4|\tilde{\phi}|^2 +|\tilde{F}|^2|G|_2^2 
        )
        \\&
        \leq  
         \frac{\dot{m}}{9}(|\Delta \tilde{\phi}|^2 + \sum_{i,j}|\nabla \tilde{F}^{ij}|)  + M_2(u(t), \phi(t), F(t))( |\tilde{\phi}|^2 +|\tilde{F}|^2)
    \end{aligned}
\end{equation}
by using \eqref{H2_phi} to estimate $|\nabla \tilde{\phi}|$. $M_2(u(t), \phi(t), F(t))$ is bounded part for the terms of $(u, \phi, F)$ and $(v, \psi, G)$ as we proved in a priori estimate for Theorem \eqref{Thm_local_wp} as we can see in this Theorem.
We used \eqref{H2_phi} here as well.\\
Then, we can simplify the \eqref{5_2} as
\begin{equation}\label{5_2'}
    \begin{aligned}
        &\frac{1}{2} \dfrac{d}{dt} |\tilde{\phi}|^2 + \dot{m} |\Delta  \tilde{\phi}|^2 
        \\&\leq -(\tilde{u} \cdot \nabla \phi , \tilde{\phi}) + (G_2, \tilde{\phi}) 
        \\&
        \leq |\tilde{u}|_1|\nabla \phi|^{\frac{1}{2}}|\nabla {\phi}|_1^{\frac{1}{2}}|\tilde{\phi}|+ (G_2, \tilde{\phi}) 
        \\&
        \leq 
         C| \nabla \phi|| \nabla \phi|_1|\tilde{\phi}|^2+ \frac{\dot{m}}{9}|\nabla \tilde{u}|^2 +(G_2, \tilde{\phi}) 
        \\&\leq C| \nabla \phi|| \nabla \phi|_1|\tilde{\phi}|^2 + {  \frac{\dot{m}}{9}}|\nabla \tilde{u}|^2 
        + \frac{\dot{m}}{9}(|\Delta \tilde{\phi}|^2 + |\nabla \tilde{F}|) + M_2(t)(|\tilde{\phi}|^2 +|\tilde{F}|^2)  
        .
    \end{aligned}
\end{equation}
Also, \eqref{H2_phi} was employed to control of $H^2(\Omega)$ norm of the $\tilde{\phi}$.
\\
\\
\\
As a next step, test the difference of $\eqref{governing_system}_3$ on $(u, \phi, F)$ and $\eqref{governing_system}_3$ on $(v, \psi, G)$ with $\tilde{F}$. We can write the subtraction as
\begin{equation}\label{5_3_0}
    \begin{aligned}
        &\frac{1}{2} \dfrac{d}{dt} \tilde{F} - k \nu (\phi) \Delta F + k \nu (\psi) \Delta G +2 k\nabla \nu(\phi) \cdot \nabla F - 2 k\nabla \nu(\psi) \cdot \nabla G
        + u \cdot \nabla F -v \cdot \nabla G
        \\&= \nabla u F - \nabla v G
    \end{aligned}
\end{equation}
By taking the test function $\tilde{F}$, 
\begin{equation}
    \begin{aligned}
    &- k (\nu (\phi) \Delta \tilde{F} +(\nu (\phi) -\nu (\psi) )\Delta G, \tilde{F}) 
    \\& \geq \beta k \sum_{i,j}(\nabla \tilde{F}^{i,j},\nabla \tilde{F}^{i,j}) -k((\nu (\phi) -\nu (\psi) )\Delta G, \tilde{F}) 
    \end{aligned}
\end{equation}
Therefore, we write the test as
\begin{equation}\label{5_3}
    \begin{aligned}
        &\frac{1}{2} \dfrac{d}{dt} |\tilde{F}|^2 + \alpha k \sum_{i,j}|\nabla \tilde{F}^{ij}|^2 
        \\&\leq k((\nu (\phi) -\nu (\psi) )\Delta G, \tilde{F}) 
        +2k (\nabla \nu(\phi) \cdot \nabla \tilde{F}+( \nu'(\phi) \nabla \phi- \nu'(\phi)\nabla \psi + \nu'(\phi)\nabla \psi  -  \nu'(\psi) \nabla \psi )\cdot \nabla G, \tilde{F})
        \\&-(u \cdot \nabla F -v \cdot \nabla G, \tilde{F})
        +(\nabla u F - \nabla v G, \tilde{F})
        \\&\leq Ck ( \|\phi - \psi\|\Delta G,  \tilde{F})
        +2k(\nu'(\phi)\nabla \phi \cdot \nabla F, \tilde{F})
        \\&
        + 2 \beta k ((\nabla \phi - \nabla \psi)\cdot \nabla G, \tilde{F})
        + 2 C k  (\|\phi - \psi\|\nabla \psi \cdot \nabla G, \tilde{F})
        \\&
        -(\tilde{u} \cdot \nabla F , \tilde{F})
        + (\nabla \tilde{u} F + \nabla v \tilde{F}, \tilde{F}).
    \end{aligned}
\end{equation}
Here, {  we used the neumann boundary condition} for F as in \eqref{bdry} for integration by part and divergence free condition on $v$. Also, we used Lipschitz continuity of $\nu$ and $\nu'$.
\\
Let us define
\begin{equation}
    \begin{aligned}
        (F_3,\tilde{F}) : &= Ck ( \|\phi - \psi\|\Delta G,  \tilde{F})
        +2k(\nu'(\phi)\nabla \phi \cdot \nabla F, \tilde{F})
        + 2 \beta k ((\nabla \phi - \nabla \psi)\cdot \nabla G, \tilde{F})
        \\&
        + 2 C k  (\|\phi - \psi\|\nabla \psi \cdot \nabla G, \tilde{F})
         + (\nabla \tilde{u} F +\nabla v \tilde{F}, \tilde{F})
    \end{aligned}
\end{equation}
We will estimate the terms of $G_3$ by testing with $\tilde{F}$. 
By estimating, we bound the term with $(|\tilde{u}|^2+|\tilde{\phi}|^2+\tilde{F}|^2 )$ and higher-order  terms which we will absorb to the other side later.
\begin{equation}
    \begin{aligned}
        &(F_3 , \tilde{F}) 
        \\&\leq 
         Ck ( \|\phi - \psi\|\Delta G,  \tilde{F})
        +2k(\nu'(\phi)\nabla \phi \cdot \nabla F, \tilde{F})
        + 2 \beta k ((\nabla \phi - \nabla \psi)\cdot \nabla G, \tilde{F})
        \\&
        + 2 C k  (\|\phi - \psi\|\nabla \psi \cdot \nabla G, \tilde{F})
        + ( |\nabla \tilde{u}|| F|_{L^{\infty}} + |\nabla v |_{L^{\infty}}|\tilde{F}|) |\tilde{F}|
        \\&
        \leq C|\tilde{\phi}|_2|\Delta G||\tilde{F}|
        + 2k \beta |\nabla \phi|_1\sum_{i,j}|\nabla F^{i,j}|_1|\tilde{F}|+ 2 \beta k |\tilde{\phi}|_1\sum_{i,j}|\nabla G^{i,j}|_1|\tilde{F}|
        \\
        &+
        2 C k |\tilde{\phi}|_2|\nabla \psi|_2|\nabla G|| \tilde{F}|
        +
        ( |\nabla \tilde{u}|| F|_{L^{\infty}} + |\nabla v |_{L^{\infty}}|\tilde{F}|) |\tilde{F}|
        \\&
        \leq
        C(|\Delta \tilde{\phi}|+|\tilde{\phi}|)|\Delta G||\tilde{F}|
        + 2 \beta k |{\phi}|_2 | F|_2|\tilde{F}|
        + 2 \beta k (|\Delta \tilde{\phi}|+|\tilde{\phi}|) | G|_2|\tilde{F}|
        \\&
        +2Ck(|\Delta \tilde{\phi}| + |\tilde{\phi}|)|\psi|_3|G|_1 |\tilde{F}|+\frac{\dot{m}}{6}|\nabla \tilde{u}| +C|F|_2^2|\tilde{F}|^2 + |v|_3^2 |\tilde{F}|^2
        \\&
        \leq
          C(| G|_2^2|\tilde{F}|^2+|\tilde{\phi}|^2 + | G|_2^2 |\tilde{F}|^2)
        +\frac{\dot{m}}{27}|\Delta \tilde{\phi}|^2 
        +C(|{\phi}|_2^2 + | F|_2|\tilde{F}|)
        \\&
        + C(| G|_2^2|\tilde{F}|^2+|\tilde{\phi}|^2 + | G|_2^2 |\tilde{F}|^2)
        +\frac{\dot{m}}{27}|\Delta \tilde{\phi}|^2
        \\&
        +2Ck(|\psi|_3^2|G|_1^2 |\tilde{F}|^2 + |\tilde{\phi}|^2 + |\psi|_3^2|G|_1^2 |\tilde{F}|^2)
        +\frac{\dot{m}}{27}|\Delta \tilde{\phi}|^2
        +\frac{\dot{m}}{6}|\nabla \tilde{u}| +C|F|_2^2|\tilde{F}|^2 + |v|_3^2 |\tilde{F}|^2
        \\&
        \leq  
        \frac{\dot{m}}{6}|\nabla \tilde{u}|^2 +\frac{\dot{m}}{9}|\Delta \tilde{\phi}|^2  + M_3(u(t), \phi(t), F(t))(|\tilde{u}|^2 + |\tilde{\phi}|^2 +|\tilde{F}|^2)
    \end{aligned}
\end{equation}
where $M_3(u(t), \phi(t), F(t))$ is bounded part for the terms of $(u, \phi, F)$ and $(v, \psi, G)$ as we proved in a priori estimate for Theorem \eqref{Thm_local_wp} result regularity. We used Poincare inequality for $\tilde{u}$ from \eqref{poincare_u} and also used \eqref{H2_phi} to estimate $|\tilde{\phi}|_2$.
Then, we can simplify the \eqref{5_3} as follows with Poincare inequality for $\tilde{F}$.
\begin{equation}\label{5_3'}
    \begin{aligned}
        &\frac{1}{2} \dfrac{d}{dt} |\tilde{F}|^2 + \dot{m} |\nabla  \tilde{F}|^2 
        \\&\leq 
        -(\tilde{u} \cdot \nabla F , \tilde{F}) + (G_3, \tilde{F})
        \\&\leq |\tilde{u}|_1|\nabla F|^{\frac{1}{2}}|\nabla F |_1^{\frac{1}{2}}|\tilde{F}| + (G_3, \tilde{F})
        \\&
        \leq 
        \frac{9c^2}{\dot{4m}}(1+\frac{1}{\lambda_1}) |\nabla F||\nabla F|_1|\tilde{F}|^2 
        + \frac{\dot{m}}{9}|\nabla \tilde{u}|^2 
        + (G_3, \tilde{F}) 
        \\&\leq \frac{9c^2}{\dot{4m}}(1+\frac{1}{\lambda_1})|\nabla F||\nabla F|_1|\tilde{F}|^2 + {  \frac{\dot{m}}{9}}|\nabla \tilde{u}|^2 
        \\&
        + \frac{\dot{m}}{6}(|\nabla \tilde{u}|^2) +\frac{\dot{m}}{9}(|\Delta \tilde{\phi}|^2 + |\nabla \tilde{F}|) + M_3(t)(|\tilde{u}|^2 + |\tilde{\phi}|^2 +|\tilde{F}|^2) 
        .
    \end{aligned}
\end{equation}
\\
\par
Combining the equations \eqref{5_1'}, \eqref{5_2'} and \eqref{5_3'} yields
\begin{equation}\label{long_att_pdeineq}
    \begin{aligned}
        & \dfrac{d}{dt} (|\tilde{u}|^2 +|\tilde{\phi}|^2 + |\tilde{F}|^2) 
        \\&+
        - 2(M_1(t) + M_2(t) + M_3(t) + C(| u|_1| u|_2+ |\phi|_1|  \phi|_2 + | F|_1| F|_2) ))(|\tilde{u}|^2 +|\tilde{\phi}|^2 + |\tilde{F}|^2)
        \\&
        \leq 0
    \end{aligned}
\end{equation}
On above inequality \eqref{long_att_pdeineq}, because $2(M_1(t) + M_2(t) + M_3(t) + C(| u|_1| u|_2+ |\phi|_1|  \phi|_2 + | F|_1| F|_2)$ is integrable on time interval $[0, T_0]$ from the normed space conditions in Theorem \eqref{Thm_local_wp}, $(|\tilde{u}|^2 +|\tilde{\phi}|^2 + |\tilde{F}|^2)$ is zero by using Gronwall's Inequality since $|\tilde{u_0}|^2 =|\tilde{\phi}_0|^2 = |\tilde{F}_0|^2 = 0$. \qedsymbol
\\
\par
\section{Numerical simulation on the system}
\subsection{Model developments and Benchmarks}
To proceed the system-stabilizing work on the governing system in \cite{ktt22}, we should have diffusion term for every single solution variable of $u$, $F$ and $\phi$ assigned their role as "spill over" role as in paper \cite{aot14}. 
To stabilize the governing system in \cite{ktt22}, we introduce diffusion terms for the solution variables 
$u, F$ and $\phi$, following the stabilizing role discussed in \cite{aot14}.

In this reference, the diffusion term helps control higher-order derivatives. Therefore, we construct a new model that preserves the energy-dissipation structure and local well-posedness. Before the steps to obtain this new model including second degree of differential as diffusion, we introduced more generalized viscoelasticity parameter $\nu(\phi)$ from $\lambda_e$ in \cite{ktt22}.
\\
\begin{figure}[htbp]
  \centering
    \begin{subfigure}[t]{0.495\textwidth}
      \centering
      \includegraphics[width=\linewidth,trim=10 20 10 20]{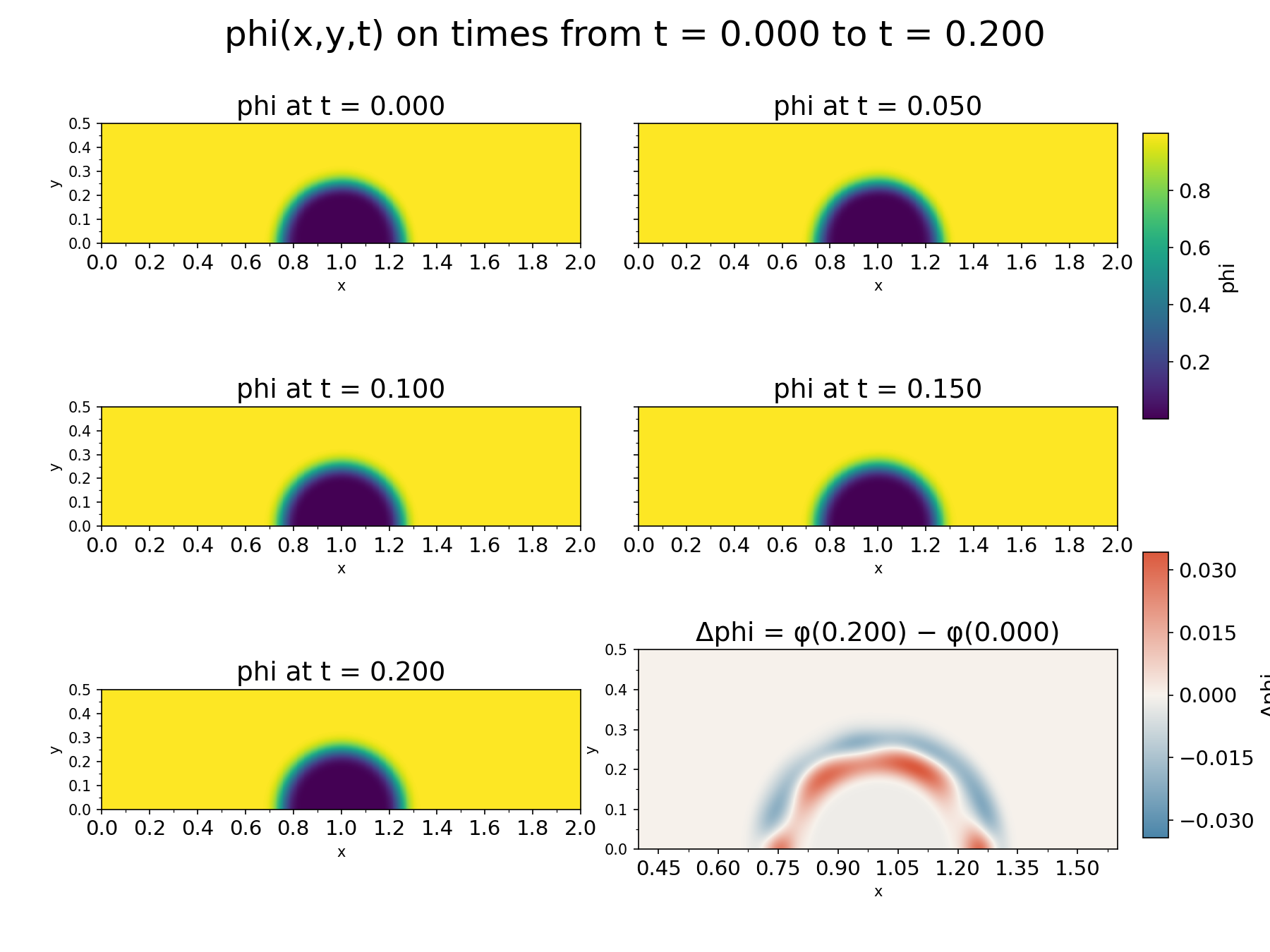}
      \caption{}
    \end{subfigure}
    \begin{subfigure}[t]{0.495\textwidth}
      \centering
      \includegraphics[width=\linewidth,trim=10 20 10 20]{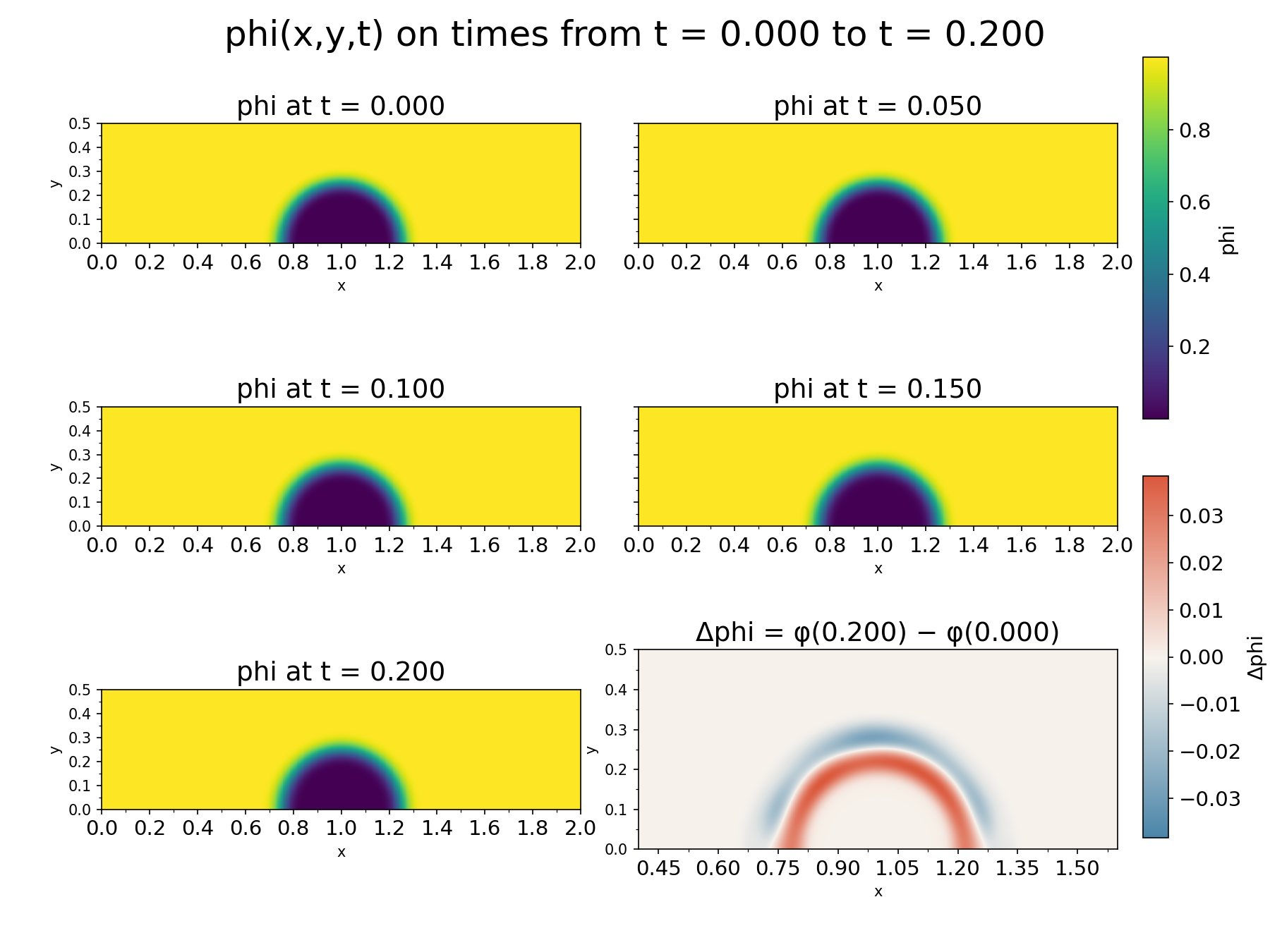}
      \caption{}
    \end{subfigure}
    \caption{On time domain [0, .2], phase field variable $\phi$ (=0 on clot and =1 on blood) on (a)original thrombus system with $\lambda_e=1$ on thrombus and $\lambda_e=0$ on blood in \cite{ktt22}, (b)modified new model with generalized viscoelasticity $\nu(\phi)= (1-\phi) + 10^{-5}\phi$. (a) displays mixed and ambiguous phase field shape change, not describing interplay of dynamics on shock region. However, (b) shows more stable formation for the interface change for the $\phi$ as better performance to catch the quickly changing gradient phenomenon on the shock. }
    \label{fig_compare_clot}
\end{figure}
\\
\\
To be specific, the following system is the original model of this paper's governing system.(\cite{ktt22})
\\
\par
$<$Original model$>$
\begin{equation}\label{ktt_ge}
\left\{ 
\begin{array}{rcl}
\rho(\frac{du}{dt} + u \cdot \nabla u) + \nabla p - \nabla \cdot (\eta(\phi) \nabla u) &=& -\lambda \nabla \cdot (\nabla \phi \otimes \nabla \phi) \\[5pt]
+ \nabla \cdot \left(\lambda_e(1 - \phi)(FF^T - I)\right) &- &\dfrac{\eta(\phi)(1-\phi)u}{\kappa(\phi)}, \\[5pt]
\nabla \cdot u &=& 0, \\[5pt]
\frac{dF}{dt} +u \cdot \nabla F &=& \nabla u F, \\[5pt]
\frac{d \phi}{dt}  + u \cdot \nabla \phi &=& \tau \Delta \mu, \\[5pt]
\mu &=& -\lambda \Delta \phi + \lambda \gamma f'(\phi) - \dfrac{\lambda_e}{2} \text{tr}(FF^T - I),
\end{array}
\right.
\end{equation}
The $\lambda_e$ parameter is assigned positive value only on thrombus and is zero on blood, i.e. as positive constant viscoelasticity for thrombus region of phase field variable $\phi = 0$. This is not physically plausible. Therefore, by introducing the viscoelasticity variable $\nu(\phi)$ distributing positive constants on the pure phase fields of blood and thrombus, we build new model as follows. This yields the energy-dissipation law \eqref{2_27}, which is physically more reasonable since the elastic energy associated with $\nu(\phi)$ is no longer zero in the blood phase. This generalization is written as
\\
\par
$<$Modified model$>$
\begin{equation}\label{nu_ge}
\left\{ 
\begin{array}{rcl}
\rho(\frac{du}{dt} + u \cdot \nabla u) + \nabla p - \nabla \cdot (\eta(\phi) \nabla u) &=& -\lambda \nabla \cdot (\nabla \phi \otimes \nabla \phi) \\[5pt]
+ \nabla \cdot \left(\pmb{\nu(\phi)}(FF^T - I)\right) &- &\dfrac{\eta(\phi)(1-\phi)u}{\kappa(\phi)}, \\[5pt]
\nabla \cdot u &=& 0, \\[5pt]
\frac{dF}{dt} +u \cdot \nabla F &=& \nabla u F, \\[5pt]
\frac{d \phi}{dt}  + u \cdot \nabla \phi &=& \tau \Delta \mu, \\[5pt]
\mu &=& -\lambda \Delta \phi + \lambda \gamma f'(\phi) + \dfrac{\pmb{\nu'(\phi)}}{2} \text{tr}(FF^T - I),
\end{array}
\right.
\end{equation}
where $\nu$ satisfies \eqref{visco_diff}.
Thus, substituting $\lambda_e (1 - \phi)$ into $\nu(\phi)$ in the governing equation \eqref{nu_ge} results in the previous model \eqref{ktt_ge} exactly. By building this modified model, we can successfully simulate the mixed region of blood and thrombus as the Figure \ref{fig_compare_clot}. In the figure, we can affirm that the interface region for the thrombus and blood can be more sensitively expressed in the modified model due to introducing $\nu(\phi)$ variable. This is because, in the original model, the viscoelasticity was zero on the pure blood region which does not physically make sense. To be specific, zero viscoelasticity on blood resulted in zero elastic energy on the pure blood region from the following total energy formula 
\begin{equation}\label{total_e}
    E(x,t) := \int_{\Omega} |u|^2 + \lambda |\nabla \phi|^2 + 2 \lambda \gamma f(\phi)  + { \nu(\phi)} tr(FF^T -I ) dx
\end{equation}
as in \eqref{2_27}. Elastic energy means the potential energy from the deformation variable $F$ to be stretched as time goes by and this leads the movement of the $F$ until it is equal to $I$ at the static equilibrium status. Therefore, setting the viscoelasticity as zero on pure blood field is not sound assumption physically to describe the total energy and evolutionary movement of dynamics of the original thrombus system \eqref{ktt_ge}. As a result of this discussion, we was able to see the development on the better simulation profile on $\phi$ on the interface shock area which is challenging for almost numerical simulations on phase field governing system.
\\
\par
Next, we developed the model continuously as \eqref{governing_system} from the modified one \eqref{nu_ge}, adding the diffusion term $\pmb{k (\nu (\phi) \Delta F +2 \nabla \nu (\phi) \cdot \nabla F )}$. As we already discussed in the previous chapters, this new model satisfied energy dissipation as \eqref{2_27} and local well-posedness as Theorem \eqref{Thm_local_wp} with added neumann boundary conditions as \eqref{bdry}.
\\
\\
\begin{figure}[H]
  \centering
    \begin{subfigure}[t]{0.495\textwidth}
      \centering
      \includegraphics[width=\linewidth,trim=35 20 10 20]{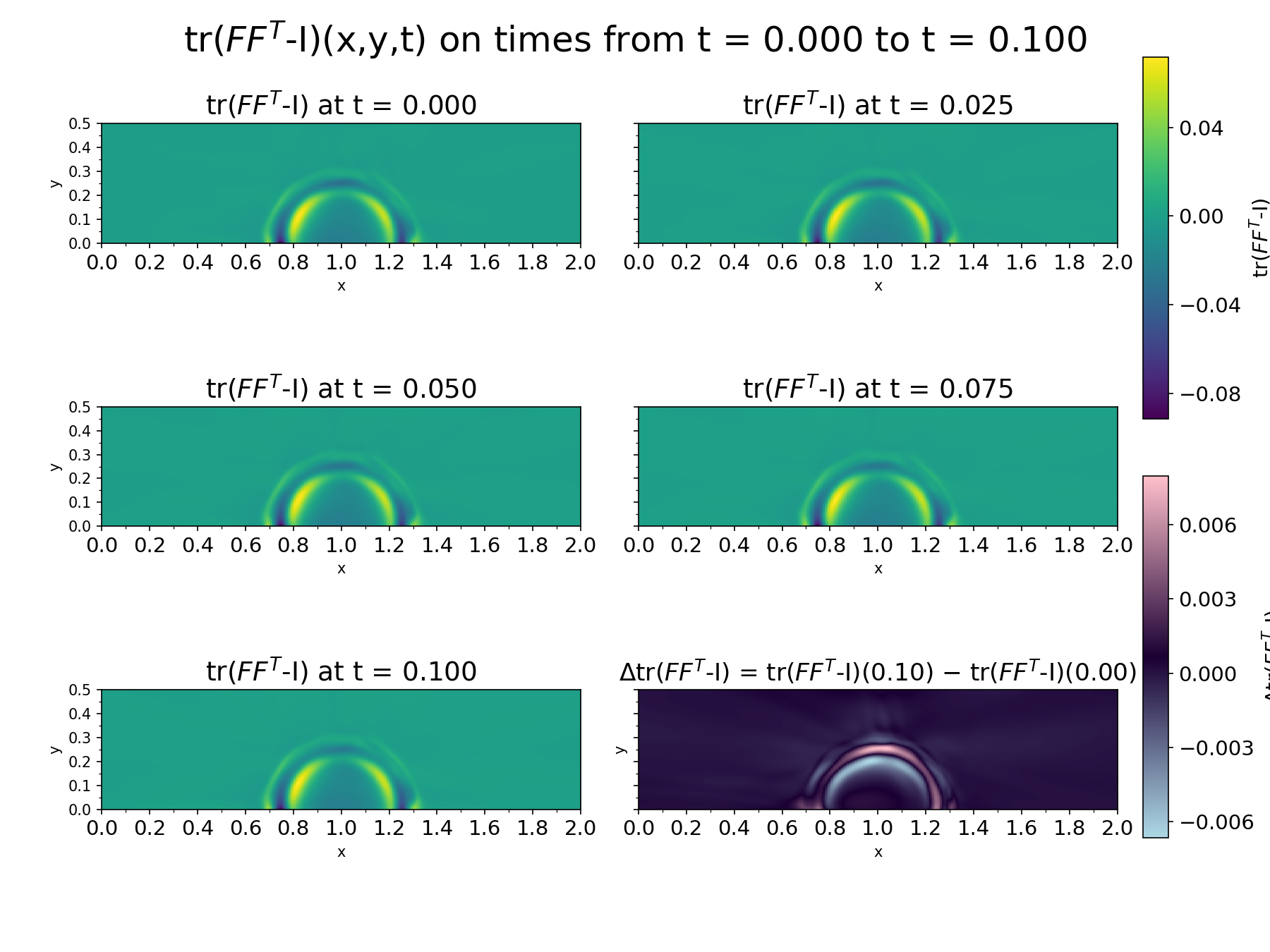}
      \caption{}
    \end{subfigure}
    \begin{subfigure}[t]{0.495\textwidth}
      \centering
      \includegraphics[width=\linewidth,trim=13 20 20 20]{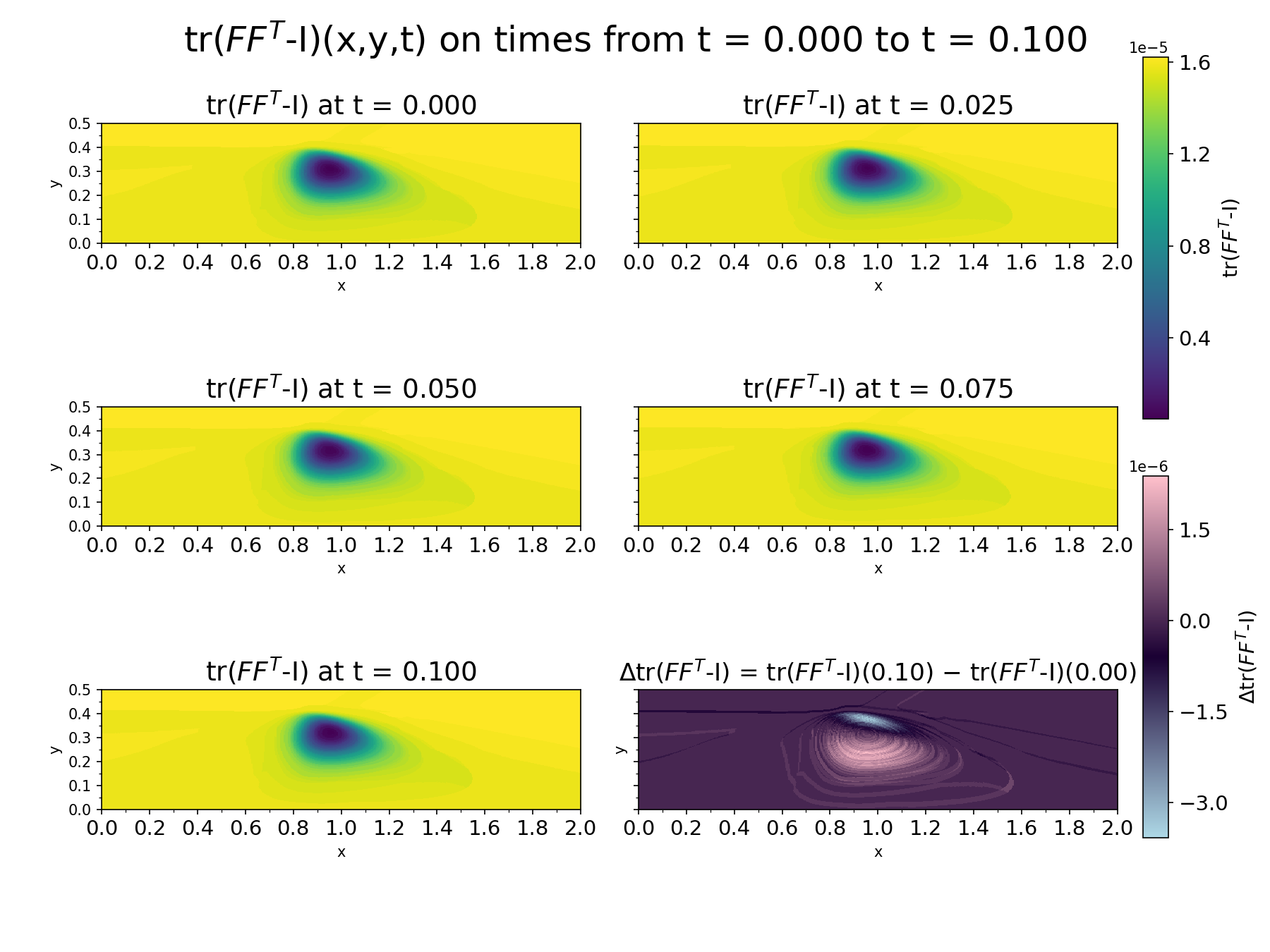}
      \caption{}
    \end{subfigure}
    \caption{Change of $tr(FF-I)$ (a)\eqref{nu_ge} and (b)\eqref{governing_system}.}
    \label{fig:F_change}
\end{figure}
Comparing Figures \ref{fig:F_change}, \ref{fig:phi_change} we see that diffusion slows the variation of $F$ away from the initial state $F = I$, more than it does to $\phi$, specifically, the variation in F decreases from around $10^{-3}$
 to around $10^{-6}$ level whereas that of $\phi$ stays near $10^{-3}$.
We interpret this as the diffusion by absorbing the change of $F$ on the interface area via equation $\eqref{new_system}_3$ by mixing up the values of $F$. As shown in the Figure \ref{fig:F_change}, another reason for the reduced variation in $F$ is the presence of the Neumann boundary condition.
\begin{figure}[H]
  \centering
    \begin{subfigure}[t]{0.495\textwidth}
      \centering
      \includegraphics[width=\linewidth,trim=35 20 10 20]{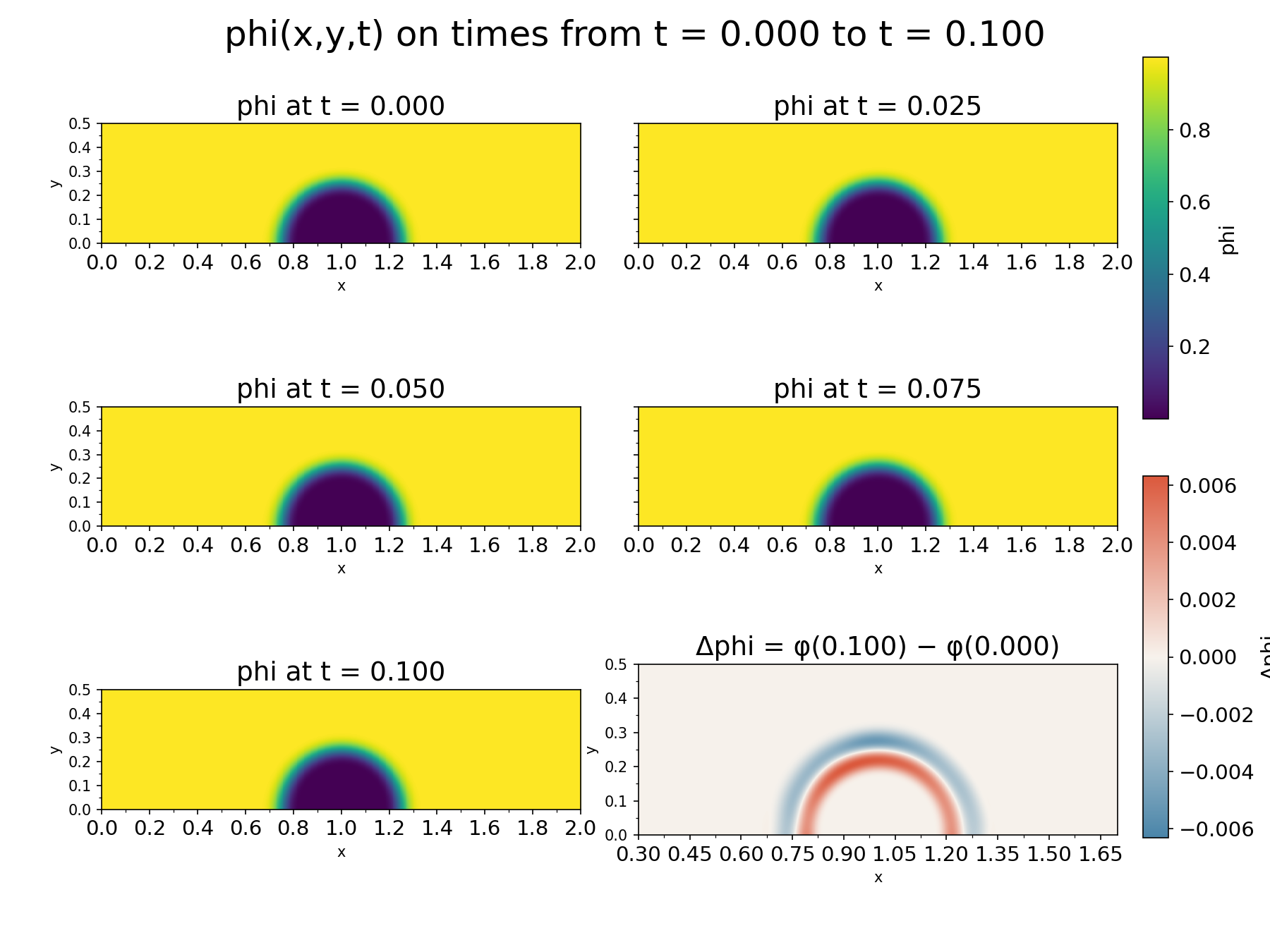}
      \caption{}
    \end{subfigure}
    \begin{subfigure}[t]{0.495\textwidth}
      \centering
      \includegraphics[width=\linewidth,trim=13 20 20 20]{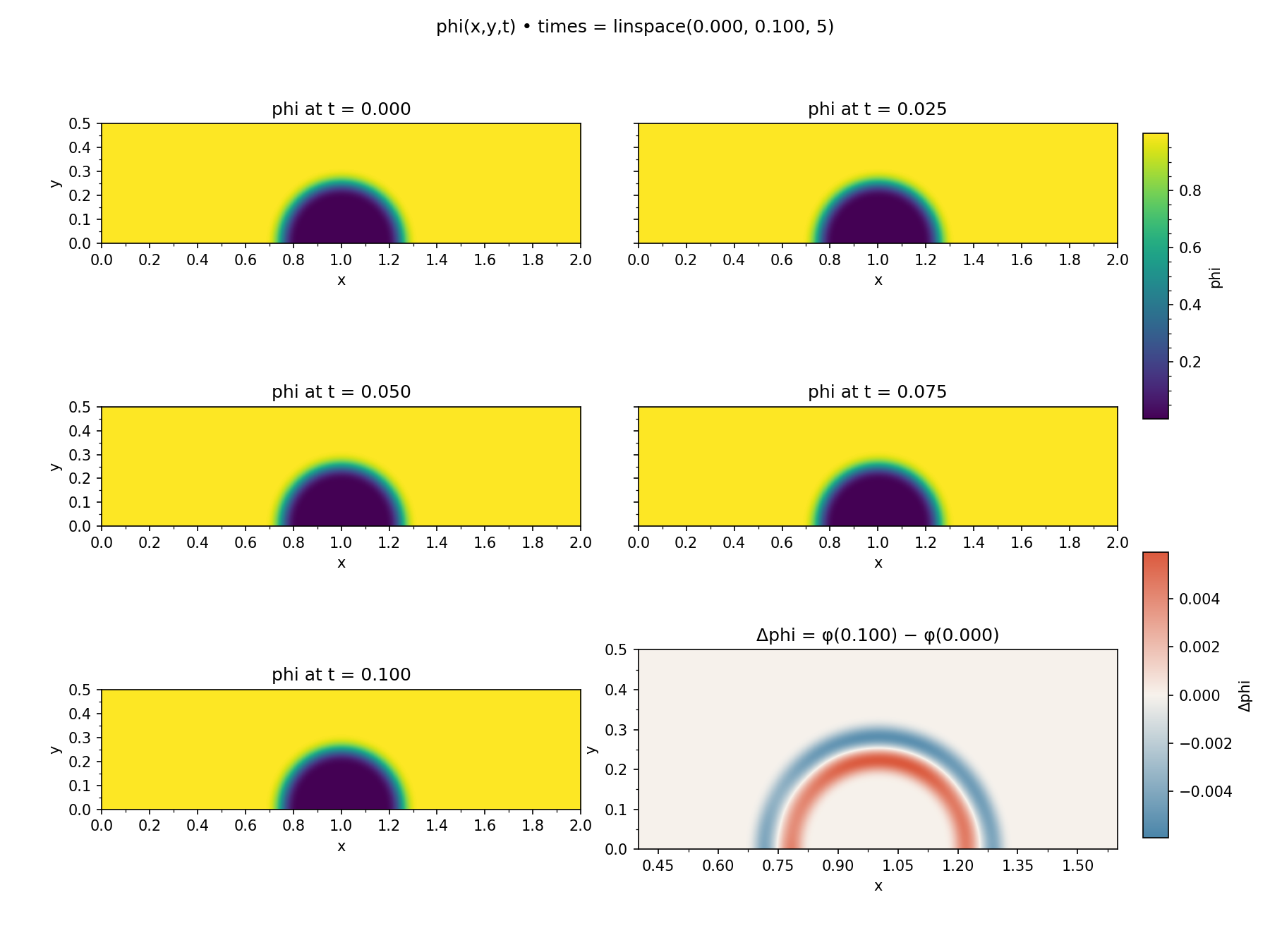}
      \caption{}
    \end{subfigure}
    \caption{Change of $\phi$ (a)\eqref{nu_ge} (b)\eqref{governing_system}}.
    \label{fig:phi_change}
\end{figure}
We next study the behavior of \eqref{governing_system} under different settings. We take the viscoscity to be $\eta = (\phi)\eta_b + (1 - \phi)\eta_t$ and permeability to be $\kappa = (\phi)\kappa_b + (1 - \phi)\kappa_t$, wherer the subscripts $b$, $t$ refer to blood thrombus, respectively.
\\
\subsection{Neural network architecture and technical training setting} For the neural network architecture, we used a network with 10 layers and 128 neurons per layer. The input variables are
 $(x, y, t)$, and the outputs are $(u_1, u_2, \phi, F^{11}, F^{12}, F^{21}, F^{22}, p)$, corresponding to the variables in the governing system \eqref{governing_system}. For one training window, we sampled 50,000 collocation points in the space-time domain for the PDE residual, 1,000 collocation points for the initial condition, and 800 collocation points on the boundary. The sampling procedure consists of Latin hypercube sampling in space and uniform sampling in time.
We used the Adam optimizer with the hyperbolic tangent activation function.
\\
For the learning-rate schedule, we used 250 epochs at $10^{-3}$
, 200 epochs at $10^{-4}$, and 100 epochs each at $10^{-5}$, $10^{-6}$, and $10^{-7}$. For each epoch, a mini-batch size of $500$ was used for the $50,000$ domain collocation points, resulting in $100$ iterations per epoch.
\\
\par
In this paper, all numerical simulations were carried out using the window-sweeping method(\cite{sweeping23}). This method is based on the time-marching method(See \cite{ac21}), in which PINN training is performed on small subintervals obtained by splitting the time domain.
\\
In \cite{ac21}, the time-marching method is introduced in two forms. First, it splits the whole time domain into unit intervals and trains the PINN on progressively increasing time intervals. Second, on these equally partitioned unit intervals, it uses transfer learning to pass the learned weights from one time interval to the next.
\\
Based on these ideas, the window-sweeping method uses overlapping training windows for consecutive time intervals, as shown in Figure \ref{sweeping}. After training the first initiating time domain via initial condition loss term of given initial condition and other loss terms, the training window then moves to the next unit time interval, as shown in Figure \ref{sweeping}. It consists of a blue block representing the previously trained time interval and a pink block representing the newly trained interval. Let us denote this one block unit train time interval as $\Delta_{unit} t $ and it has default value $0.05$ without specific description in this paper.
\\
\\
On the blue block, which overlaps with the previous time interval, transfer learning enables us to inherit the outputs
$$w_{\text{pred}}^ {\theta^-}(x_i,t_i) \in U^- :=\{ u^{\theta^-}_1,  u^{\theta^-}_2,  \phi^{\theta^-},  f^{\theta^-}_{ij},  p^{\theta^-} (i,j = 1, 2)\}$$ from the previous training stage by minimizing the following loss term:
\begin{equation}\label{sweeping_bc_loss}
    Loss_{\text{com}} = \sum_U^- \sum_i(w_{\text{pred}}^ {\theta^-}(x_i,t_i) - w_{\text{pred}}^{\theta^+}(x_i,t_i))^2 / \text{(number of collocation points)}
\end{equation}
Here,
$$w_{\text{pred}}^ {\theta^+}(x_i,t_i)\in U^+ :=\{ u^{\theta^+}_1,  u^{\theta^+}_2,  \phi^{\theta^+},  f^{\theta^+}_{ij},  p^{\theta^+} (i,j = 1, 2)\}$$ denotes the current training outputs corresponding to $w_{\text{pred}}^ {\theta^-}$ at the collocation points $(x_i,t_i)$. We refer to this as backward compatibility. 
\\
When training starts from a nonzero time, this term replaces the initial-condition loss in the total loss. In addition, over the whole block consisting of the blue and pink intervals, we minimize the squared $L^2$ loss for the standard PDE residual:
\begin{equation}
    Loss_{\text{PDE}} = \sum_{i, U^+}(\text{Residual}(w_{\text{pred}}^{\theta^+}(x_i,t_i)))^2 / \text{(number of collocation points)}.
\end{equation}
Here, Residual denotes the sum of the residuals of all equations in the governing system \eqref{governing_system}. Therefore, the total PINN loss is defined by combining all of these terms. This approach allows us to split the time interval into sufficiently small subintervals, which improves accuracy by tracking the dynamics at an appropriate temporal scale.
\\
\par
This procedure is enabled by the transfer-learning mechanism in \eqref{sweeping_bc_loss}. The network weights obtained from the previous time interval are loaded before training on the current time interval begins. As a result, the outputs on randomly sampled collocation points in the backward-compatibility interval remain consistent with those obtained on the previous interval.\\
In this paper, unit train time interval $\Delta_{unit} t$ is set as $0.05$ or $0.1$ with the transfer learning time interval $0.025$ as backward compatibility time interval.
\\
\begin{figure}[htbp]
  \centering
  \begin{subfigure}[t]{0.63\textwidth}
      \centering
      \includegraphics[width=\linewidth,trim=10 5 10 0]{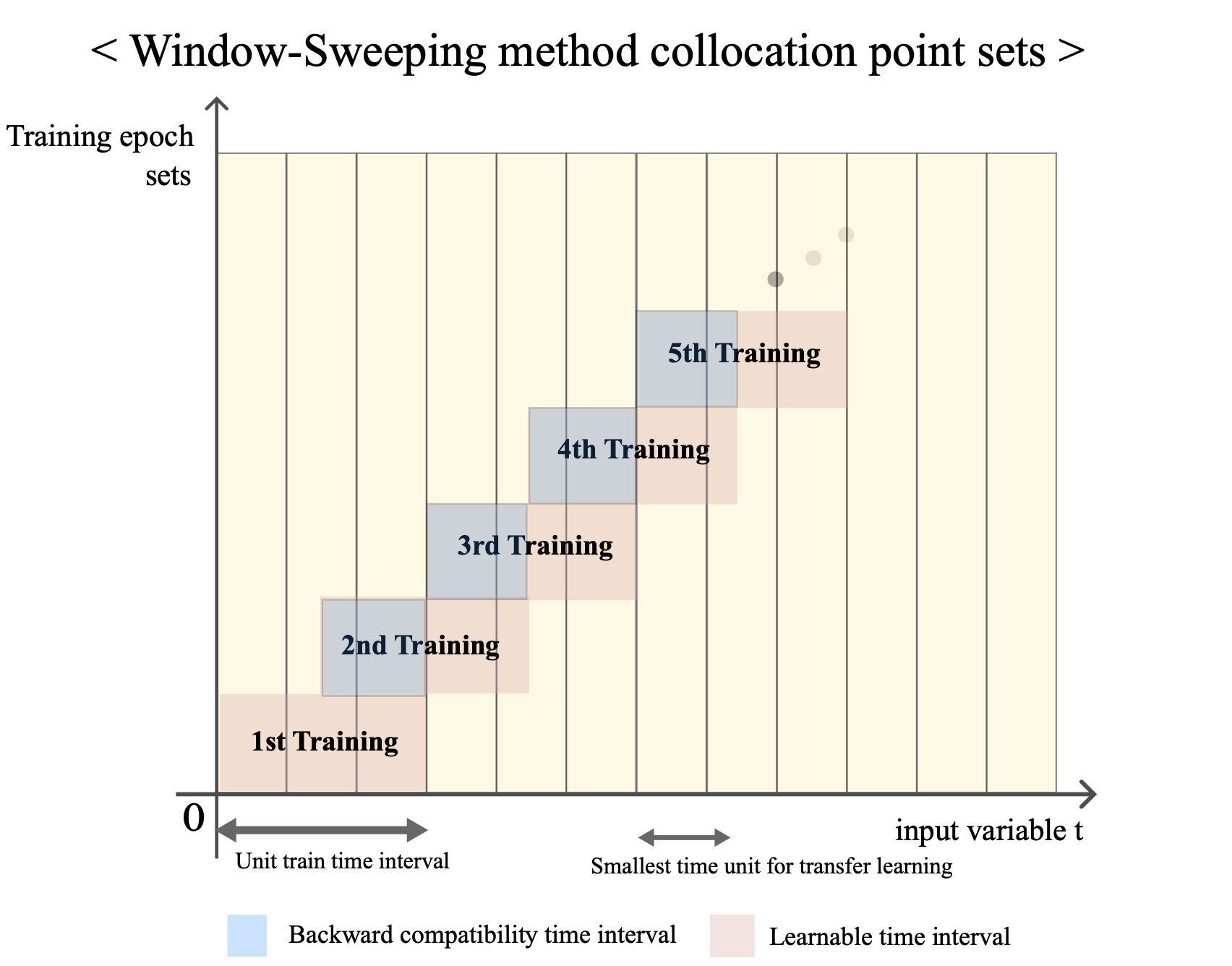}
    \end{subfigure}
    \caption{  }
    \label{sweeping}
\end{figure}
\\
Combining all methods we explained, we define the total loss for the current network parameters $\theta^+$ and the corresponding outputs $u_{\text{pred}}^{\theta^+}(x_i,t_i)$, $\phi_{\text{pred}}^{\theta^+}(x_i,t_i)$, $F_{\text{pred}}^{\theta^+}(x_i,t_i)$, and $p_{\text{pred}}^{\theta^+}(x_i,t_i)$ as follows.
\begin{equation}\label{total_loss}
\begin{aligned}
&Loss_{\text{tot}} := 1000 Loss_{\text{initial}} + 500 Loss_{\text{PDE}} + Loss_{\text{boundary}}
\\& \text{where}
\\&
Loss_{\text{PDE}} = \sum_{j = 1}^5 \sum_i|\text{Residual}_j(u_{\text{pred}}^{\theta^+}(x_i,t_i), \phi_{\text{pred}}^{\theta^+}(x_i,t_i),F_{\text{pred}}^{\theta^+}(x_i,t_i) )|^2/N_{\text{PDE}}
\\& \text{\qquad for the PDE equation residuals from } \eqref{governing_system}_1 - \eqref{governing_system}_4\text{, together with the constraint det}F = 1,
\\&
Loss_{\text{boundary}} = \sum_i(
|u_{\text{pred}}^{\theta^+}(x_i,t_i)- (0,0)|^2
+ |\partial_n \mu(u_{\text{pred}}^{\theta^+}, \phi_{\text{pred}}^{\theta^+}, F_{\text{pred}}^{\theta^+})- 0|^2
\\&+ |\partial_n \phi_{\text{pred}}^{\theta^+}(x_i,t_i)- 0|^2
+ |\partial_n \Delta F_{\text{pred}}^{\theta^+}(x_i,t_i)- 0|^2
+ |\partial_n  F_{\text{pred}}^{\theta^+}(x_i,t_i)- 0|^2
)/N_{\text{boundary}}
\\& \text{\qquad which correspond to the boundary condition residuals in } \eqref{bdry}, 
\\& Loss_{\text{initial}} = 
\begin{cases}
(|u_{\text{pred}}^{\theta^+}(x_i,t_i)- (0, 0)|^2
+ |\phi_{\text{pred}}^{\theta^+}(x_i,t_i)- \phi_0|^2
+ | F_{\text{pred}}^{\theta^+}(x_i,t_i)- I|^2)//N_{\text{initial}} &\\ \qquad \text{if the training time interval starts from zero},
\\
\\
\sum_i(\sum_j((u_j)_{\text{pred}}^ {\theta^-}(x_i,t_i) - (u_j)_{\text{pred}}^{\theta^+}(x_i,t_i))^2
+\sum_{j,k}^2 ((f_{jk})_{\text{pred}}^ {\theta^-}(x_i,t_i) - (f_{jk})_{\text{pred}}^{\theta^+}(x_i,t_i))^2
&\\  \qquad +(\phi_{\text{pred}}^ {\theta^-}(x_i,t_i) - \phi_{\text{pred}}^{\theta^+}(x_i,t_i))^2
+(p_{\text{pred}}^ {\theta^-}(x_i,t_i) - p_{\text{pred}}^{\theta^+}(x_i,t_i))^2
)/N_{\text{initial}}, 
&\\  \qquad \qquad \text{if the training time interval starts from nonzero time}
\end{cases}
\end{aligned}
\end{equation}
where $N$ is collocation points number for each subscript.
For the initial loss term, we applied window-sweeping method to use the transfer learning on the backward compatibility time interval as \eqref{sweeping_bc_loss}. In every numerical experiment, we set the initial conditions $u_0 = 0$ and $F_0 = I$ as reflected in the initial-condition loss term. For $\phi_0$, in one thrombus cases, we use
\begin{equation}
    \phi_0 = 0.5  (1 - (1- 10^{-12})*\text{tanh}(2.6(- \sqrt{(x - x_0)^2 + (y - y_0)^2} + R) / (\sqrt{8}  h)))
\end{equation}
where $x_0 = 1$, $y_0 = 0$ and $R = 0.25$. And, in two thrombi cases, we use scaler of the following smooth minimum function
\begin{equation}
    \phi_0 = (\zeta_0 \zeta_1) / (\zeta_0 + \zeta_1 - \zeta_0 \zeta_1 + 10^{-20})
\end{equation}
where $\zeta_0$ is $\phi_0$ where $x_0 = 1-0.23$, $y_0 = 0$, $R = 0.25$ and $\zeta_1$ is $\phi_0$ where $x_0 = 1+0.23$, $y_0 = 0$, $R = 0.25$.
\\
\par
The PDE solver PINN uses PyTorch. The numerical simulations were carried out on the BigRed200 supercomputer using NVIDIA A100 GPUs at Indiana University.
\\
\subsection{Total energy-adaptive sampling for Auto-adaptive PINN}\label{AAPINN}
Catching the movement of phase field variable $\phi$ on shock interface is one of the most challenging part in phase-field dynamics simulation. Total energy is important to simulate this since $\phi$ evolves toward to decreasing this energy though it has sharp change on some domain area. To address this problem, for some difficult cases, Auto-adaptive PINN in \cite{bk25} employed MCMC Metropolis Hasting approximation based on total energy function. This means that, rather than we use uniform sampling distribution, we use density function related with energy function to sample the training collocation points, and throw more collocation points on rapidly changing energy area. Hence, we address the difficult simulation on highly changing energy region of shock interface. Then, on PDE loss term on the domain $\Omega$, we use Metropolis-Hasting approximating on the density $\rho_{density}$ as
\begin{equation}
\int_0^T \int_{\Omega} Residual(w_{\text{pred}}^ {\theta^+}) \text{ }\rho_{density}(w_{\text{pred}}^ {\theta^+}) dx dt
\end{equation}
as the reference \cite{bk25} used this formula.
\\
\par
Now, we give specific energy formula as the reference which is approximated and related with $\rho_{density}$ on spatial sampling points. This is because, as application of this method on our NSCH thrombus system, we will use more spatial collocation points on the higher total energy \eqref{Total_Energy} for sampling the PINN training points.
\\
In our multi-variable coupling NSCH system, to prevent blowing up of simulation output variable $\phi$ on the thrombus-blood interface shock area, we should find the direction of $E(x,t)$ in which it decreases most quickly. To resolve this via differentiating $E(x,t)$, it is impossible since this is integrated value. In this situation, we conventionally use first variation of total energy functional for the $\phi$.
\\
When we denote first variation for $E(\mathbf{x},t)$ with respect to $\phi$ as $\frac{\delta E(\phi, \cdot)}{ \delta \phi}: \psi \rightarrow \frac{d}{d \epsilon} E(\phi + \epsilon \psi) \big|_{\epsilon = 0} $ where $\psi \in L^2(\Omega)$,
\begin{equation}\label{E_x_PINN}
    \begin{aligned}
        &\frac{\delta E(\phi, \psi)}{ \delta \phi} = \frac{d}{d \epsilon}E(\phi + \epsilon \psi)\big|_{\epsilon = 0} 
        \\&
        = \frac{1}{d \epsilon}(\int_{\Omega} \lambda |\nabla (\phi + \epsilon \psi)|^2 + 2 \lambda \gamma f(\phi + \epsilon \psi)  + { \nu(\phi)} tr(FF^T -I ) dx) \big|_{\epsilon = 0} 
        \\&
        = \int \lambda (2 \nabla \phi ) \nabla \psi  + 2 \lambda \gamma f'(\phi)  \psi+ \nu'(\phi) tr(FF^T -I ) \psi dx 
        \\&
        = \int (- \lambda 2 \Delta \phi     + 2 \lambda \gamma f'(\phi) + \nu'(\phi) tr(FF^T -I )) \psi dx 
        \\& = \int 2 \mu \psi dx.
    \end{aligned}
\end{equation}
In the result of the computation, $\frac{\delta E(\phi, \cdot)}{ \delta \phi} = 2(\mu, \cdot)$ and this means the change of $E(x,t)$ comes from perturb of $\phi$. In dynamics system, it is well known that chemical potential $\mu$ is driving force for the evolutionary dynamics of $\phi$. Thus, the last term above agrees with this meaning in physics.
\\
\par
Based on the result \eqref{E_x_PINN}, by focusing on mixed energy term of \eqref{E_decom}, we can take $\psi = \nabla \phi$ to see 
\begin{equation}\label{AA_e1}
    \begin{aligned}
        &\int_{\Omega} (- \lambda 2 \Delta \phi     + 2 \lambda \gamma f'(\phi) + \nu'(\phi) tr(FF^T -I )) \nabla \phi dx
        \\& =
        \int_{\Omega} 2 (\lambda \nabla \phi \nabla^2 \phi + \lambda \gamma f'(\phi) \nabla \phi + \nu'(\phi) \nabla \phi tr(FF^T -I )) dx
    \end{aligned}
\end{equation}
To determine input for this first variation functional to use it as the target energy in Metropolis hasting process for AA-PINN, observe gradient of total energy $E(x,t)$ with respect to each spatial variable is composed of
\begin{equation}\label{AA_e2}
    \begin{aligned}
        &\nabla E(x,t)
        \\&=\nabla (|u|^2 + \lambda |\nabla \phi|^2 + 2 \lambda \gamma f(\phi) + \nu'(\phi)tr(FF^T-I))
        \\&= 2 (u \nabla u +\lambda \nabla \phi \nabla^2 \phi + \lambda \gamma f'(\phi) \nabla \phi + \nu'(\phi) \nabla \phi tr(FF^T -I )+ \sum_{i,j} \nu (\phi) \nabla F^{i,j} F^{i,j} )
    \end{aligned}
\end{equation}
\\
Considering that the main changing part in the shock region comes from $\nabla \phi$, coincident part associated with $\nabla \phi$ in the formula \eqref{AA_e1} and \eqref{AA_e2} in weak sense is the target energy function in the Metropolis-Hasting process to approximate probability density on sampling training points in $\Omega$. More specifically, for each time uniformly distributed, the function 
\begin{equation}
    E(x,t)_{AA} := 2 (\lambda \nabla^2 \phi + \lambda \gamma f'(\phi)+ \nu'(\phi) tr(FF^T -I ))\nabla \phi 
\end{equation}
is used as point-wise energy density on spatial domain as in section 2.2 of the AA-PINN paper \cite{bk25}. Based on this modified formula, we follow PINN training steps for the Algorithm 1 in the paper.
\\
\par
By using this variation of AA-PINN method, in the challenging cases to simulate shock region of $\phi$ which are marked as (E) in the section \ref{case_study}, our PINN solver narrows down the difference between predicted output ${\phi}^{\theta}$ of neural network training and PINN output solution $\phi$ satisfying the governing system. In the case with two thrombi in the section, for example, we can check the initial $\phi$ configuration is challenging work to see the clean simulation output as fitted with initial condition for the $\phi$ (See Figure \ref{clots_initial_phi_E} (a)). This is because the two thrombi initial configuration is very far from the solution formula form for minimizing the mixed energy, which is known as the tangent hyperbolic function profile as shaping of one thrombus. The minimizing formula has the graph as we discussed in figure \ref{fig_compare_clot} (b).
\\
\\
To overcome this problem, we applied AA-PINN energy-adaptive sampling method as we discussed. Figure \ref{fig_sampling_E} show the sampling distribution, which approximates the energy function $E(x,t)_{AA}$ well.
\\
After applying this sampling, on Figure \ref{clots_initial_phi_E}, the $L^{\infty}(\Omega)$ error relative to initial configuration condition $\phi \big|_{t = 0}$ decreased {from $0.02560$ to $0.00418$, corresponding to a $75.0412 \%$ redduction.}
\begin{figure}[H]
  \centering
    \begin{subfigure}[t]{0.495\textwidth}
      \centering
      \includegraphics[width=\linewidth,trim=35 20 10 20]{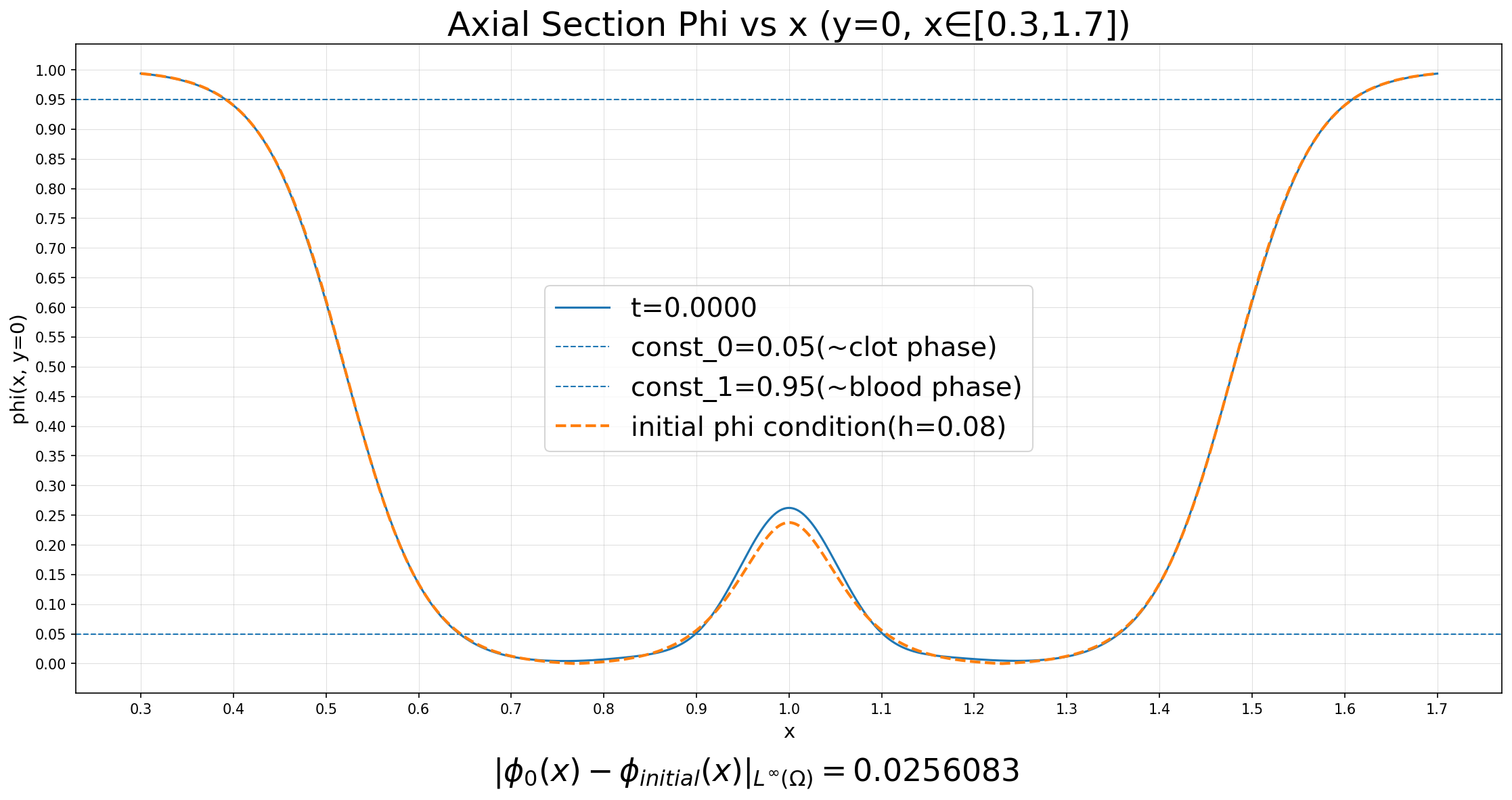}
      \caption{}
    \end{subfigure}
    \begin{subfigure}[t]{0.495\textwidth}
      \centering
      \includegraphics[width=\linewidth,trim=13 20 20 20]{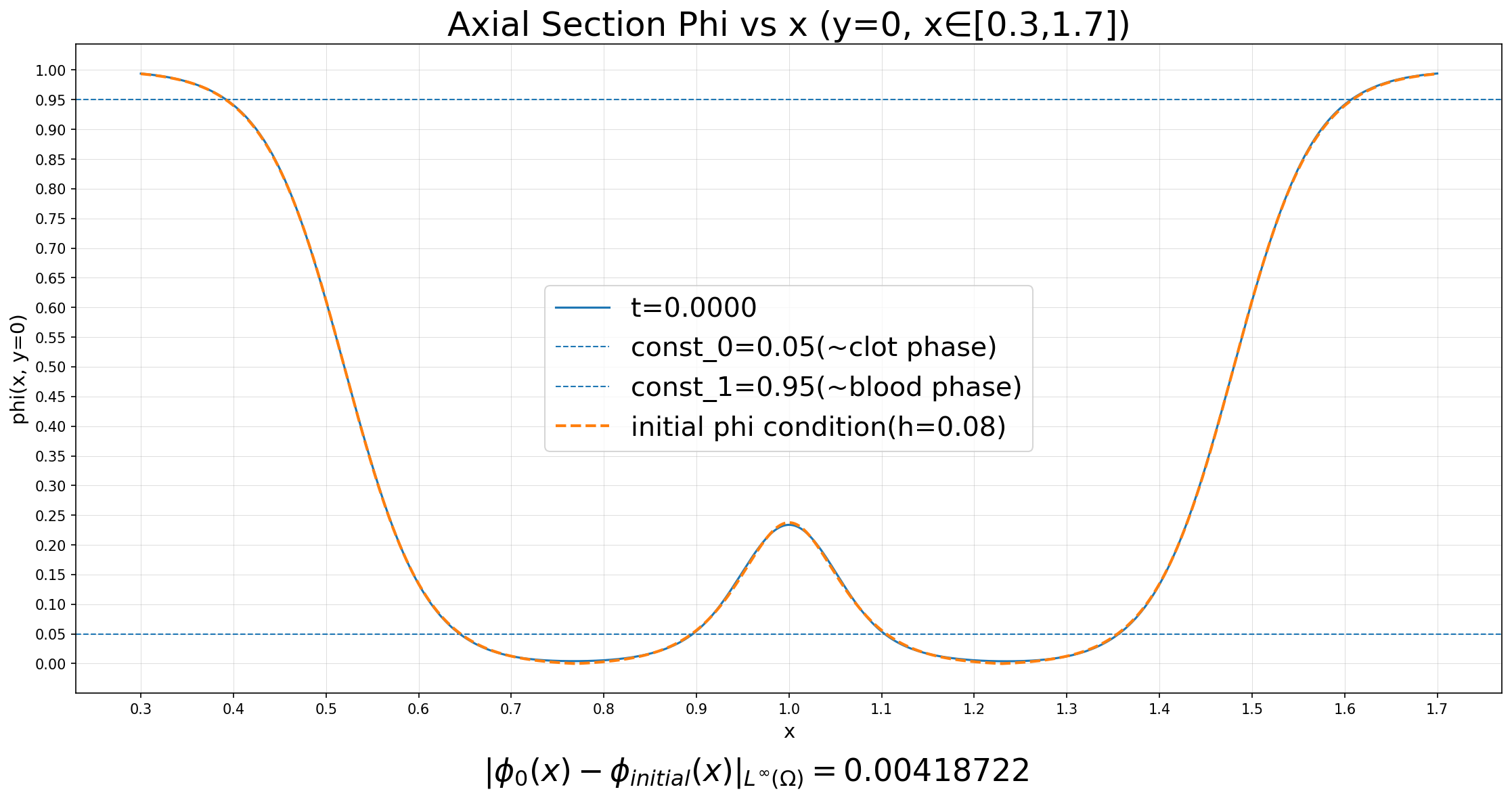}
      \caption{}
    \end{subfigure}
    \caption{(a) Before applying AA-PINN method and (b) after applying this on same experiment setting as Figure \ref{fig_sampling_E}.}
    \label{clots_initial_phi_E}
\end{figure}
\subsection{Parameter selection and case study}\label{case_study}
There are several different cases with different initial phase-field variable $\phi_0$ and various physical parameters on this system. For density $\rho$, we fixed this parameter as $1$ in this paper. Also, deformation variable diffusion term has coefficient $k = 10^{-5}$ in this numerical section.
\\
\par
To specify each experimental case, we first choose the physical parameters $\eta$, $\nu$ and $\tau$, which determine the strengths of the diffusive terms $\Delta u$, $\Delta F$ and $\Delta^2 \phi$. Based on this selection, secondly, other parameters $\kappa$, $\lambda$ and $\gamma$ are chosen to decide permeability or diffusive status of thrombus from initial thrombus configuration. We focus on the rapid variation of the phase-field variable 
$\phi$, since the interfacial region is the most dynamically active part of the system. For this rapid change from mixed energy $E_m(x,t)$, dominant change can result from the comparison of the coefficients $\lambda$ of $|\nabla \phi|^2$ term and $\lambda \gamma$ of $f (\phi) = \frac{1}{4h}\phi^2(1-\phi)^2$. To be specific, dissipative total energy drives whole system toward decreasing the energy and mixed energy $|\nabla \phi|^2$ term evolves toward diffusive $\phi$ profile to decrease steepness of interface. In the same way, double well potential term $f(\phi)$ of the mixed energy polarize the phase field $\phi$ as $0$ or $1$ to decrease itself. 
\\
\par
Therefore, a larger value of $\lambda$ leads to a more diffuse thrombus profile. For CASE B and B$^{\prime}$ in Table \ref{tab:cases-params}, compare them in Figure \ref{fig:diffuse}. 
Increasing $\lambda \gamma$ leads to a clearer separation between the blood and thrombus phases. For CASE C and C$^{\prime}$ in Table \ref{tab:cases-params}, compare them to see clear gathering of clots when we put two thrombi as initial configuration with higher $\lambda \gamma$ in Figure \ref{fig_sampling_E}.
{\footnotesize
\renewcommand{\arraystretch}{1.25} 
\begin{table}[H]
\centering
\begin{tabularx}{\linewidth}{P{2.5cm}|Y|Y|Y|Y|Y|Y|Y|Y|Y|Y}
\hline
 &$\eta_b$ & $\eta_t$&$\kappa_b$ &$\kappa_t$ & $\nu_b$&$\nu_t$ & $h$ & $\gamma$ & $\tau$& $\lambda$ \\
\hline\hline
Case A \qquad \quad Base line (static)& 
5*1e-1& 1& 5*1e4& 1e-4& 1e-5& 1& .08& 1e-1& 1e-4& 1e-3\\
\hline
Case B \qquad Diffusive thrombus & 
5*1e0& 1e1& 5*1e4&1&  5*1e-2& 1e-1& .08& 5*1e-2& 1e-2& 2*1e-3 \\
\hline
Case B\' \qquad Diffusive thrombus & 
5*1e0& 1e1& 5*1e4& 1& 5*1e-2& 1e-1& .08& 1e-1& 1e-2& 1e-3 \\
\hline
Case C (E) \qquad \quad Two thrombi & 
5*1e0& 1e1& 5*1e4& 1& 5*1e-2& 1e-1& .08& 5& 1e-2& 1e-3 \\
\hline
Case C\' ~(E) \qquad \quad Two thrombi & 
5*1e0& 1e1& 5*1e4& 1& 5*1e-2& 1e-1& .08& 1e-3& 1e-2& 1e-3 \\
\hline
Case D (E) \qquad \quad Thin interface& 
5*1e-1& 1& 5*1e4& 1e-4& 1e-5& 1& .035& 1e-1& 1e-4& 1e-3\\
\hline
Case D\' \qquad \quad Thin interface& 
5*1e-1& 1& 5*1e4& 1e-4& 1e-5& 1& .05& 1e-1& 1e-4& 1e-3\\
\hline
\end{tabularx}
\caption{Parameter settings for each case. For the challenging cases marked (E), PINN solver employed total energy-adaptive sampling from Auto-Adaptive PINN(\cite{bk25}).}
\label{tab:cases-params}
\end{table}
}
Also, note that we use $\nu(\phi)$ as cubic polynomial $\nu(0) + (\nu(1) - \nu(0) )\phi^2(3-2\phi)$ with positive viscoelasticity constants $\nu(1)$ on blood and $\nu(0)$ on thrombus without specific description. This is for simulating the system with higher distinguishable sensitivity from high-order differentiability. 
\\
The previous model used linear polynomial for this parameter in \cite{ktt22} and this brought catastrophic discontinuity on the $\nu'(\phi)$ since the range of this is discrete as three constants on pure blood region, whole interface region and pure clot region in split.
\\
\par
To format the result figures, we organize them using some of 1) bench marks error, 2) PINN training loss, 3) axial graph for phase field variable and 4) energy dissipation tracking, mostly until the system approach static status with small enough change of total energy at the last training time interval of length $\Delta_{unit} t = 0.05$ or $\Delta_{unit} t = 0.1$. 
\\
\par
\textbf{Case A. Static thrombus case (base line experiment)}
This is baseline case described in Table \ref{tab:cases-params}. It shows static movement for thrombus as time increases since all the training time interval $\Delta t$ resuls in the change of energy under $10^{-5}$(see Figure \ref{D1008_e}). Also, in Figure \ref{D1008}, it shows change of $\phi$ in $10^{-3}$ level without showing proper movement of axial section. This case takes a role of baseline experiment for solution dynamics of clots or benchmark result errors to compare with other thrombus cases. Simulation results can be described as the static axial evolution as Figure \ref{D1008}.
\\
\begin{figure}[H]
  \centering
    \begin{subfigure}[t]{0.49\textwidth}
      \centering
      \includegraphics[width=\linewidth,trim=35 3 5 20]{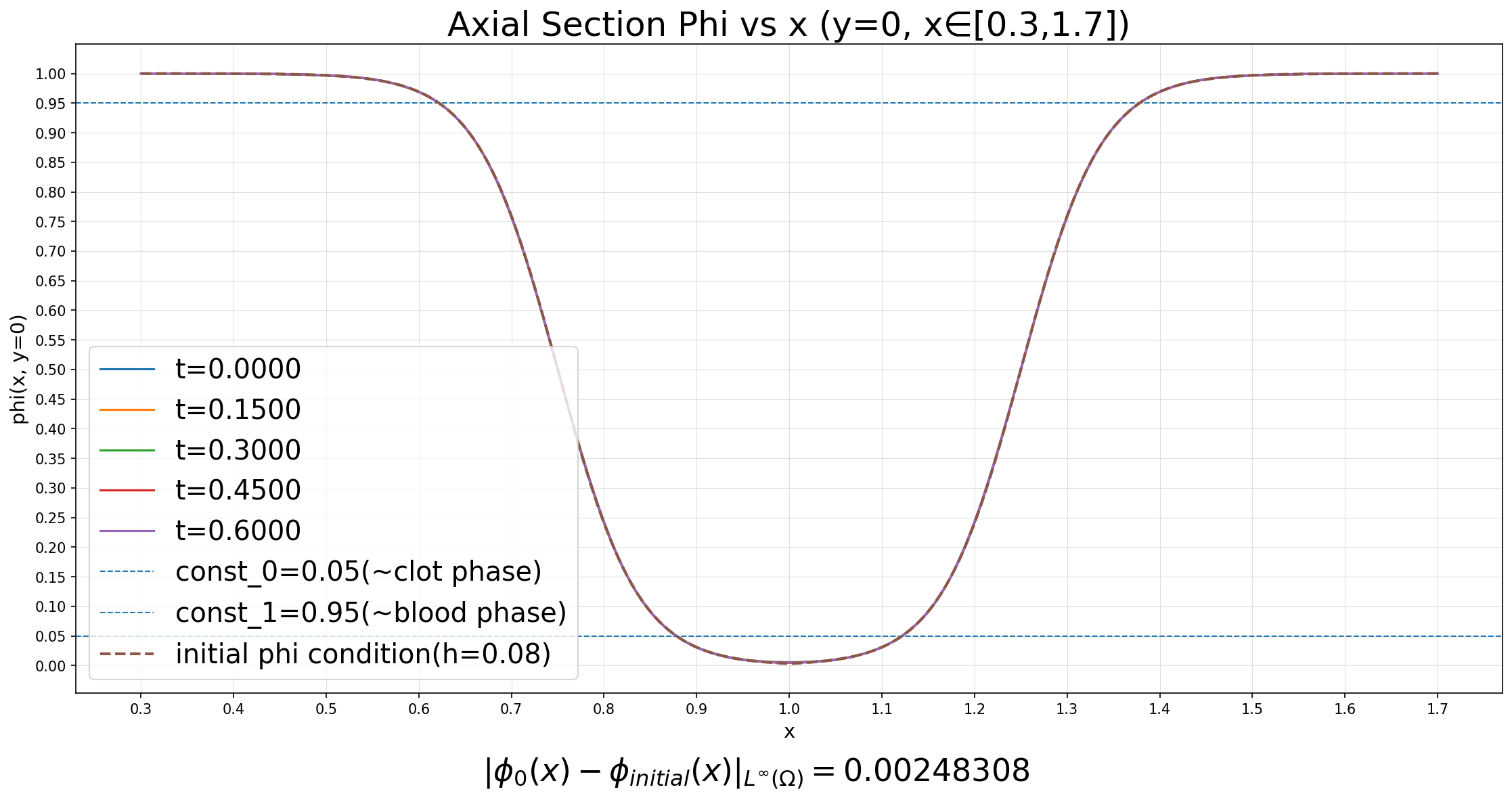}
      \caption{}
    \end{subfigure}
    \begin{subfigure}[t]{0.48\textwidth}
      \centering
      \includegraphics[width=\linewidth,trim=13 3 20 20]{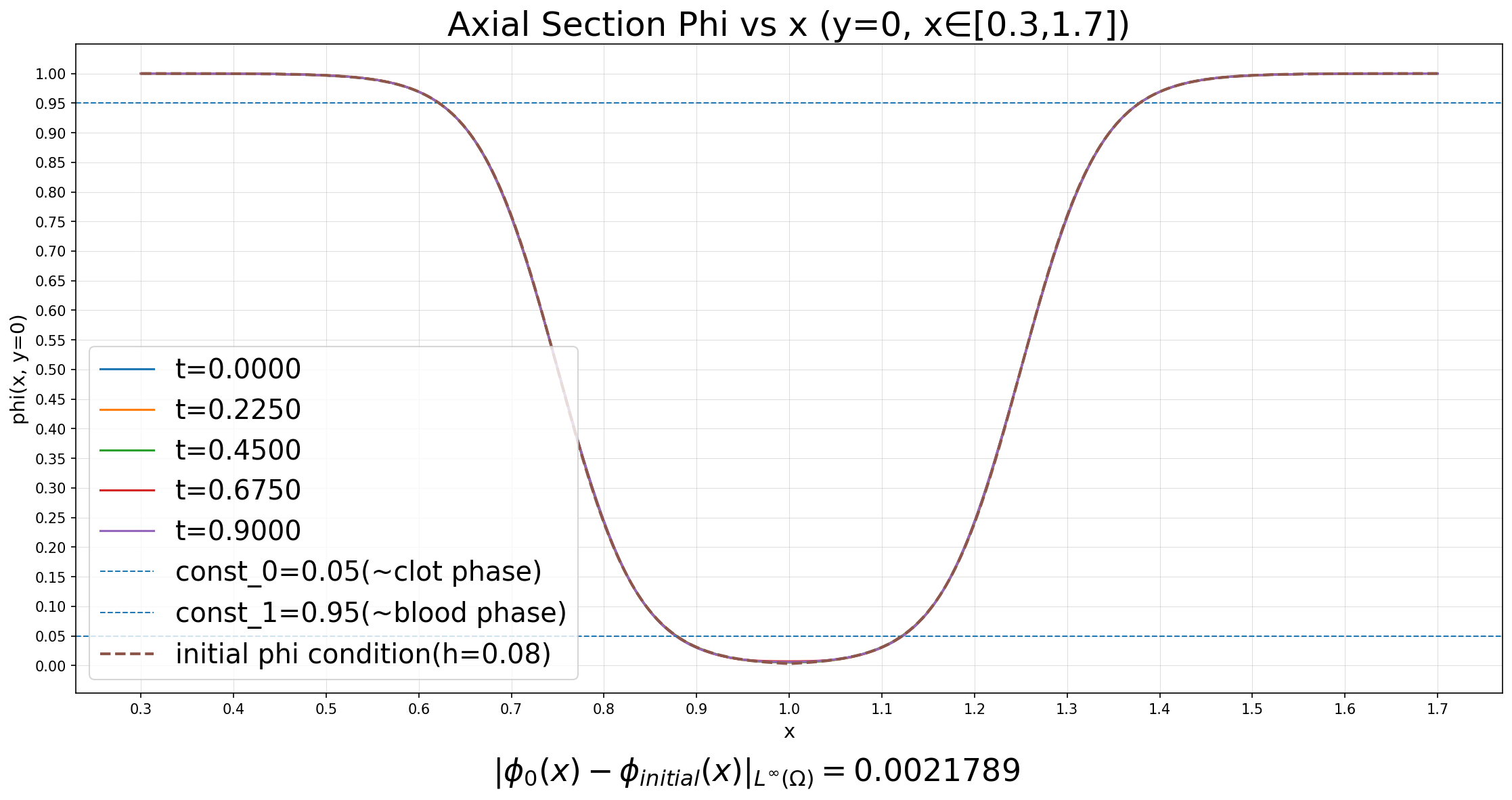}
      \caption{}
    \end{subfigure}
    \par\medskip 
    \begin{subfigure}[t]{0.48\textwidth}
      \centering
      \includegraphics[width=\linewidth,trim=13 20 10 20]{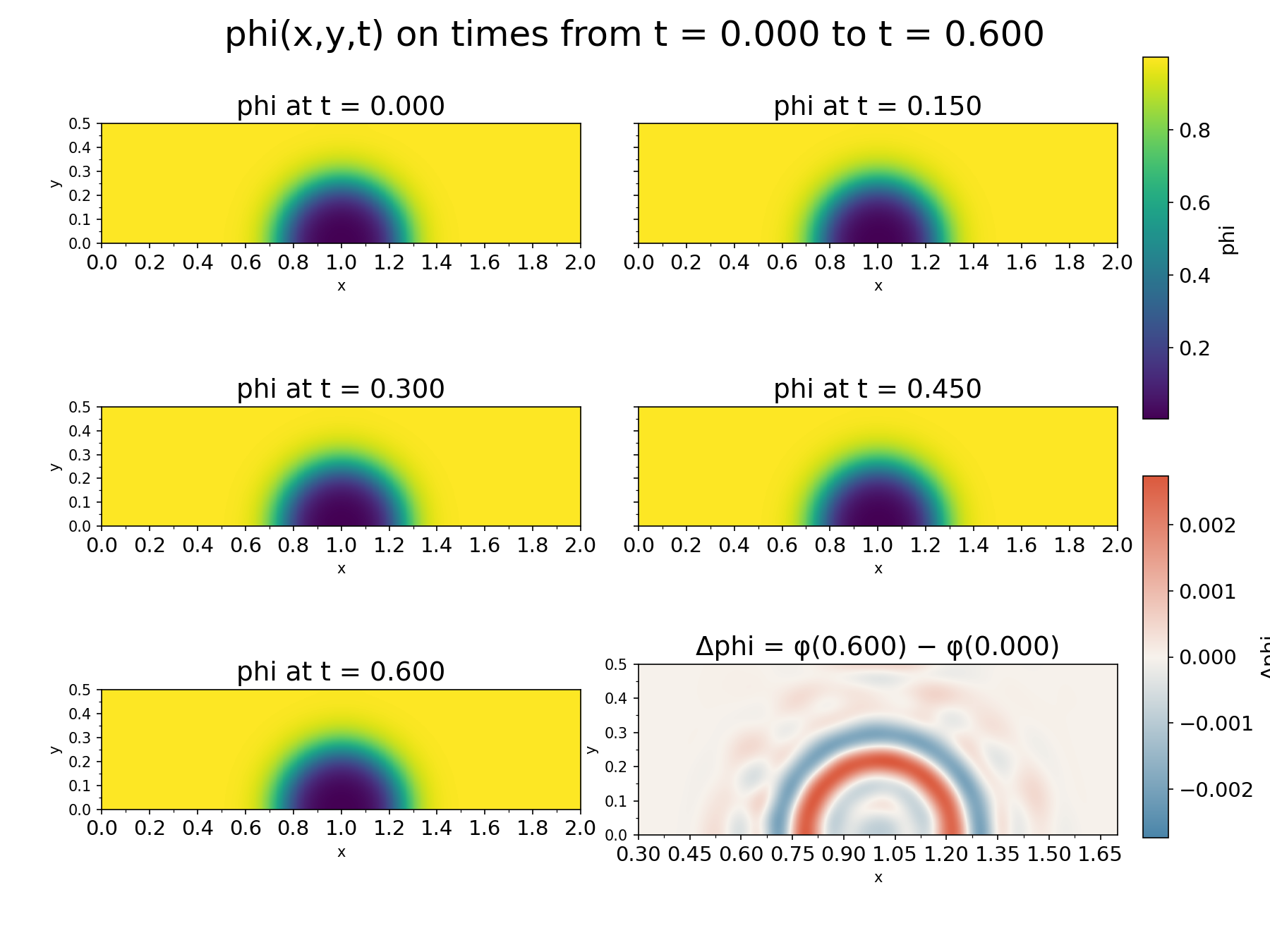}
      \caption{}
    \end{subfigure}
    \begin{subfigure}[t]{0.5\textwidth}
      \centering
      \includegraphics[width=\linewidth,trim=13 0 20 100]{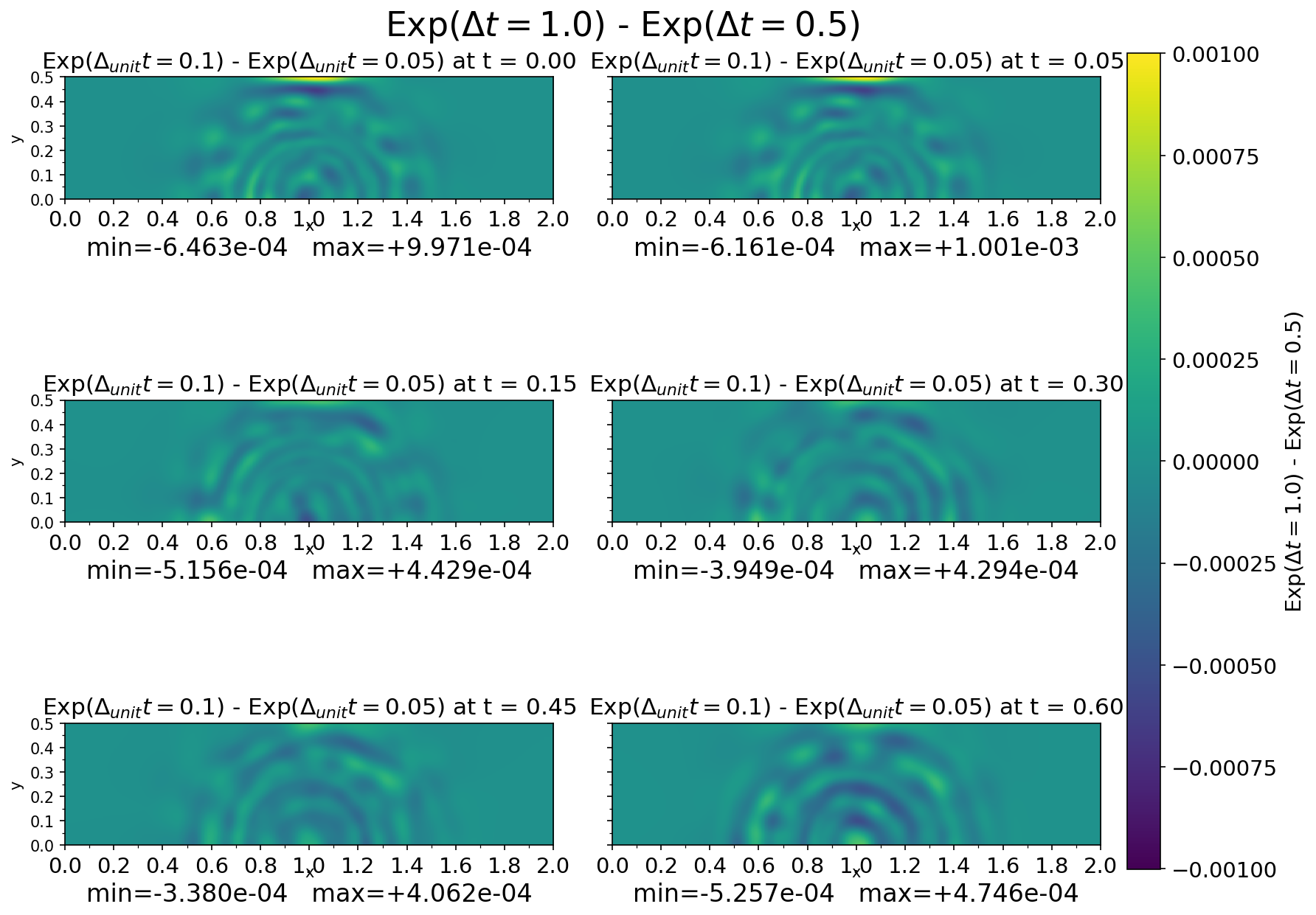}
      \caption{}
    \end{subfigure}
    \caption{Axial section figures of evolution for each unit train time interval $\Delta_{unit} t = 0.05$(a) and $\Delta_{unit} t = 1.0$(b). These have parameter as in the Case $A$ in Table \ref{tab:cases-params} and the clots are static with small enough change of $\phi$ value as we intended with the parameter setting. (c) is phi profile for For $\Delta_{unit} t = 0.05$ and (d) is difference for the both cases of $\Delta_{unit} t = 0.05$ and $\Delta_{unit} t = 0.1$. On plot (d), at $t = 0.05, 0.15, 0.45$, the both cases are in different in the view that one is at the end of the unit train time interval of $0.05$(Exp$(\Delta t = 0.05)$ and another is in the middle in the time interval of $0.1$(Exp$(\Delta t = 0.1)$. The separating interval of transfer learning points as $t = 0.15, 0.45$ of unit train time interval $\Delta_{unit} t = 0.05$ have small error bound compared with $\Delta_{unit} t = 0.1$ case which does not have transfer learning at the points. This means transfer learning perform well with small error.}
    \label{D1008}
\end{figure}
\begin{figure}[H]
  \centering
    \begin{subfigure}[t]{0.49\textwidth}
      \centering
      \includegraphics[width=\linewidth,trim=35 0 5 10]{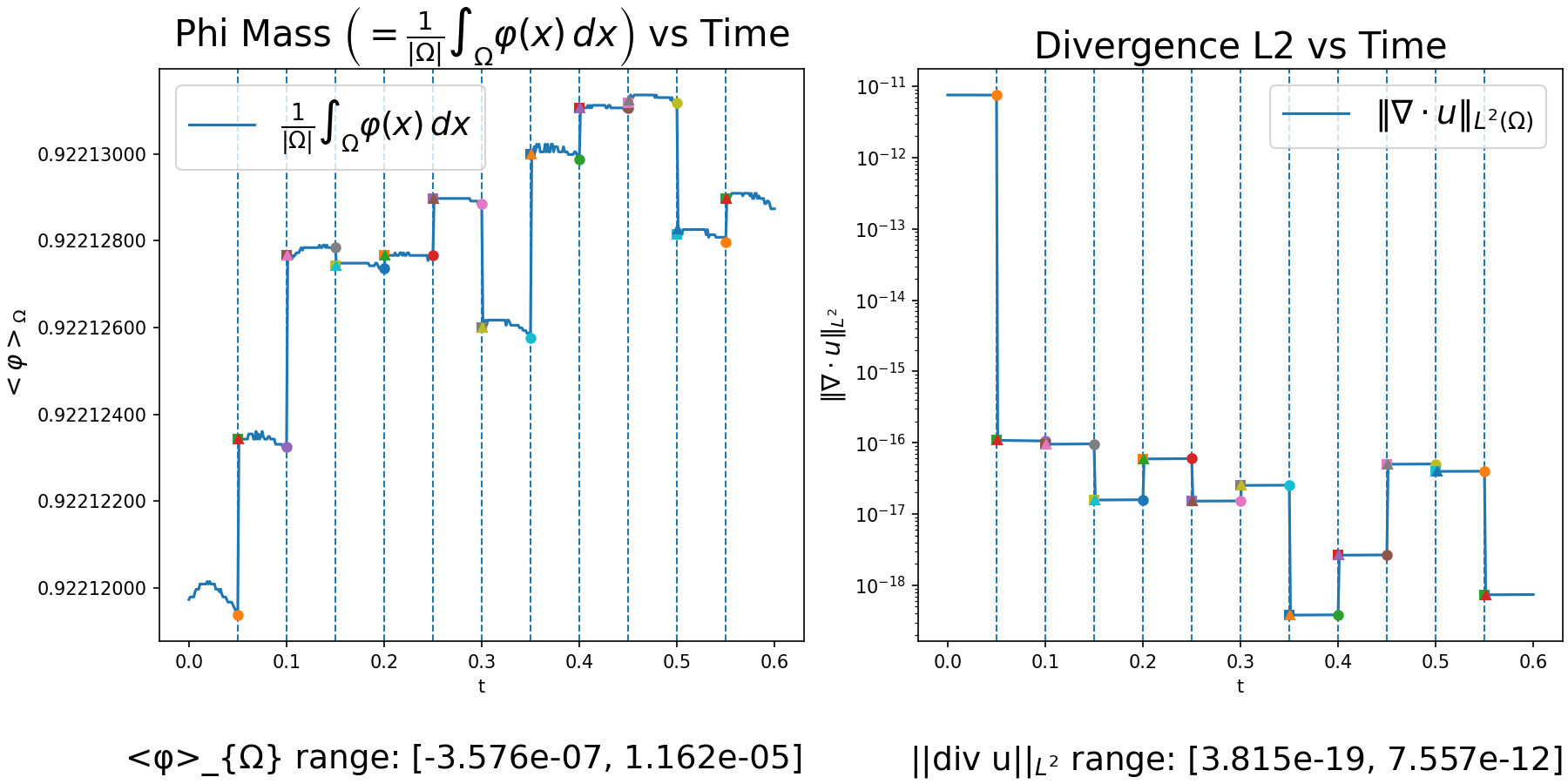}
      \caption{}
    \end{subfigure}
    \begin{subfigure}[t]{0.49\textwidth}
      \centering
      \includegraphics[width=\linewidth,trim=13 0 20 10]{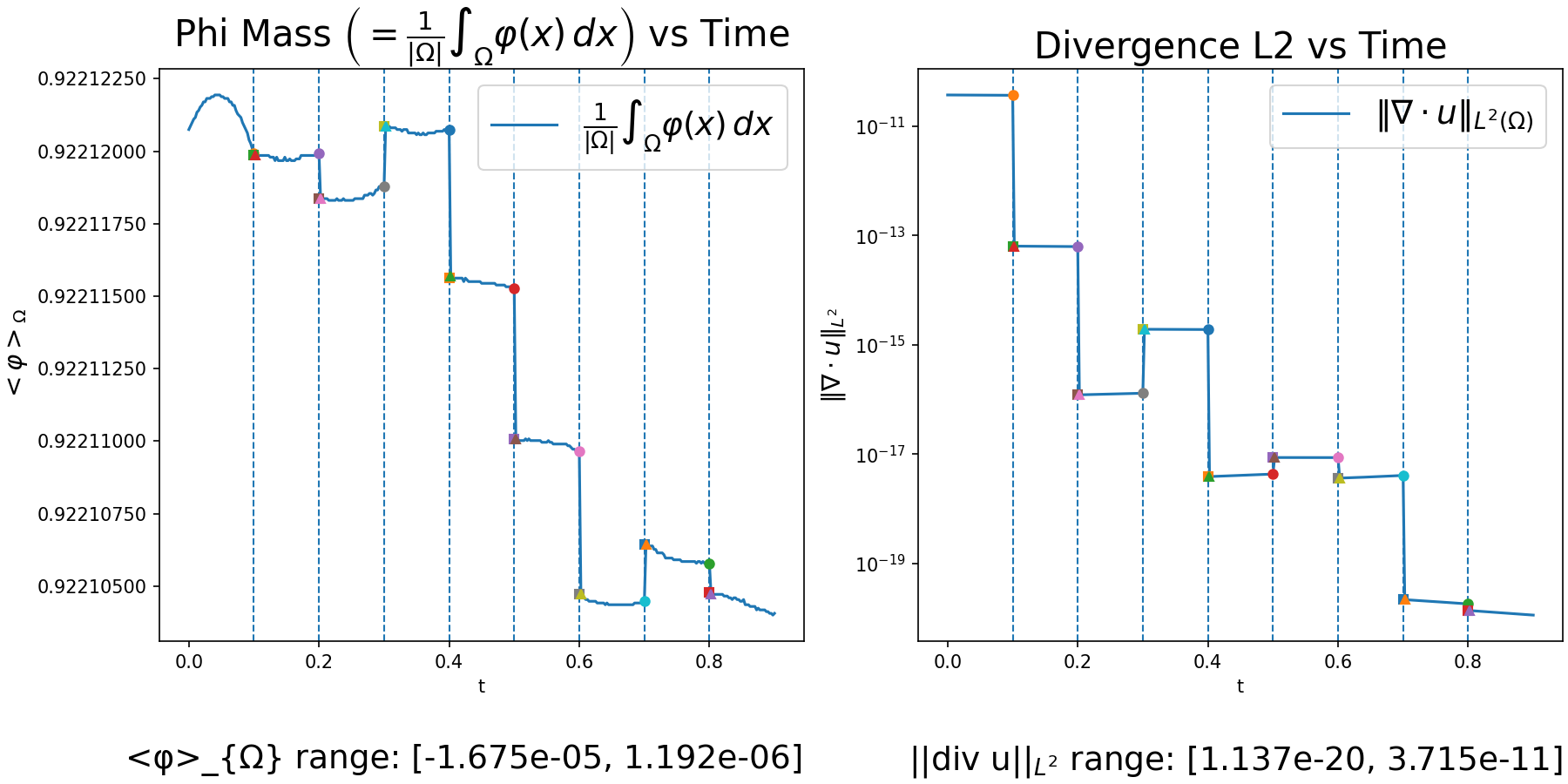}
      \caption{}
    \end{subfigure}
    \begin{subfigure}[t]{0.495\textwidth}
      \centering
      \includegraphics[width=\linewidth,trim=13 10 20 0]{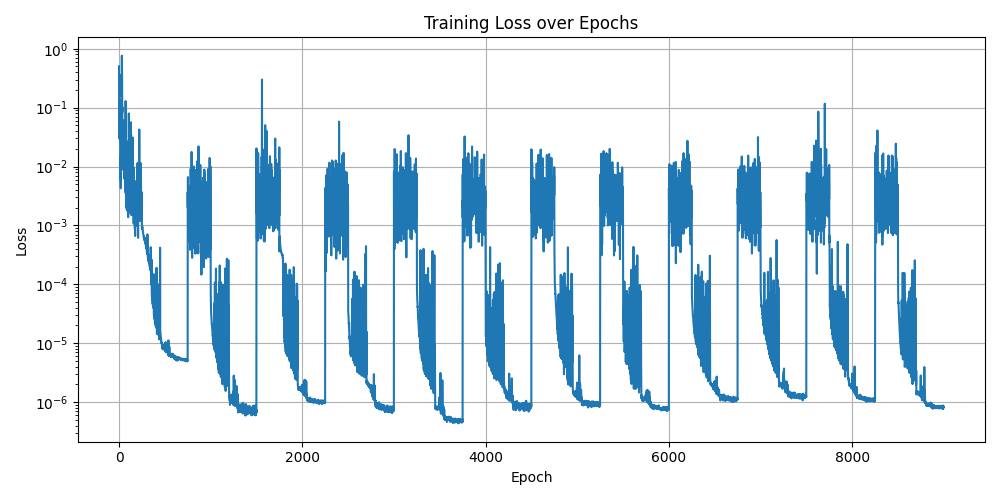}
      \caption{}
    \end{subfigure}
    \begin{subfigure}[t]{0.495\textwidth}
      \centering
      \includegraphics[width=\linewidth,trim=13 10 20 0]{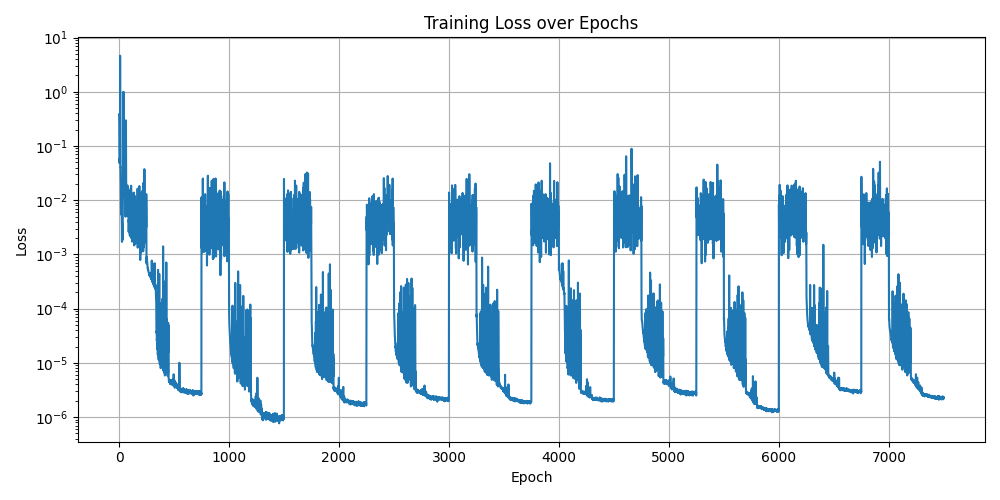}
      \caption{}
    \end{subfigure}
    \caption{Mass conservation error and divergence free condition error for each of unit train time intervals 0.5(a) and 1.0(b). The parameter setting comes from the Case $A$ in Table \ref{tab:cases-params}. We can check that the unit train tine interval Experiment of $\Delta_{unit} t = 0.05$ performs better than Experiment of $\Delta_{unit} t = 0.1$ case.}
    \label{D1008_2}
\end{figure}
\begin{figure}[H]
  \centering
    \begin{subfigure}[t]{0.49\textwidth}
      \centering
      \includegraphics[width=\linewidth,trim=35 10 5 20]{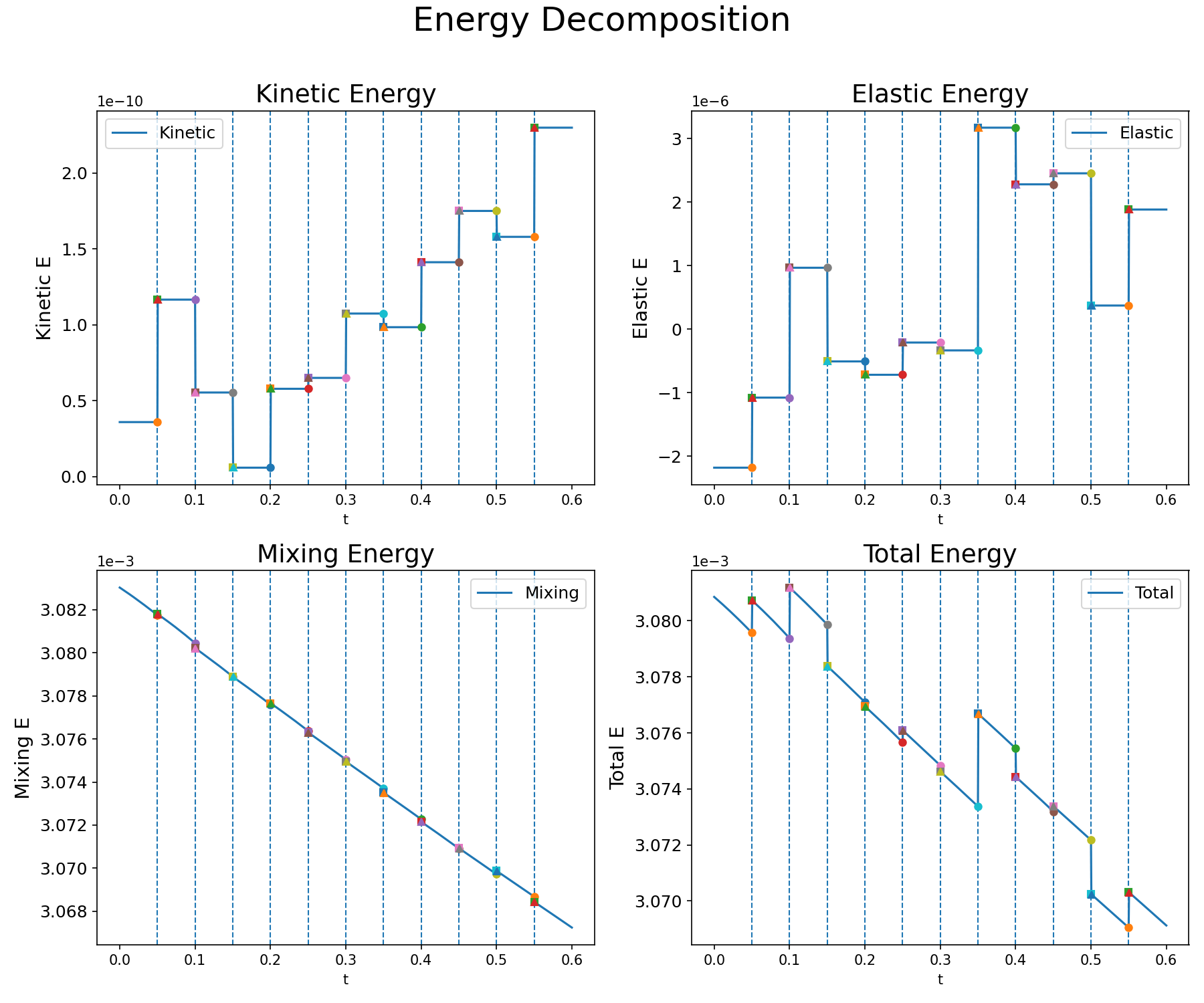}
      \caption{}
    \end{subfigure}
    \begin{subfigure}[t]{0.49\textwidth}
      \centering
      \includegraphics[width=\linewidth,trim=13 10 20 20]{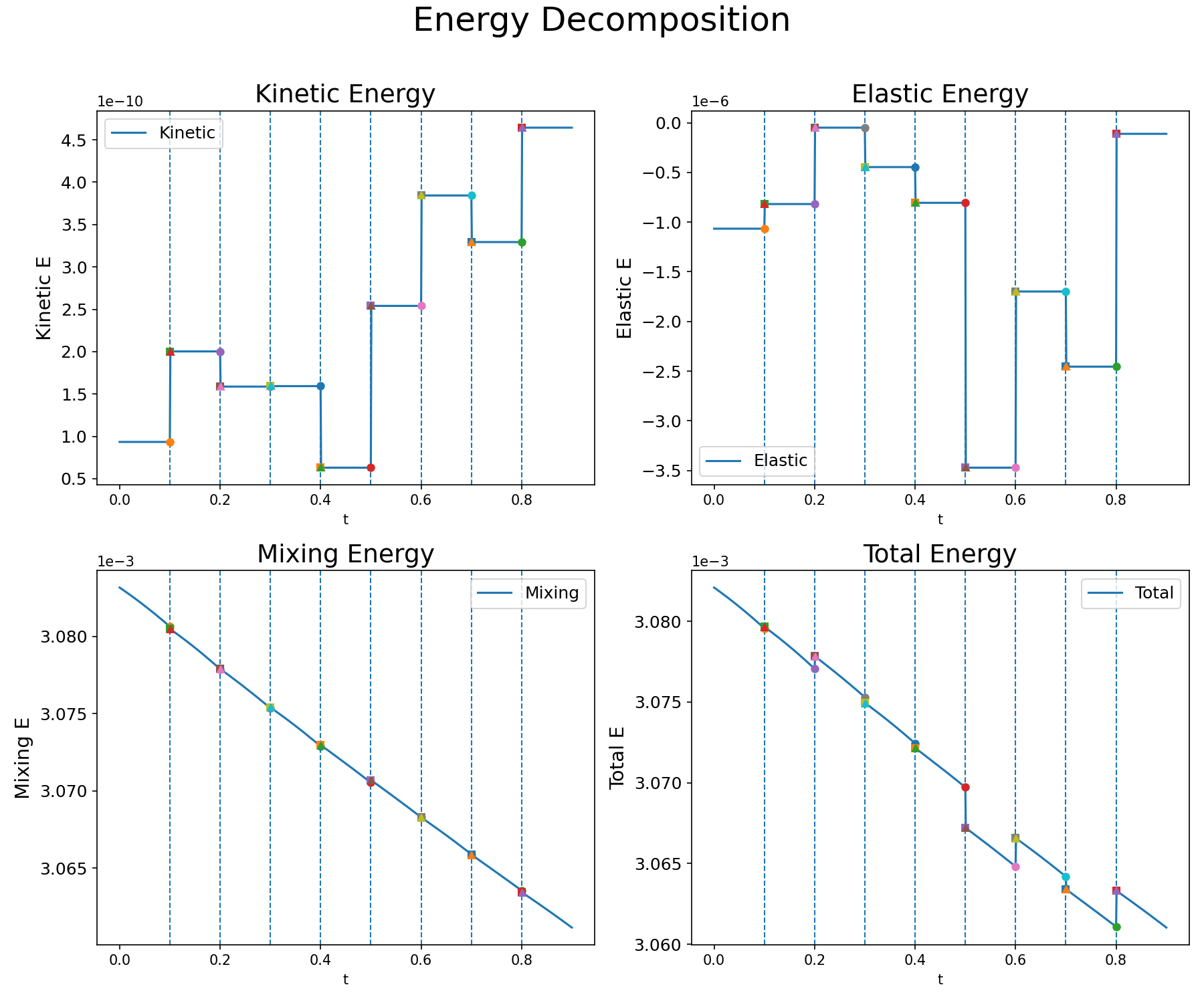}
      \caption{}
    \end{subfigure}
    \caption{Energy dissipation for each of unit train time intervals 0.05(a) and 0.1(b). The parameter setting comes from the Case $A$ in Table \ref{tab:cases-params}.}
    \label{D1008_e}
\end{figure}
We set the unit train time interval $\Delta_{unit} t$ as $0.05$ and $0.1$ for two experiments and compare the results for the benchmark errors as Figure \ref{D1008}. Since $\Delta_{unit} = 0.05 $ case is better than the other one in the Figure, we know that the experiments are performed in right way. Also, total energy \eqref{Total_Energy} keeps dissipative property for whole time domain as Figure \ref{D1008_e} under level of $10^{-5}$. 
\\
\par
Lastly in this case, in the Appendix \eqref{appendix:E_008}, we can check the energy dissipation of this these cases as we computed in \eqref{Total_Energy}.
\\
\par
\textbf{Case B. Diffusive thrombus cases}
In this case, by assigning proper parameters, we simulate diffusive thrombus as time goes on. For parameter setting, parameters are set as Case B and Case B' in the Table \ref{tab:cases-params}. These are intentionally set to increase permeability admitting the velocity change for blood and thrombus and increase the viscosity and viscoelasticity which also do not resist for the change of the variables and adhere to mix of the different values for corresponding variables streaming.
\begin{figure}[H]
  \centering
    \begin{subfigure}[t]{0.49\textwidth}
      \centering
      \includegraphics[width=\linewidth,trim=35 3 5 20]{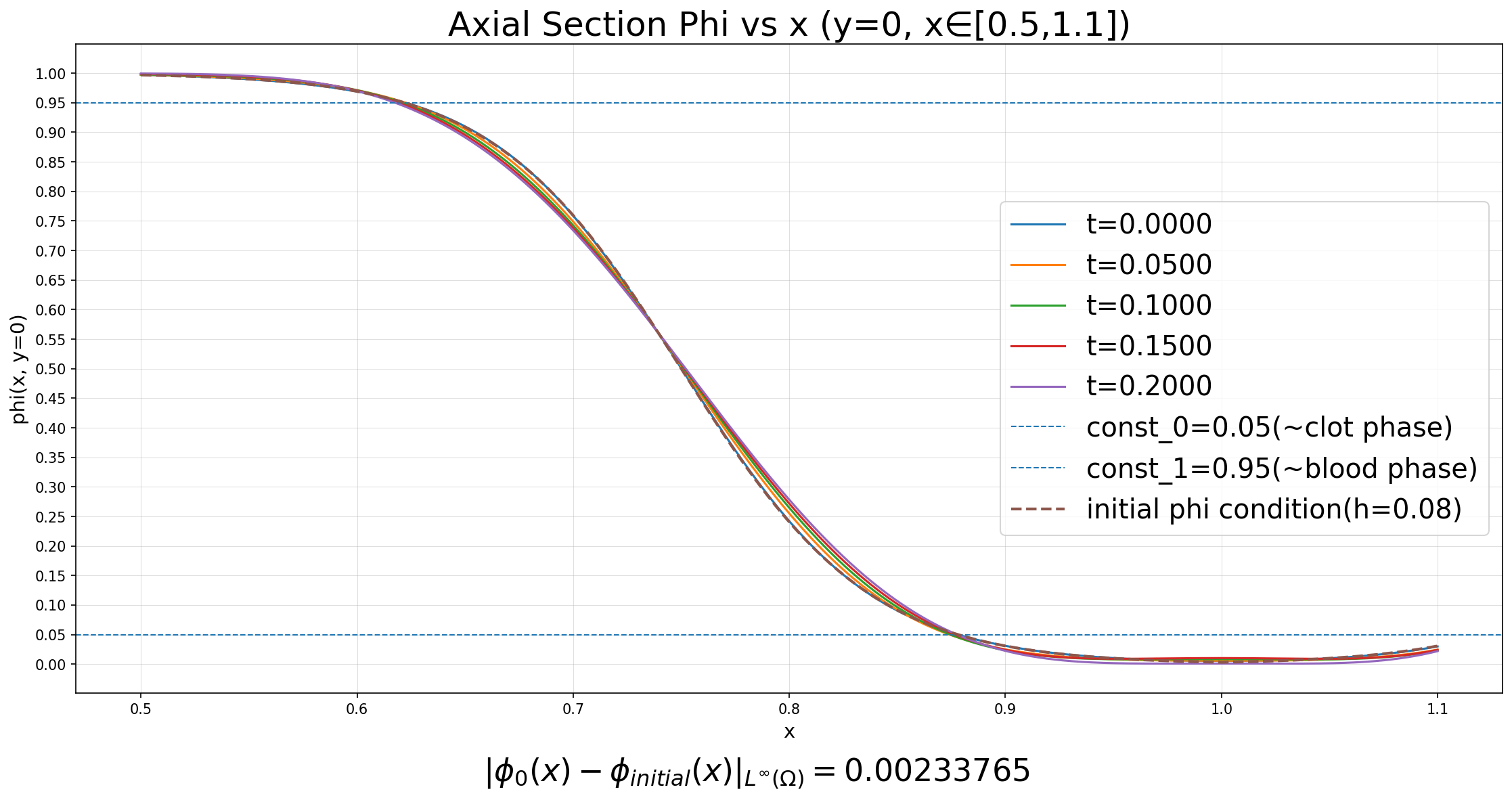}
      \caption{}
    \end{subfigure}
    \begin{subfigure}[t]{0.49\textwidth}
      \centering
      \includegraphics[width=\linewidth,trim=13 3 20 20]{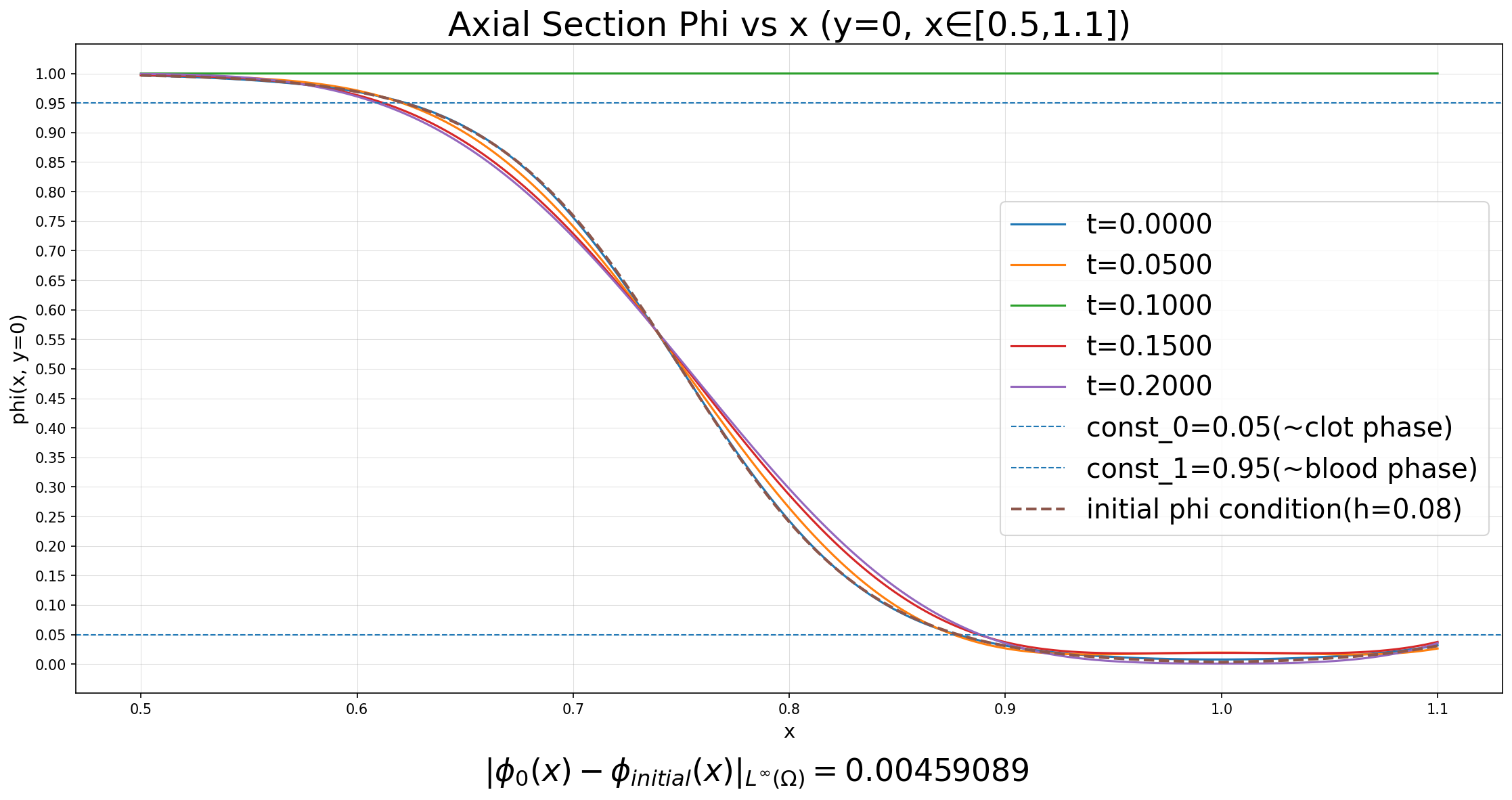}
      \caption{}
    \end{subfigure}
    \caption{Axial section figures of evolutionas (a) and (b). These (a) and (b) have parameter as in the Case $B'$ and Case $B$  in Table \ref{tab:cases-params} respectively. As in the more diffusive figure as (b), $\lambda$ of (b) is bigger than the one of (a) twice, while fixing the $\lambda \gamma$ in the mixed energy. $\lambda$ drives the dissipative energy to decrease $|\nabla \phi|^2$.}
    \label{fig:diffuse}
\end{figure}
On Figure \ref{fig:diffuse}, two cases of $B$ and $B^\prime$, different setting for $\lambda$ with fixed constant $\lambda \gamma$ of mixed energy term $\int_{\Omega} \lambda |\nabla \phi|^2 + 2\gamma \lambda f(\phi) dx$ have effect on extent of diffusion of the thrombus. This is because higher coefficient on $|\nabla \phi|^2$ dominate the dissipative total energy evolving as the main driving force on $\int_{\Omega} |\nabla \phi|^2 dx$. This energy term moves toward making lower $|\nabla \phi|$, which is equal to decreasing slope for phase field variable $\phi$ on mixture area of blood and thrombus.
\\
\par
Additionally, they showed also the total energy dissipation from the equation \eqref{2_27}. We was able to check this easily.
\\
\par
\textbf{Case C (E). Two thrombi cases}. These are the cases to observe thrombus gathering movement. To observe the clear movement of the two clots gathering, we adjust the $\lambda \times \gamma$ as more higher to make the double-well potential term in energy as main driving force. Since thrombus system total energy mainly moves from mixed energy on interfacial region, this adjusting makes sense. Also, this is because energy dissipative property on double-well potential term make the thrombus mixture polarize for the $\phi$ value to clot status $0$ or blood status $1$. As shown in Figure \ref{fig_sampling_E}, we observe that the clots gather toward the center more clearly in (b) than in (a). (a) shows more diffusive evolution on thrombus than (b) in the figure. Recall Figure \ref{clots_initial_phi_E} to see that the mid-point of the phase-field variable moves in polarized way in the plot(b) in Figure \ref{fig_sampling_E}, while remaining resistant to diffusion.
\\
\begin{figure}[H]
  \centering
    \begin{subfigure}[t]{0.38\textwidth}
      \centering
      \raisebox{0.3\height}{\includegraphics[width=\linewidth,trim=13 0 8 0]{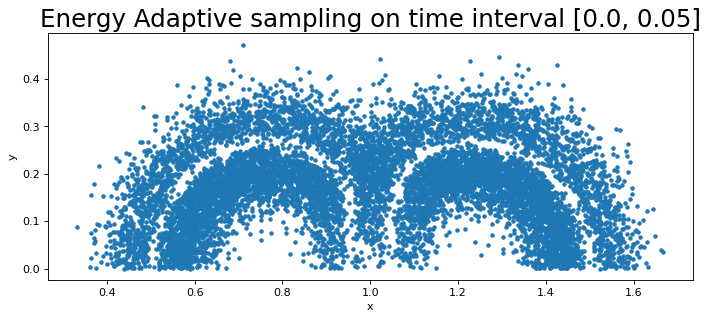}}
      \caption{}
    \end{subfigure}
    \begin{subfigure}[t]{0.6\textwidth}
      \centering
      \includegraphics[width=\linewidth,trim=8 0 20 0]{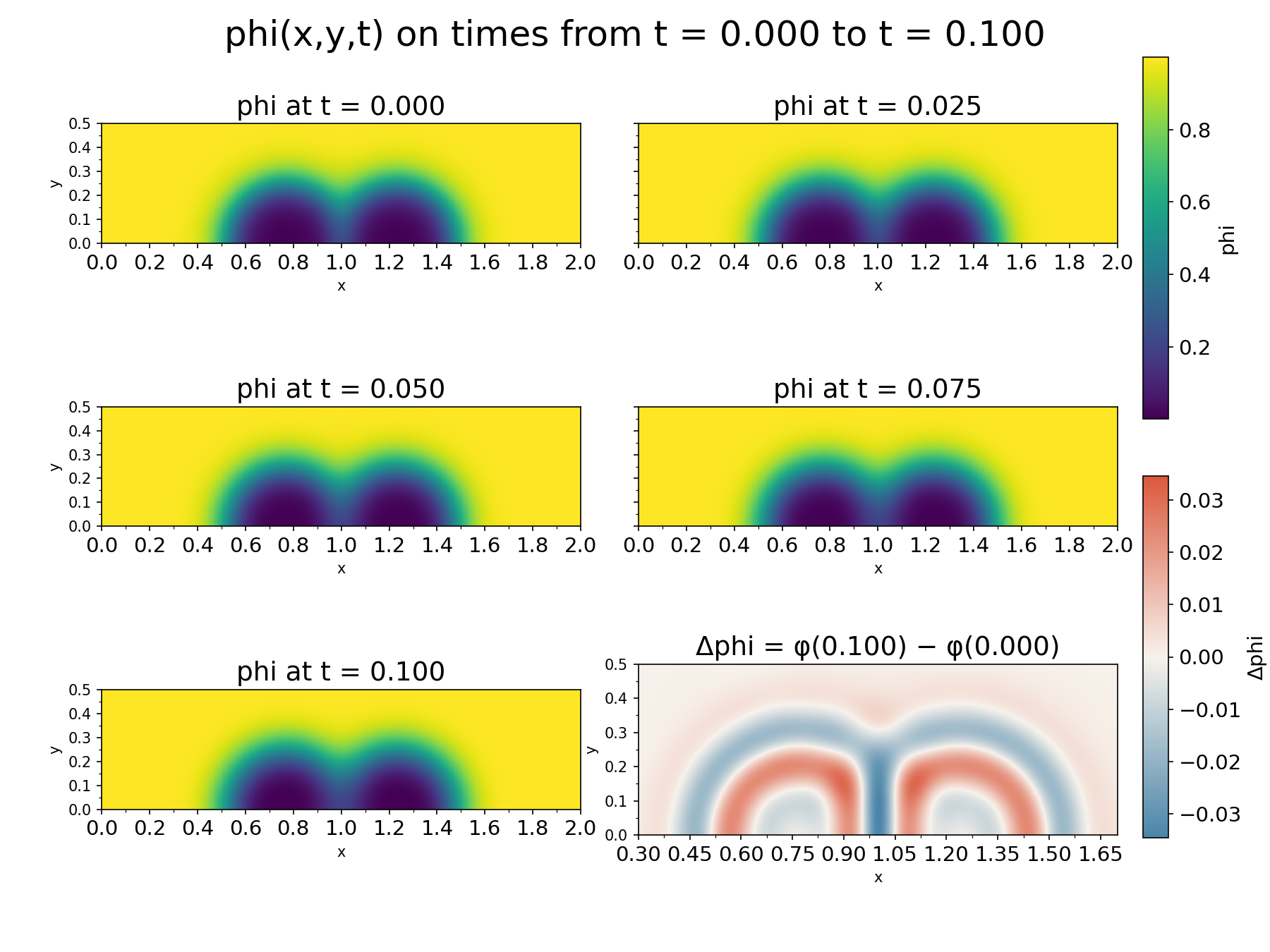}
      \caption{}
    \end{subfigure}
    \begin{subfigure}[t]{0.38\textwidth}
      \centering
      \raisebox{0.3\height}{\includegraphics[width=\linewidth,trim=13 0 8 0]{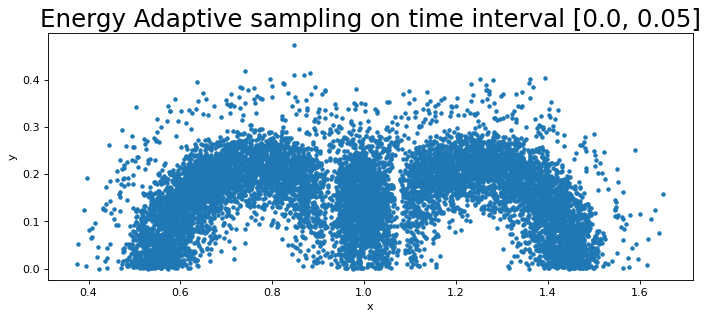}}
      \caption{}
    \end{subfigure}
    \begin{subfigure}[t]{0.6\textwidth}
      \centering
      \includegraphics[width=\linewidth,trim=8 0 20 0]{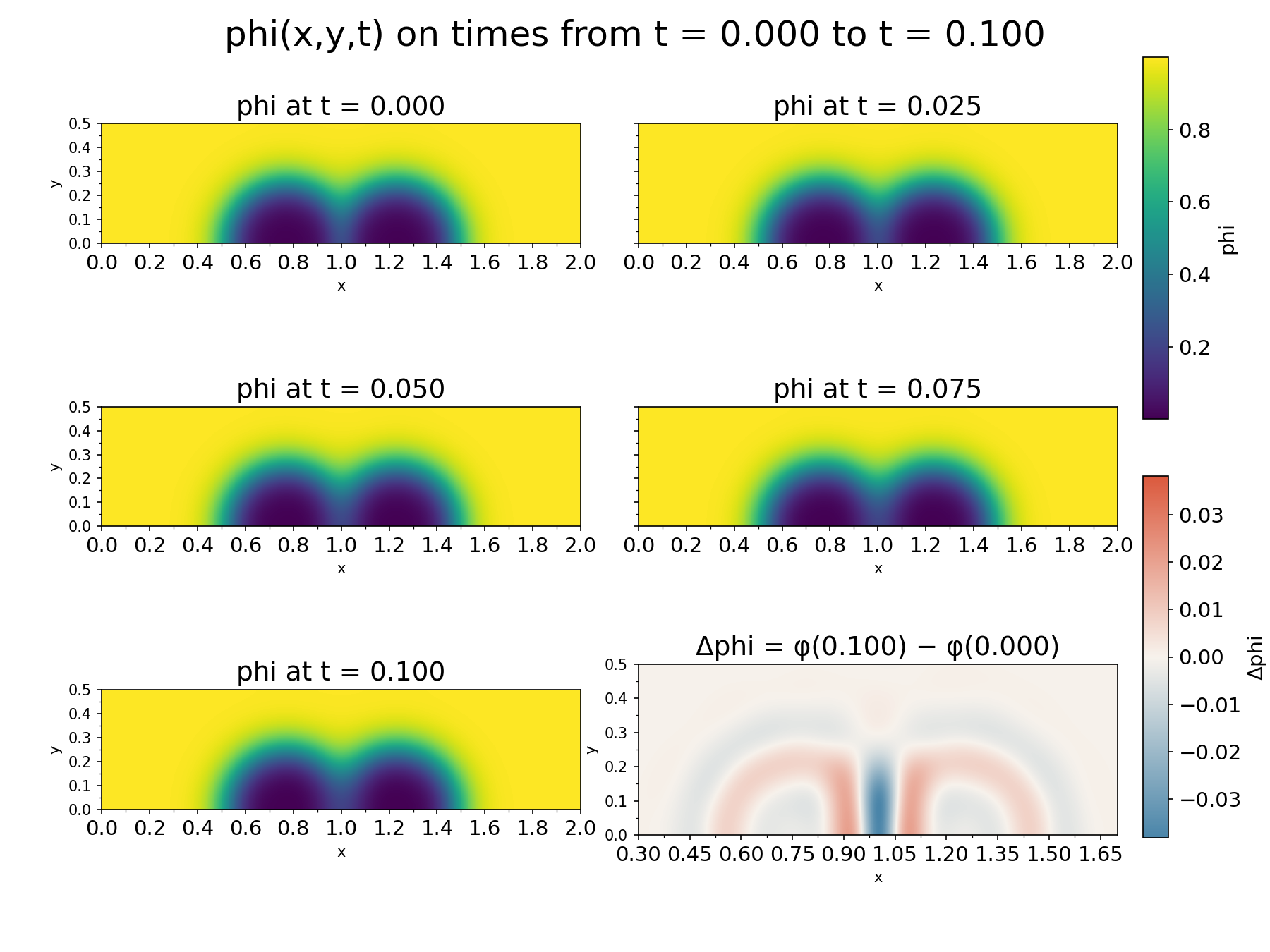}
      \caption{}
    \end{subfigure}
    \caption{Case $C'$ in (a) and (b) and Case $C$ in (c) and (d) in Table \ref{tab:cases-params}. First column plots are AA-PINN training points sampling and Second column plots are $\phi$ profile as time changes.
    Initial condition for $\phi$ profile is given as two thrombi as shown at $t = 0$ in phi profile (b) and (d). As in the more clear gathering of clots in (d) without boundary diffusive part which is shown in (b), $\lambda \gamma$ of (d) is bigger than the one of (b), while fixing the $\lambda $ in both cases. Hence, $\lambda \gamma$ drives the dissipative energy to decrease $|\phi(1-\phi)|^2$ more while separating phase-field.} 
    \label{fig_sampling_E}
\end{figure}
After evolving the Case $C$, as shown in Figure \ref{fig_clot_gathering}, we can see the total energy dissipation decreases to the level $10^{-4}$ for its change in the last training unit interval $\Delta_{unit} t = 0.1$. And, until $t = 0.5$, it shows that thrombi slowly overlap and gather to make them in one thrombus.
\begin{figure}[H]
  \centering
    \begin{subfigure}[t]{0.45\textwidth}
      \centering
      {\includegraphics[width=\linewidth,trim=13 0 8 0]{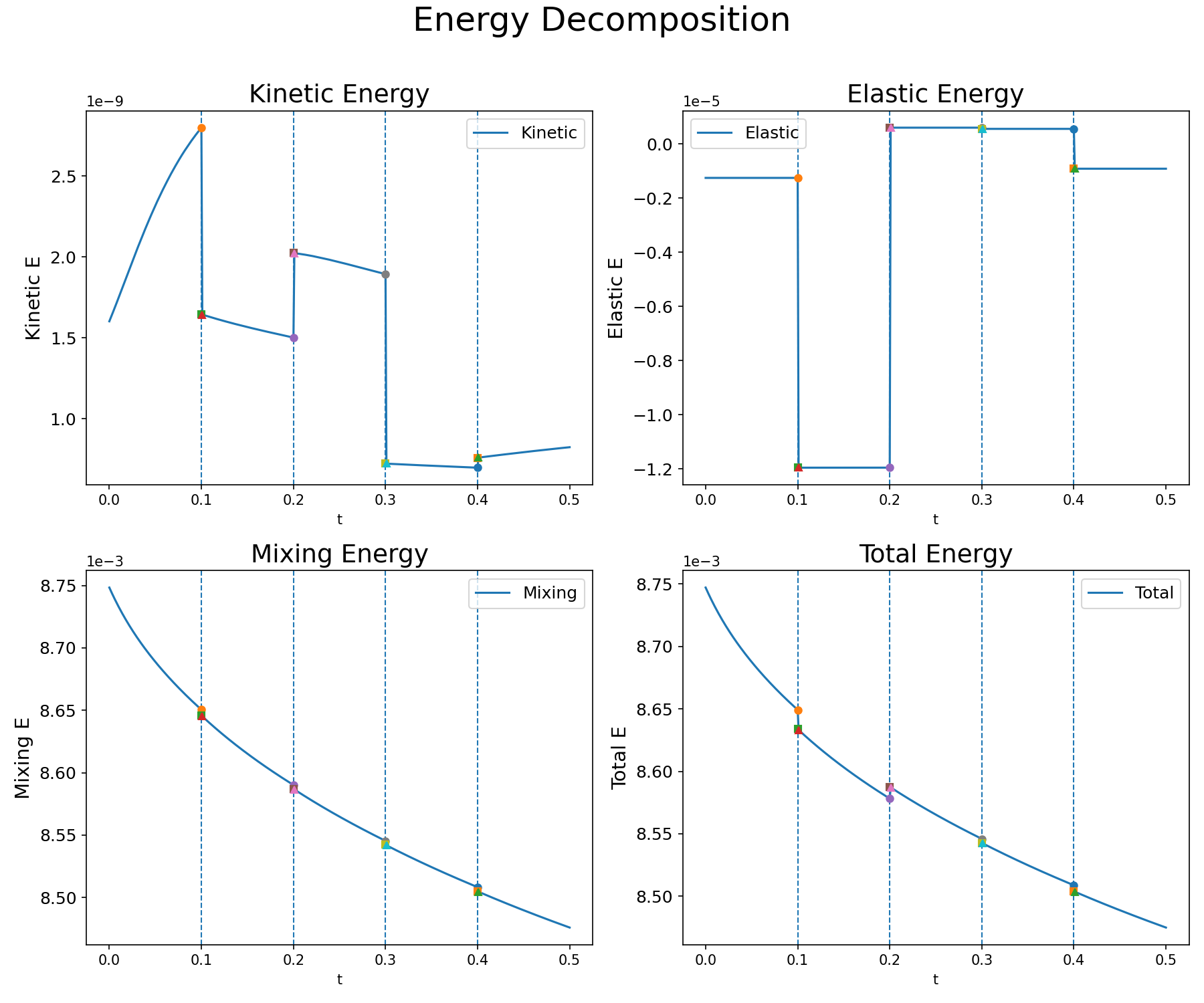}}
      \caption{}
    \end{subfigure}
    \begin{subfigure}[t]{0.5\textwidth}
      \centering
      \includegraphics[width=\linewidth,trim=8 0 20 0]{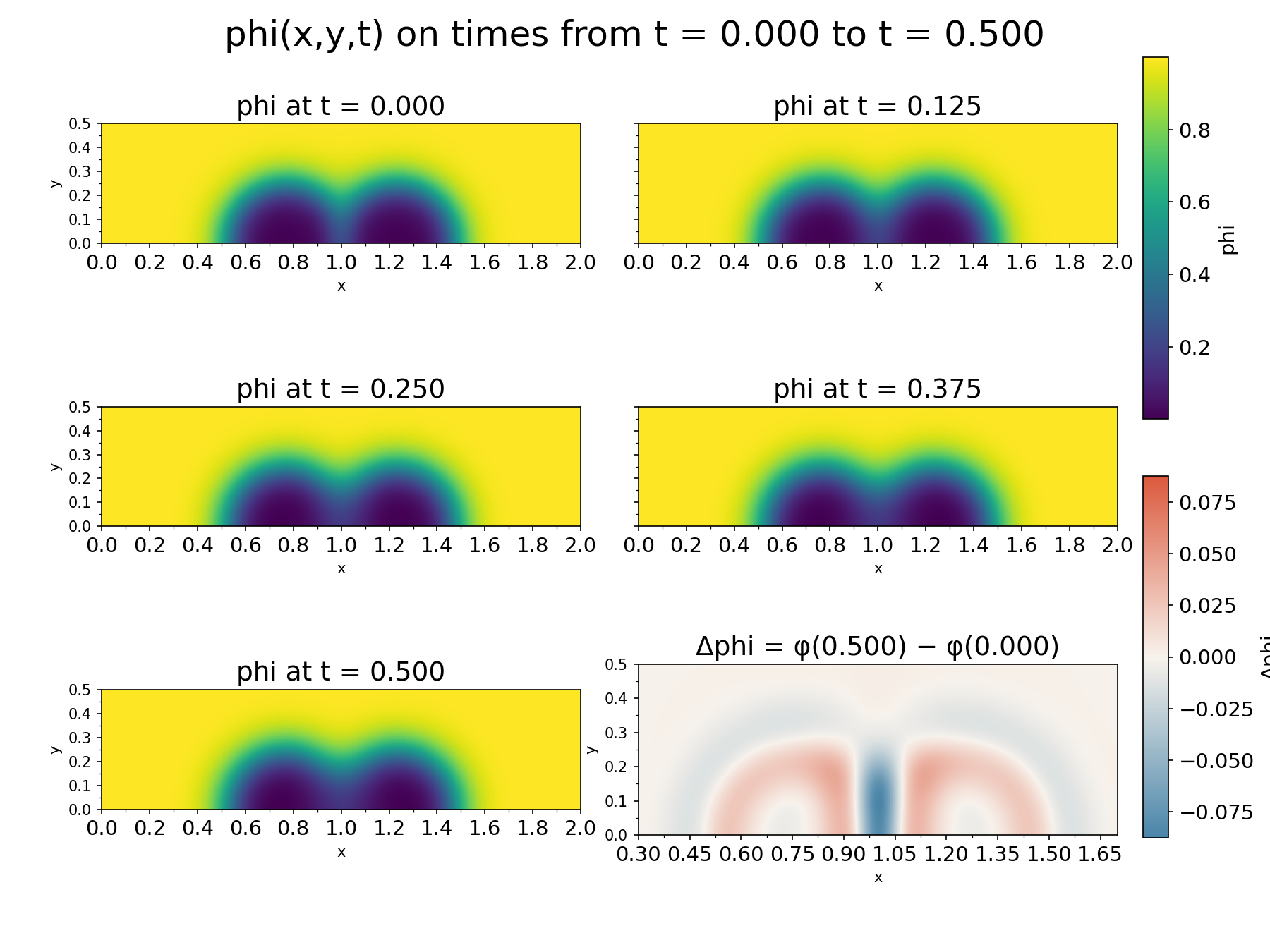}
      \caption{}
    \end{subfigure}
    \caption{Case $C'$ in (a) and (b) of Table \ref{tab:cases-params}.} 
    \label{fig_clot_gathering}
\end{figure}
\textbf{Case D (E). Thin interface cases} These are the cases for simulating thinner interface cases.
It is well known that the blood-thrombus interfacial region is difficult to simulate accurately because the phase-field variable develops very steep gradients there. In our governing system, numerical simulations face the same difficulty when the interface thickness $h$ is taken to be smaller. Therefore, we applied the AA-PINN sampling method to the case $h = 0.0035$, which reduced the absolute error relative to the initial profile $\phi \big|_{t = 0}$ from 0.0159 to 0.007109. This is $ 55.2893 \%$ decreasing and we can check this in Figure \ref{fig:e0035}.
\begin{figure}[H]
  \centering
    \begin{subfigure}[t]{0.49\textwidth}
      \centering
      \includegraphics[width=\linewidth,trim=35 3 5 20]{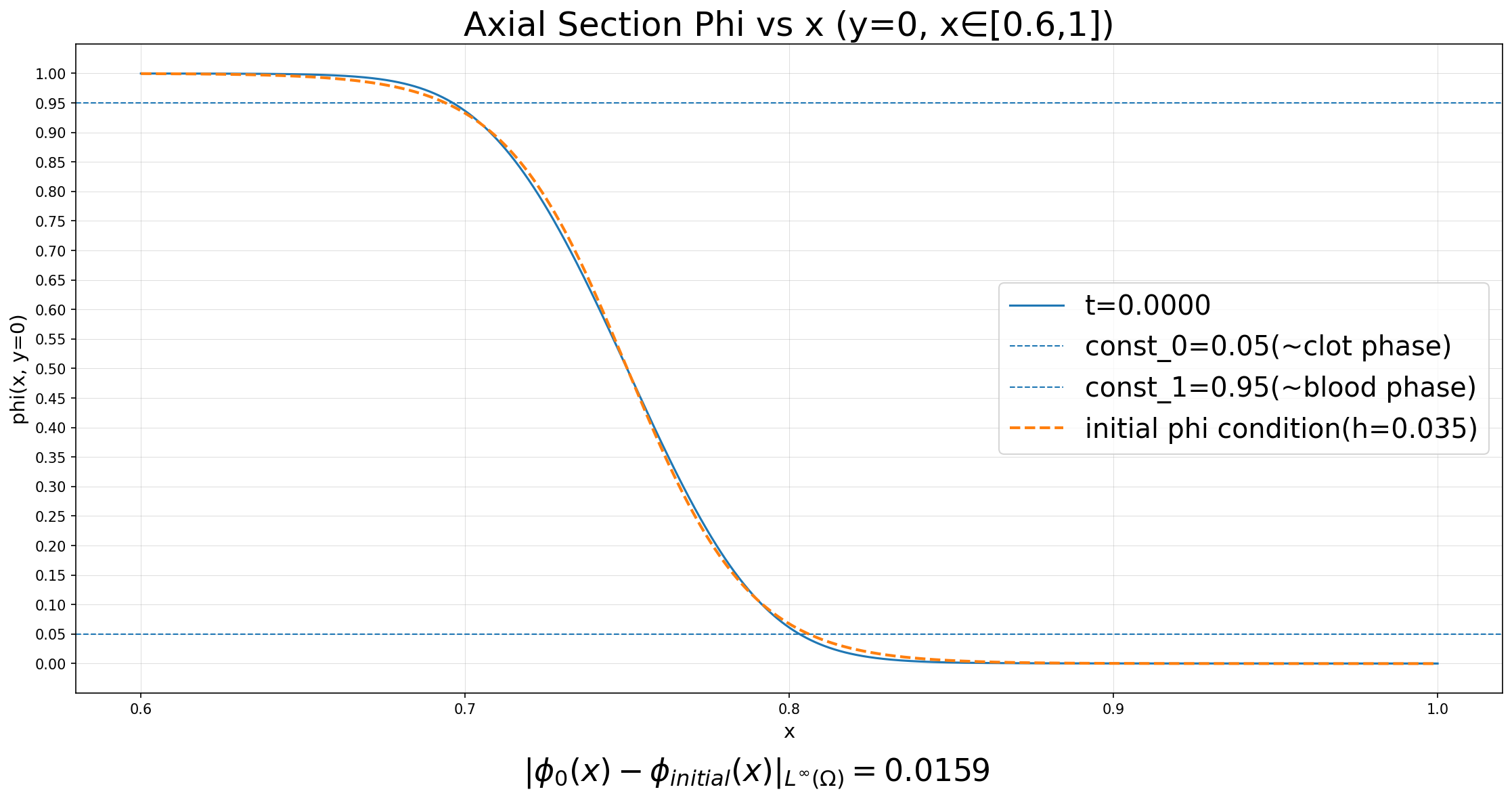}
      \caption{}
    \end{subfigure}
    \begin{subfigure}[t]{0.49\textwidth}
      \centering
      \includegraphics[width=\linewidth,trim=13 3 20 20]{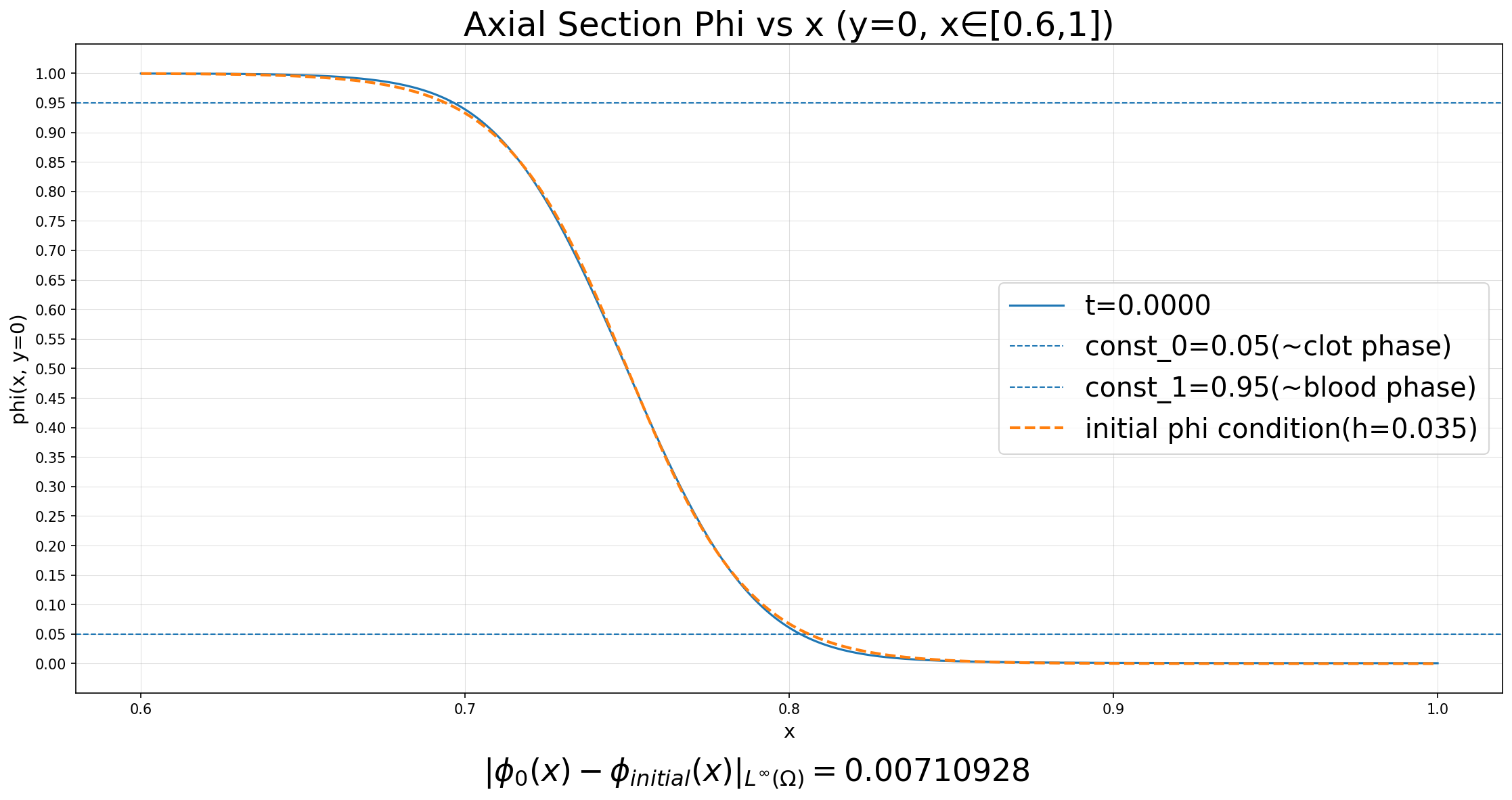}
      \caption{}
    \end{subfigure}
    \caption{Axial section figures of Case $D$ in Table \ref{tab:cases-params}. (a) does not use AA-PINN energy-adaptive sampling and (b) uses it.}
    \label{fig:e0035}
\end{figure}
Meanwhile, the thicker interface case $h = 0.05$ does not use the AA-PINN though it is more challenging case for simulation than $h=0.08$ Case A (Compare Figure \ref{fig:005} and Figure \ref{D1008}- \ref{D1008_2}). Axial section $L^{\infty}(\Omega)$ error for $\phi_0$ was 0.00227658 and thus this is slightly bigger than the 0.0021789 of (b) in Figure \ref{D1008}. Additionally, we can observe more detailed residual loss and benchmark results as follows to compare thinner interface Case $D'$ and baseline Case $A$ as Figure \ref{fig:005}.
\begin{figure}[H]
  \centering
    \begin{subfigure}[t]{0.49\textwidth}
      \centering
      \includegraphics[width=\linewidth,trim=35 3 0 20]{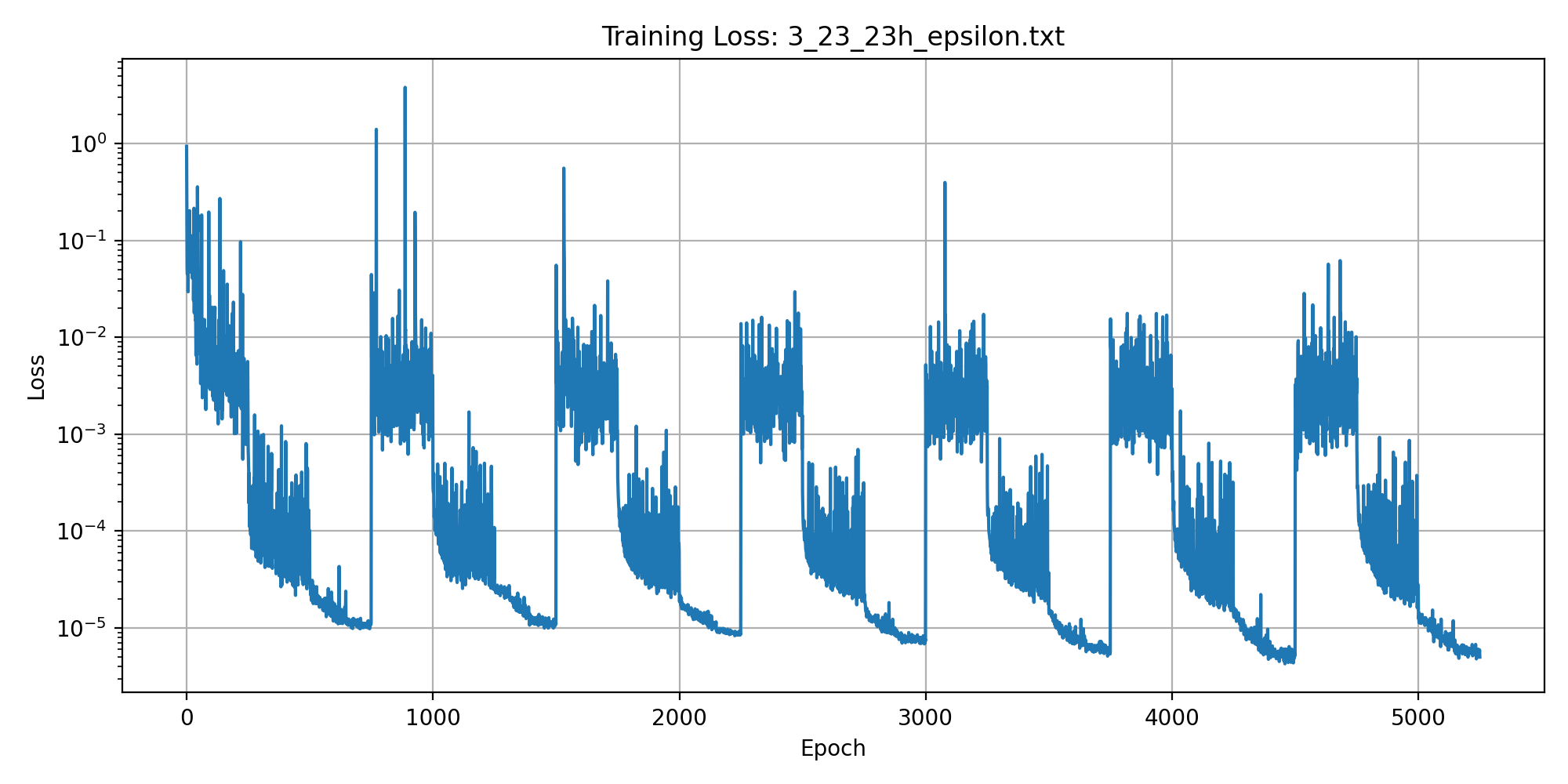}
      \caption{}
    \end{subfigure}
    \begin{subfigure}[t]{0.49\textwidth}
      \centering
      \includegraphics[width=\linewidth,trim=50 3 50 20]{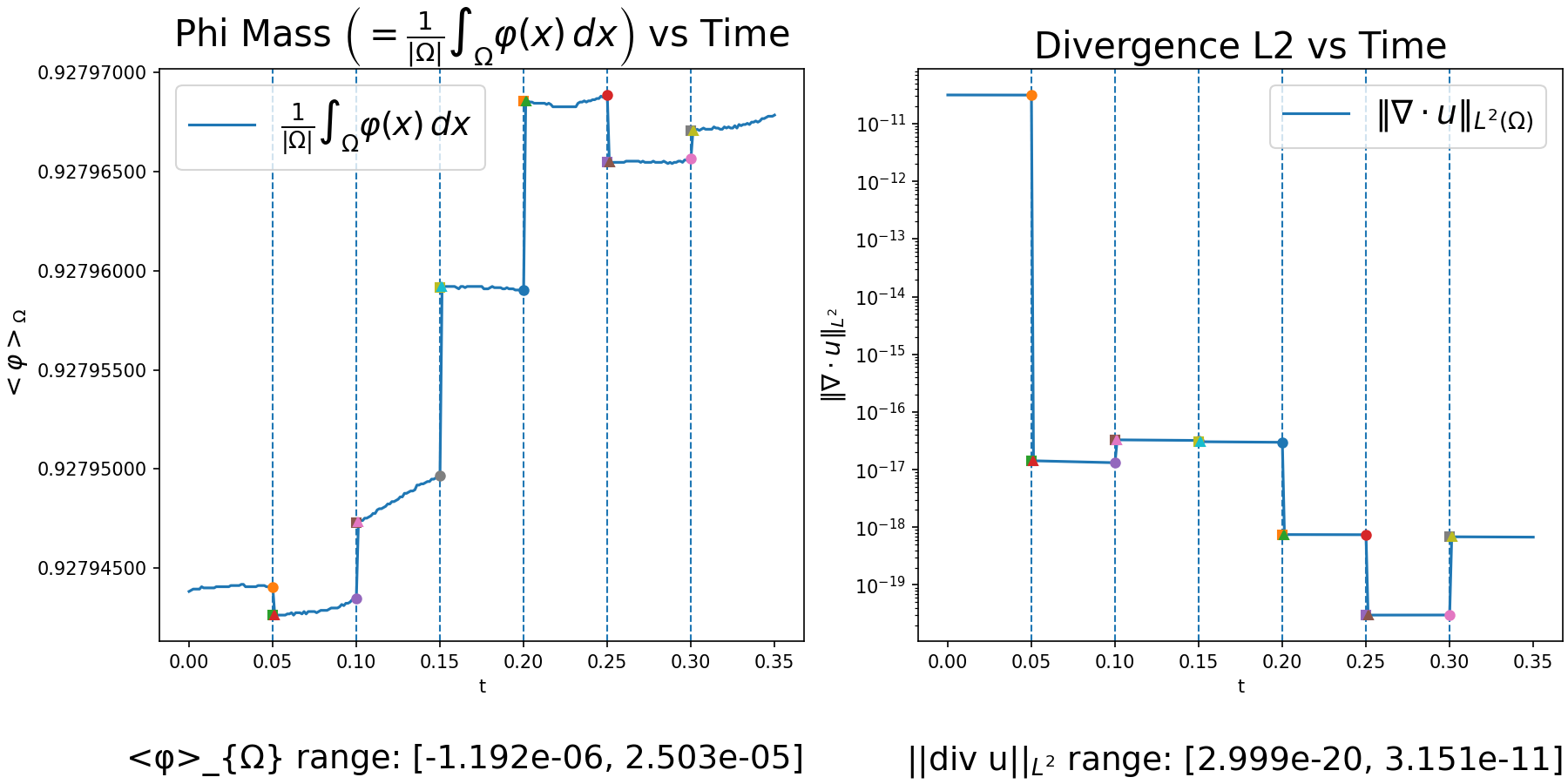}
      \caption{}
    \end{subfigure}
    \caption{$h = 0.05$ of Case $D'$ in Table \ref{tab:cases-params}. (a) is the residual loss graph and (b) is the mass conservation error and divergence free error. The loss and and errors are worse than $h = 0.08$ case in Figure \ref{D1008_2}. We observe that larger values of $h$ are numerically easier to simulate.}
    \label{fig:005}
\end{figure}
Since only interface thickness $h$ varies while the other parameters and initial configuration for $\phi_0$, $u_0$ and $F_0$ are the same as in Case A in the table, we can compare these two cases for $\phi_0$ graphs. Also, the smaller $h<0.08$ compared with case A gives more total energy via mixed energy term as we can compare in Figure \ref{D1008_e} and Appendix \ref{appendix:E_005}. This appendix also tracks the evolution of the energy over time.
\section{Conclusion}
In this paper, we studied a modified diffusion-enhanced NSCH–Oldroyd system motivated by thrombus modeling. For this modified system, we established a local well-posedness result and derived the associated energy-dissipation structure.
\\
\par
As a supplementary numerical component, we presented PINN-based numerical illustrations for representative thrombus cases. In particular, the window-sweeping training strategy together with the Metropolis–Hastings energy-adaptive sampling improved accuracy in challenging interfacial regimes.
\\
\par
Several directions remain for future work. In particular, the selection of physically calibrated model parameters and robust hyperparameter settings for the PINN framework requires further study. Applications to data-assimilation-based diagnostics also remain an interesting direction for future work.
\\
\par
\appendix
\section{PINN simulation parameter setting}
\subsection{Energy dissipation $E(x,t)$ of \eqref{Total_Energy} results for Case $A$}\label{appendix:E_008}
We can observe the energy dissipation as the Figure \ref{D1008_E}.
\begin{figure}[H]
  \centering
    \begin{subfigure}[t]{0.49\textwidth}
      \centering
      \includegraphics[width=\linewidth,trim=35 3 5 20]{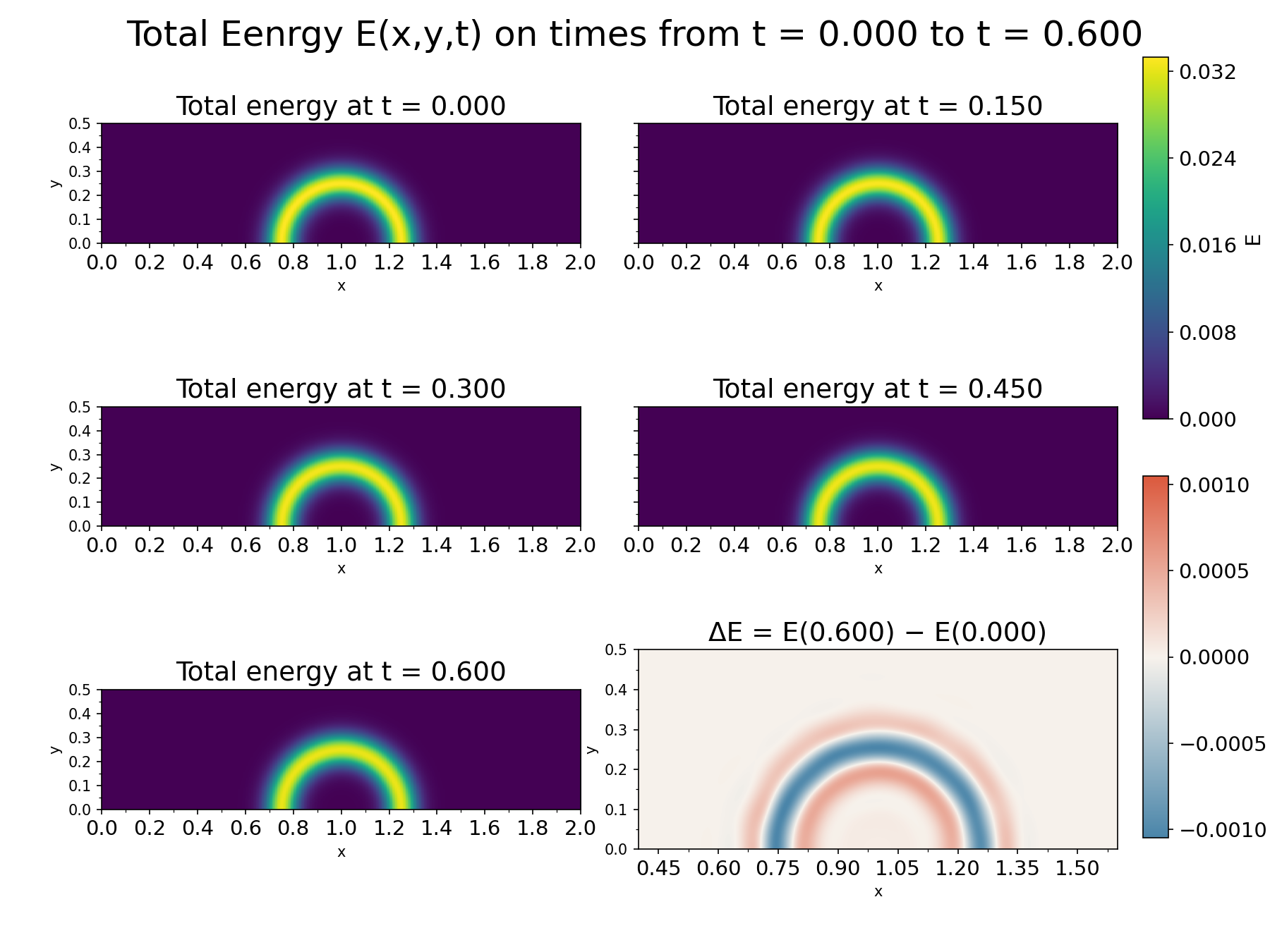}
      \caption{}
    \end{subfigure}
    \begin{subfigure}[t]{0.49\textwidth}
      \centering
      \includegraphics[width=\linewidth,trim=13 3 20 20]{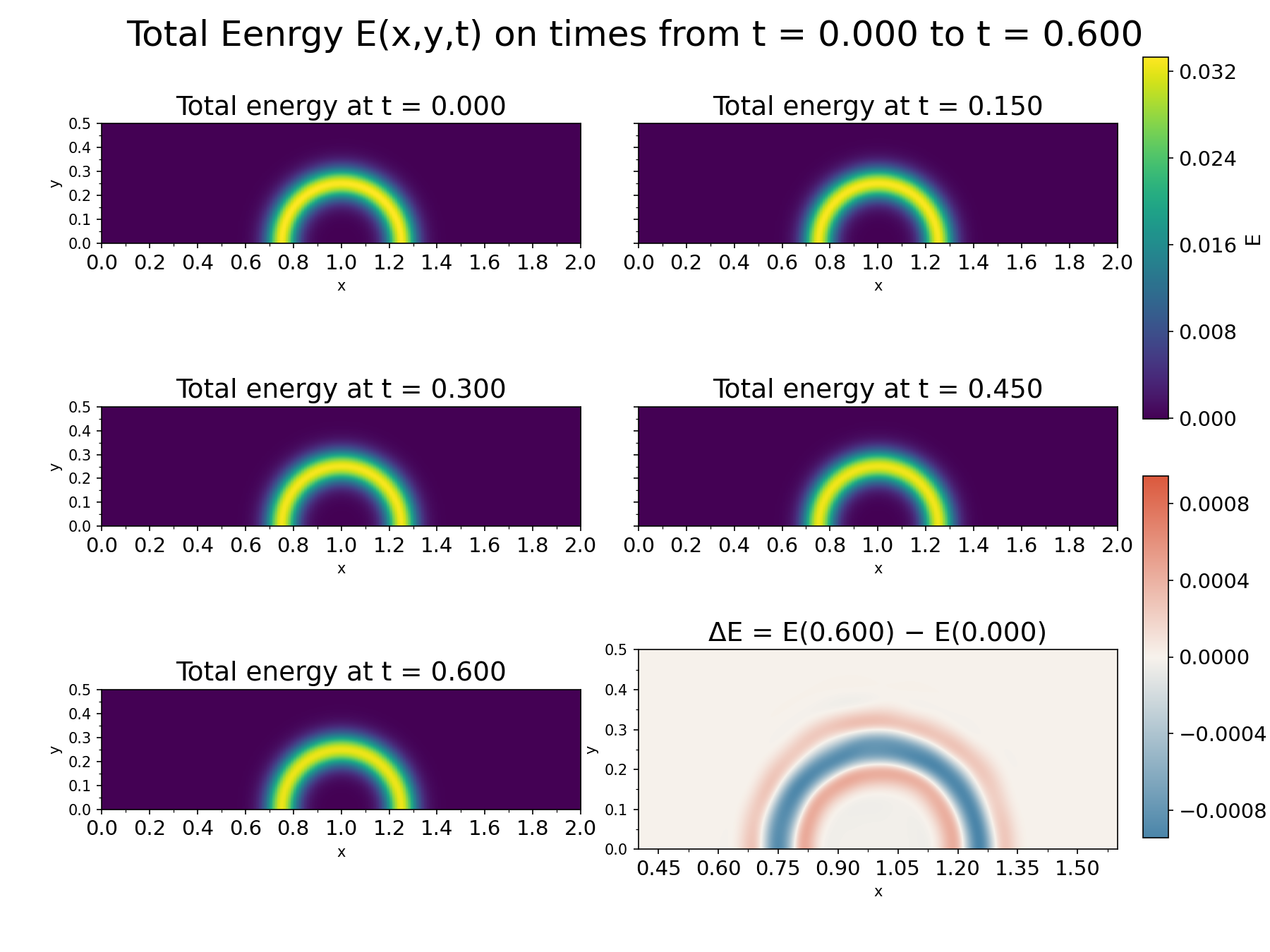}
      \caption{}
    \end{subfigure}
    \caption{Energy dissipation figures. These have parameter as in the Case $A$ in Table \ref{tab:cases-params} where (a) has $\Delta_{unit} t = 0.05$ and (b) has $\Delta_{unit} t = 0.1$.}
    \label{D1008_E}
\end{figure}

\subsection{Energy dissipation $E(x,t)$ of \eqref{Total_Energy} results for Case $D'$}\label{appendix:E_005}
As the previous appendix, there are the energy dissipation results as the Figure \ref{D1005_E}.
\begin{figure}[H]
  \centering
    \begin{subfigure}[t]{0.49\textwidth}
      \centering
      \includegraphics[width=\linewidth,trim=35 3 5 20]{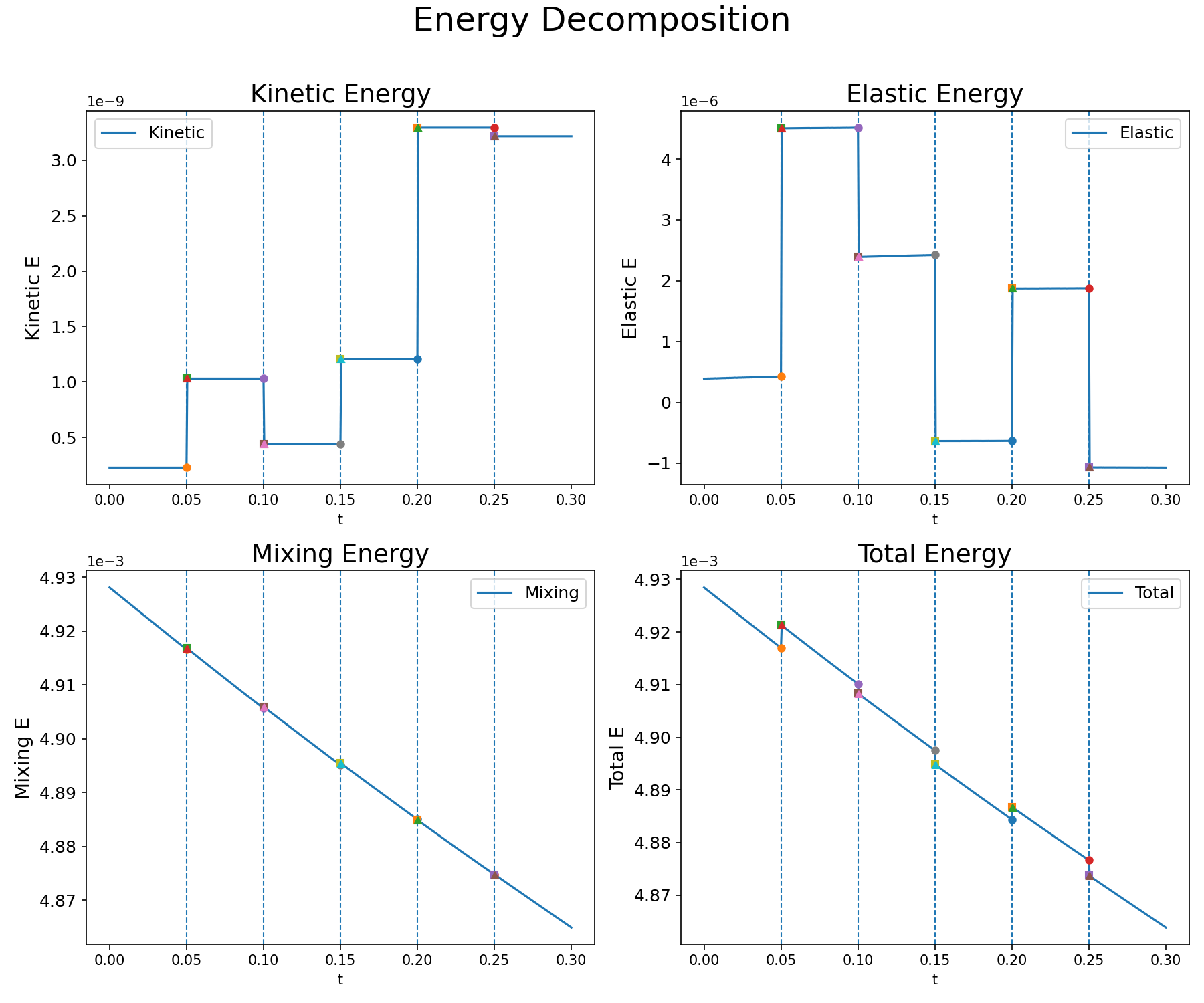}
      \caption{}
    \end{subfigure}
    \begin{subfigure}[t]{0.49\textwidth}
      \centering
      \includegraphics[width=\linewidth,trim=13 3 20 20]{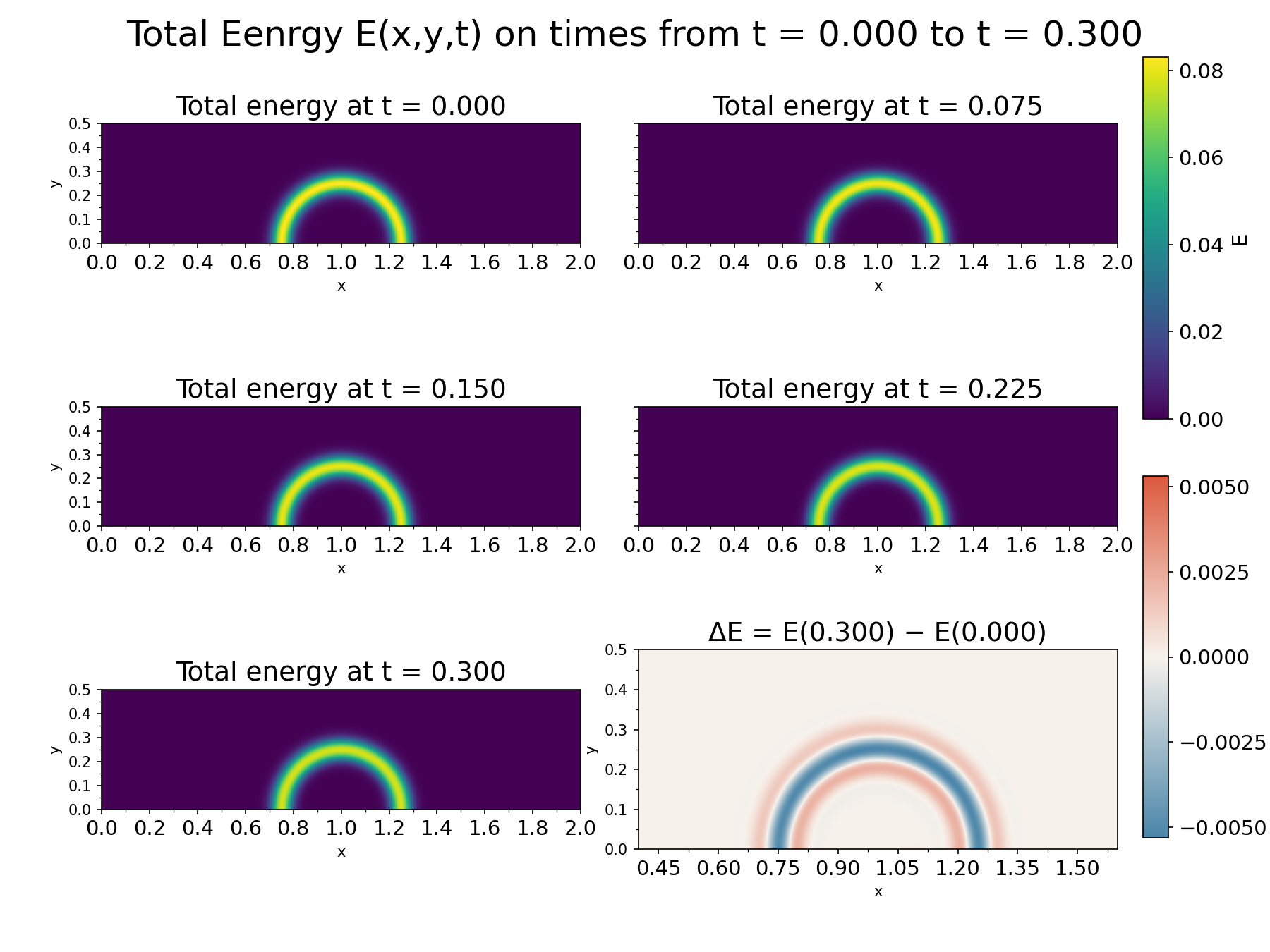}
      \caption{}
    \end{subfigure}
    \caption{Energy dissipation figures. These have parameter as in the Case $D'$ in Table \ref{tab:cases-params}.}
    \label{D1005_E}
\end{figure}

\end{document}